\newtheorem*{conj}{Conjecture}
\newtheorem*{problem}{Problem $\star$}
\newtheorem{definition}{Definition}[section]
\newtheorem{theorem}{Theorem}[section]
\newtheorem{lemma}{Lemma}[section]
\newtheorem{remark}{Remark}[section]
\newtheorem{prop}{Proposition}[section]
\newtheorem{coro}{Corollary}[section]
\newcommand{\real}{\mathbb{R}}
\newcommand{\nat}{\mathbb{N}}
\newcommand{\eps}{\epsilon}
\newcommand{\chrc}[1]{\mathrm{1}_{#1}}
\newcommand{\del}{\partial}
\title{A free boundary problem inspired by a conjecture of De Giorgi}
\date{}
\author{Nikola Kamburov}
\thanks{Supported in part by NSF grant DMS-1069225.}
\numberwithin{equation}{section}
\begin{document}

\begin{abstract}
We study global monotone solutions of the free boundary problem that
arises from minimizing the energy functional $I(u) = \int |\nabla
u|^2 + V(u)$, where $V(u)$ is the characteristic function of the
interval $(-1,1)$. This functional is a close relative of the scalar
Ginzburg-Landau functional $J(u) = \int |\nabla u|^2 + W(u)$, where
$W(u) = (1-u^2)^2/2$ is a standard double-well potential. According
to a famous conjecture of De Giorgi, global critical points of $J$
that are bounded and monotone in one direction have level sets that
are hyperplanes, at least up to dimension $8$. Recently, Del Pino,
Kowalczyk and Wei gave an intricate fixed-point-argument
construction of a counterexample in dimension $9$, whose level sets
``follow" the entire minimal non-planar graph, built by Bombieri, De
Giorgi and Giusti (BdGG). In this paper we turn to the free boundary
variant of the problem and we construct the analogous example; the
advantage here is that of geometric transparency as the interphase
$\{|u| < 1\}$ will be contained within a unit-width band around the
BdGG graph. Furthermore, we avoid the technicalities of Del Pino,
Kowalczyk and Wei's fixed-point argument by using barriers only.
\end{abstract}

\maketitle
\bibliographystyle{plain}
\tableofcontents

\section{Introduction.}
In this paper we study the following free boundary problem:
\begin{align}
    \Delta u & = 0 \quad \text{in}\quad \Omega_{\text{in}} := \{x\in \Omega: |u(x)|< 1\} \notag \\
    u & = \pm 1 \quad \text{in} \quad \Omega \setminus \Omega_{\text{in}} \label{FBP} \\
    |\nabla u| & = 1 \quad \text{on} \quad \del
\Omega_{\text{in}}\cap \Omega = F^+(u) \sqcup F^{-}(u) \notag
\end{align}
where $\Omega\subseteq \real^n$ is a domain and the free boundary of
$u$ consists of two pieces
\begin{equation*}
     F^{+}(u) : = \del \{u=1\}\cap\Omega \qquad  F^{-}(u) := \del \{u = -1\}\cap\Omega.
\end{equation*}
In particular, we are interested in global solutions
($\Omega=\real^n$), which are monotonically increasing in the last
coordinate $x_n$. We pose the following question:
\begin{problem}
Let $n=9$. Does there exist a global solution to \eqref{FBP},
monotonically increasing in $x_9$, such that its level sets are not
hyperplanes?
\end{problem}

The question above should be read in the context of the prominent
\emph{De Giorgi's conjecture} concerning global solutions of the
Allen-Cahn equation
\begin{equation}\label{AC}
    \Delta u = (1-u^2)u \quad \text{in}~\real^n,
\end{equation}
namely:
\begin{conj}[De Giorgi \cite{DeGio}]
If $u \in C^2(\real^n)$ is a global solution of \eqref{AC} such that
$|u|\leq 1$ and $\del_{x_n} u> 0$, then the level sets $\{u=\lambda
\}$ are hyperplanes, at least for dimensions $n\leq 8$.
\end{conj}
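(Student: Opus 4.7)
The plan is to reduce the conjecture to the classical Bernstein problem for minimal graphs via the Modica-Mortola-Sternberg $\Gamma$-convergence of Ginzburg-Landau energies to the perimeter functional, and to close the loop with a flatness-improvement scheme of the type pioneered by De Giorgi and, in the phase-transition setting, by Savin.

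First I would promote $u$ from a critical point to a local minimizer of $J(u) = \int |\nabla u|^2 + W(u)$. The hypothesis $\partial_{x_n} u > 0$ means that the translates $\{u(\cdot + t e_n)\}_{t\in\real}$ foliate $\real^n$ by solutions of \eqref{AC}; a sliding/calibration argument in the spirit of Alberti-Ambrosio-Cabr\'e then shows that $u$ minimizes $J$ on every bounded domain relative to its own boundary data. Second, I would blow $u$ down: rescale $u_R(x) := u(Rx)$ and use $\Gamma$-convergence to identify subsequential limits of the scaled level sets $\{u = \lambda\}/R$ as entire minimal hypersurfaces in $\real^n$. Monotonicity in $x_n$ passes to the limit and forces each blow-down to be the graph of a function over $\real^{n-1}$.

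For $n \leq 8$, i.e.\ $n-1 \leq 7$, the Bernstein theorem of Simons and Bombieri-De Giorgi-Giusti asserts that any entire minimal graph over $\real^{n-1}$ is a hyperplane, so every blow-down of $u$ is planar. The last step is to bootstrap this asymptotic flatness back into rigidity at every finite scale: set up an iterative improvement-of-flatness lemma asserting that sufficiently flat level sets of $u$ are strictly flatter at smaller scales, and conclude via iteration that the level sets of $u$ themselves are hyperplanes.

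The main obstacle, I expect, is precisely this improvement-of-flatness step. Executing it for the Allen-Cahn equation requires delicate compactness and barrier arguments, sharp Harnack-type estimates across the transition layer, and careful linearization around the one-dimensional heteroclinic profile, all uniform across scales. It is also the structural ceiling of the strategy: in $n = 9$ the Bombieri-De Giorgi-Giusti graph is non-planar, and the counterexample of Del Pino-Kowalczyk-Wei (together with the free-boundary analogue constructed in the present paper) shows that no such flatness gain can hold in higher dimensions.
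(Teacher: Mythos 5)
The statement you were asked about is a \emph{conjecture}, not a theorem of this paper, and the paper offers no proof of it; it is stated as background to motivate the construction of a counterexample (to the free boundary analogue) in dimension $9$. So there is no ``paper's own proof'' to compare against. As the introduction records, the conjecture is fully established only for $n=2$ (Ghoussoub--Gui) and $n=3$ (Ambrosio--Cabr\'e); Savin's result for $4\le n\le 8$ requires the \emph{additional} hypothesis
\begin{equation*}
  \lim_{x_n\to\pm\infty}u(x',x_n)=\pm 1,
\end{equation*}
which is not part of De Giorgi's statement. In full generality the conjecture remains open for $4\le n\le 8$, so no short proof along the lines you sketch can be correct as written.

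The concrete gap in your outline is the step where you assert that ``monotonicity in $x_n$ passes to the limit and forces each blow-down to be the graph of a function over $\real^{n-1}$.'' Monotonicity alone does not deliver this. Without the limit condition above, the blow-downs $u(Rx)$ may converge to a function that is identically $+1$ or $-1$, or to one depending on fewer variables, so that the $\Gamma$-limit set $E$ can be all of $\real^n$, empty, a half-space, or a slab; in those degenerate cases there is no entire minimal \emph{graph} over $\real^{n-1}$ to which Bernstein's theorem applies. Ensuring that $\partial E$ is nonempty and is genuinely a graph in the $e_n$-direction is exactly what the extra limit assumption buys, and it is the reason Savin must impose it. Moreover, even granting a planar blow-down, the improvement-of-flatness step you flag as ``the main obstacle'' is not a routine bootstrap: executing it for Allen--Cahn is the core of Savin's long and technical argument and cannot be treated as a black box. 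Your sketch is a reasonable summary of the known strategy under the extra hypothesis, but it is not a proof of the conjecture as stated, and it should not be presented as one.
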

The common nature of the PDE's \eqref{FBP} and \eqref{AC} is rooted
in the fact that they arise as Euler-Lagrange equations for the
closely related energy functionals $I$ and $J$, respectively
\begin{align}
I(u, \Omega) & = \int_{\Omega} |\nabla u|^2 + \mathcal{V}(u)
\quad \text{for} \quad u:\Omega \rightarrow [-1,1] \label{I} \\
J(u,\Omega) & = \int_{\Omega} |\nabla u|^2 +
\mathcal{W}(u),
\end{align}
where $\mathcal{V}(u) := \chrc{(-1,1)}(u)$ is a singular version of
the standard double-well potential $\mathcal{W}(u) :=
\frac{(1-u^2)^2}{2}$.

De Giorgi's conjecture has been motivated by a fascinating and deep
connection between the theory of semilinear elliptic PDE and the
theory of minimal surfaces. The connection was first rigorously
stated through the notion of $\Gamma$-convergence in the work of
Modica \cite{modica}. Assuming that $u$ minimizes $J$ in a large
ball $B_{1/\eps}$, $u_{\eps}(x) = u(x/\eps)$ minimizes the rescaled
energy
\begin{equation*}
    J_{\eps}(v, B_1) = \eps\int_{B_1} |\nabla v|^2 + \frac{1}{\eps}
    \int_{B_1}\mathcal{W}(v)
\end{equation*}
in the unit ball. What Modica proved was that as $\eps\to 0$, a
subsequence of minimizers $u_{\eps_k}$ of $J_{\eps_k}(\cdot, B_1)$
of uniformly bounded energy converges to
\begin{equation*}
    u_{\eps_k} \to \chrc{E} - \chrc{B_1\setminus E} \qquad \text{in}~
    L^1_{\text{loc}}(B_1),
\end{equation*}
where $E$ has a perimeter minimizing boundary in $B_1$, i.e. $\del
E$ is a minimal hypersurface. The convergence is in fact stronger:
Caffarelli and Cordoba (\cite{CafCordo1}, \cite{CafCordo2}) later
showed that the level sets $\{u_{\eps} = \lambda \}$ for
$-1<\lambda<1$ converge uniformly on compacts to the minimal
hypersurface $\del E$. Intuitively speaking therefore, the level
sets of a global minimizer of $J$ look like a minimal hypersurface
at large scales. The analogous statements can, of course, be made
for global minimizers of $I$.

The monotonicity assumption $\del_{x_n} u > 0$ in De Giorgi's
conjecture implies that $u$ is not only a stable critical point for
$J$, but that $u$ is, in fact, a global minimizer of $J$ in a
certain sense (see \cite{AlbAmbCabre}). Under the natural assumption
\begin{equation}\label{SavinAsmp}
    \lim_{x_n \to \pm \infty} u(x',x_n) = \pm 1.
\end{equation}
the level sets of $u$ are also graphs over $\real^{n-1}$ in the
$e_n$-direction. Therefore, the preceding discussion combined with
\emph{Bernstein's theorem}, which states that an entire minimal
graph in $\real^{n-1}\times \real$ is a hyperplane for $n\leq 8$
(cf. Simons \cite{Simons}), is what gives plausibility to De
Giorgi's conjecture. Moreover, the existence of a non-planar minimal
graph in dimension $n=9$, constructed by Bombieri, De Giorgi and
Giusti in \cite{BdGG}, strongly suggests that the conjecture is
likely to be false for $n\geq 9$.

There has been a lot of recent work which has almost completely
resolved De Giorgi's conjecture. It was fully established in
dimensions $n=2$ by Ghoussoub and Gui \cite{GhouGui} and $n=3$ by
Ambrosio and Cabr\'e \cite{AmbroCabre}, while Savin \cite{Savin}
managed to prove it for dimensions $2\leq n\leq 8$ under the
additional assumption \eqref{SavinAsmp}. Savin's approach has a
broader scope and applies to monotone global minimizers of the
functional $I$, as well (see \cite{SavinCDM}).



Recently Del Pino, Kowalczyk and Wei \cite{PKW} successfully
constructed a counterexample in dimension $n=9$. Roughly speaking,
their strategy is based upon the derivation of a sufficiently good
ansatz whose level sets ``follow" the Bombieri--De Giorgi--Giusti
(BdGG) minimal graph. This allows them to carry out an intricate
fixed point argument.

It was a desire to gain a better understanding of precisely what
geometric ingredients are responsible for the existence of this
important counterexample that led us to formulate and resolve in the
affirmative the alternative Problem $\star$.
\begin{theorem}\label{theBread}
There exists a solution $u:\real^9\to \real$ of \eqref{FBP} which is
monotonically increasing in $x_9$ and whose free boundary $F(u) =
F^+(u) \sqcup F^-(u)$ consists of two non-planar smooth graphs.
\end{theorem}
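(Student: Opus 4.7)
The plan is to produce the solution $u$ by Perron's method, sandwiching it between a monotone supersolution $\overline{u}$ and a monotone subsolution $\underline{u}$ whose interphases $\{|u|<1\}$ are contained in thin tubular neighborhoods of the BdGG graph $\Gamma = \{x_9 = F(x')\}\subset \real^9$. The motivation comes from the flat analog: if $\Gamma$ were a hyperplane with signed distance $d$, the function $u_0(x) = d(x)$ on $\{|d|<1/2\}$ (extended by $\pm 1$ elsewhere) would be an exact solution of \eqref{FBP}. On the true BdGG graph this ansatz fails because the parallel surfaces $\Gamma_{\pm 1/2}$ carry mean curvature of order $|A_\Gamma|^2$, even though $\Gamma$ itself is minimal; the role of the barriers is to encode the corrections that absorb this error.

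To build the barriers I would parametrize candidate free boundaries as $F^\pm = \{d = \pm 1/2 \pm \phi^\pm(y)\}$, where $y\in \Gamma$ is the nearest-point projection and $\phi^\pm$ are small smooth functions on $\Gamma$. Between $F^+$ and $F^-$ take $u$ to be the harmonic function with boundary values $\pm 1$. A computation in Fermi coordinates around $\Gamma$ shows that, to leading order, the residuals $|\nabla u|^2 - 1$ on $F^\pm$ are driven by the Jacobi operator $J_\Gamma\phi := \Delta_\Gamma\phi + |A_\Gamma|^2\phi$ applied to a suitable linear combination of $\phi^\pm$, plus an inhomogeneous term of size $O(|A_\Gamma|^2)$. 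The decisive geometric input is the existence of a \emph{positive} Jacobi field on $\Gamma$: because $\Gamma$ is monotone graphical in the $x_9$-direction, $w(y) := \nu(y)\cdot e_9 > 0$ satisfies $J_\Gamma w = 0$. Comparing the inhomogeneous term to multiples of $w$ then produces ordered sub- and supersolutions of the linearized problem, and unwinding the correspondence yields concrete choices of $\phi^\pm$ that flip the sign of $|\nabla u|^2 - 1$ on $F^\pm$ in the directions demanded of a sub- or supersolution of \eqref{FBP}.

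With $\underline{u} \leq \overline{u}$ in hand, both monotone in $x_9$ by construction, I would define $u$ as the pointwise supremum over all monotone subsolutions $v$ with $\underline{u} \leq v \leq \overline{u}$, and verify via the usual arguments that $u$ is a viscosity solution of \eqref{FBP}, with monotonicity in $x_9$ inherited from the supremum family. Since the interphase of $u$ sits inside the interphases of the barriers, it lies in a bounded neighborhood of the smooth surfaces $\Gamma_{\pm 1/2}$, giving uniform flatness of $F^\pm(u)$ at unit scale at every point. Invoking De Silva-type flatness-implies-$C^{1,\alpha}$ regularity, followed by Schauder bootstrapping, then upgrades each piece $F^\pm(u)$ to a smooth graph, and these graphs are forced to be non-planar because they are quantitatively close to the non-planar $\Gamma_{\pm 1/2}$.

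The chief obstacle is the barrier construction itself: the sign of $|\nabla u|^2 - 1$ must be controlled \emph{simultaneously} on both free boundaries $F^\pm$ under a single perturbation, and $\phi^\pm$ must remain small and globally defined on the non-compact surface $\Gamma$. The positive Jacobi field $w$, coming directly from the monotone graphical character of the BdGG surface, is precisely what replaces the infinite-dimensional fixed-point scheme of Del Pino--Kowalczyk--Wei: rather than solving for $\phi^\pm$ exactly, one only needs comparison against scalar multiples of $w$. The known decay of $|A_\Gamma|$ along the BdGG graph at infinity is what ensures that the admissible corrections $\phi^\pm$ can be taken so small that the barrier interphases remain within a unit-width tubular neighborhood of $\Gamma$.
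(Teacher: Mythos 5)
The barrier construction --- the crux of the problem --- is where your sketch breaks down, and the mechanism you propose cannot work by itself. You plan to compare the inhomogeneous error against scalar multiples of the positive Jacobi field $w(y) = \nu(y)\cdot e_9$, but $J_{\Gamma}w = 0$ exactly, so $w$ is not a \emph{strict} supersolution and absorbs no error. What is needed is a strict positive supersolution $h$ with $J_{\Gamma}h$ dominating the residual. Moreover, after the cancellations built into the ansatz, the surviving residual is not $O(|A_\Gamma|^2) = O(r^{-2})$ as your sketch implies, but $z^2 H_3$ with $H_3 = \sum_i k_i^3 = O(r^{-3})$; and absorbing a genuine $O(r^{-3})$ term with a positive supersolution of size $O(r^{-1})$ is in direct tension with the maximum principle that $w$ itself furnishes. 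The paper escapes this by exploiting a special feature of the BdGG geometry that your outline never mentions: $H_3$ \emph{vanishes on the Simons cone} $\{\theta = \pi/4\}$, so the residual is really $O\big(|\theta - \pi/4|\, r^{-3}\big)$; even then, producing the corresponding ``Type 2'' supersolution (Lemma~\ref{h"_infty}, Proposition~\ref{h"}) requires a delicate patching of barriers near and away from the cone, using the asymptotic proximity of $\Gamma$ to the explicit model graph $\Gamma_\infty$ of \cite{PKW}, together with the rescaling by $\alpha$ that makes the asymptotic estimates effective globally. Without this step the construction does not close.

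Secondarily, the solution-generating step in the paper is not a Perron supremum: $u$ is obtained as a constrained \emph{energy minimizer} of $I$ between $V$ and $W$, monotonized by rearrangement, precisely so that the regularity machinery of De Silva \cite{DS} and De Silva--Jerison \cite{DSJ} for one-phase minimizers (non-degeneracy, NTA positive phase, boundary Harnack, comparison with vertical translates, and only then Caffarelli's flatness theorem) applies directly. A Perron supremum over subsolutions is not obviously an energy minimizer, nor obviously non-degenerate along its free boundary, so you would need to supply a separate argument to launch the regularity theory; the ``uniform flatness at unit scale'' you invoke also does not follow immediately from the trapping alone.
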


The study of the free boundary problem has an obvious geometric
advantage. In this setting, the interphase $\{|u|<1\}$ will be
contained within a unit-width band around the BdGG minimal graph, so
that ones does not have to worry about capturing a non-trivial
behaviour of the solution away from the band. We will use the method
of barriers which is elementary in nature and allows for a
transparent and relatively precise description of the solution (we
will be able to trap the solution quite tightly between the two
barriers). This way we avoid using fixed point arguments which are
arguably the main culprit for the level of technical complexity of
the construction by Del Pino, Kowalczyk and Wei.

To construct a solution to \eqref{FBP} once we are in possession of
a supersolution lying above a subsolution (we will define these
notions shortly), we adopt the strategy developed by De Silva
\cite{DS} in her study of global free boundary graphs that arise
from monotone solutions to the classical one-phase free boundary
problem:
\begin{equation}\label{FBP_0}
\begin{array}{rl}
    \Delta u = 0 & \text{in}\quad \Omega_p(u):=\{x\in \Omega:
u(x)>0\} \\
    |\nabla u| = 1 & \text{on}\quad F_{p}(u) := \Omega \cap
\del\Omega_p(u).
\end{array}
\end{equation}
Namely, a global solution to \eqref{FBP_0} is constructed as the
limit of a sequence of local minimizers of the one-phase energy
functional:
\begin{equation}\label{I0}
    I_0(u, \Omega) = \int_{\Omega} |\nabla u|^2 + \chrc{\{u>0\}}
\end{equation}
constrained to lie between a fixed \emph{strict} subsolution and a
fixed \emph{strict} supersolution to \eqref{FBP_0}. The strictness
condition ensures that the free boundary of each minimizer doesn't
touch the free boundaries of the barriers. Following the classical
ideas of Alt, Cafarelli and Friedman (\cite{AC}, \cite{ACF}), De
Silva shows that $u$ is a global energy minimizing \emph{viscosity}
solution, which is locally Lipschitz continuous and has
non-degenerate growth along its free boundary; moreover, if one
assumes that the two barriers are monotonically increasing in $x_n$,
the global solution can also be chosen to be monotonically
increasing in $x_n$ after a rearrangement. The harder part is the
regularity theory: De Silva's key observation is that the positive
phase of the minimizer is locally an NTA (non-tangentially
accessible) domain (\cite{JK}) which allows the application of the
powerful boundary Harnack principle. By comparing the solution with
a vertical translate, she rules out the possibility that the free
boundary contains any vertical segments, so that it is a graph, and
then she shows that the graph is, in fact, continuous. A more
sophisticated comparison argument by De Silva and Jerison \cite{DSJ}
establishes a Lipschitz bound on the free boundary graph. Hence, by
the classical result of Caffarelli \cite{Caf}, the free boundary is
locally a $C^{1,\alpha}$ graph, so that the global minimizer is
indeed a classical solution to \eqref{FBP_0}.

Obviously, the functionals $I$ and $I_0$ are close relatives: if $u$
minimizes $I(\cdot, \Omega)$ and $D\subset \Omega$ is a (nice
enough) subdomain, such that $D\cap \{u=1\} = \emptyset$, then $u+1$
minimizes $I_0(\cdot, D)$; similarly, if $D\cap \{u=-1\} =
\emptyset$, $1-u$ minimizes $I_0(\cdot, D)$. So, after we construct
a global minimizer $u$ to $I$, we will be in a position to directly
apply the regularity theory for the one-phase energy minimizers from
the discussion above to the free boundary of $u$.

Let us now give a brief outline of the arguments and the structure
of our paper.

\section{Outline of strategy.} First, let us recall the
definition of a classical super/subsolution to the one-phase problem
\eqref{FBP_0} (see for example \cite{Caf}).

\begin{definition}\label{defClaSuper0}
A classical supersolution (resp. subsolution) to \eqref{FBP_0} is a
non-negative continuous function $w$ in $\Omega$ such that
\begin{itemize}
\item $w\in
C^{2}(\overline{\Omega_p(w)})$;
\item $\Delta w \leq 0$ (resp. $\Delta w \geq 0$) in $\Omega_p(w)$;
\item The free boundary $F_p(w)$ is a $C^2$ surface and
\begin{equation*}
    0<|\nabla w| \leq 1 \quad \text{(resp.~ } |\nabla
    w|\geq 1) \quad \text{on}\quad F_p(w).
\end{equation*}
If the inequality above is strict, we call $w$ a \textbf{strict}
super (resp. sub) solution.
\end{itemize}
\end{definition}

The appropriate notion of a classical super/subsolution to our free
boundary problem \eqref{FBP} is, therefore, the following:

\begin{definition}\label{defClaSuper}
A classical supersolution (resp. subsolution) to \eqref{FBP} is a
non-negative continuous function $w$ in $\Omega$ such that
\begin{itemize}
\item $w\in
C^{2}(\overline{\Omega_{\text{in}}(w)})$;
\item $\Delta w \leq 0$ (resp. $\Delta w \geq 0$) in $\Omega_{\text{in}}(w)$;
\item The free boundary $F(w)=F^{+}(w)\sqcup F^-(w)$ consists of two $C^2$ surfaces and
\begin{align*}
    0<|\nabla w| & \leq 1 \quad \text{(resp.~ } |\nabla
    w|\geq 1) \quad \text{on}\quad F^{-}(w), \\
    |\nabla w| & \geq 1 \quad \text{(resp.~ } |\nabla
    w|\leq 1) \quad \text{on}\quad F^{+}(w).
\end{align*}
If the inequalities above are strict, we call $w$ a \textbf{strict}
super (resp. sub) solution to \eqref{FBP}.
\end{itemize}
\end{definition}


As mentioned in the introduction, the driving intuition is that the
level surfaces of a solution to \eqref{FBP} should follow the shape
of the BdGG entire minimal graph $\Gamma = \{(x', x_9)\in
\real^8\times\real: x_9 = F(x')\}$. The function $F:\real^8
\rightarrow\real$ satisfies the minimal surface equation $H[F] = 0$
where $H[\cdot]$ is the mean curvature operator (MCO)
\begin{align*}
    H[F] := \nabla \cdot \left(\frac{\nabla F}{\sqrt{1 + |\nabla
    F|^2}}\right).
\end{align*}
Note that there is a whole one-parameter family of such entire
minimal non-planar graphs, obtained by rescaling $\Gamma$:
\begin{equation*}
    \Gamma_{\alpha} := \alpha^{-1}\Gamma = \{x_9 = F_{\alpha}(x')\}
\end{equation*}
where $\alpha > 0$ and $F_{\alpha}(x'):= \alpha^{-1}F(\alpha x')$.
``Following the shape" should be interpreted in the sense that we
would like the solution $u$ and thus the trapping super/subsolution
$W$ and $V$ to behave asymptotically at infinity like the signed
distance to $\Gamma_{\alpha}$ for some $\alpha>0$:
\begin{equation*}
    V(x) \leq u(x) \leq W(x) \approx \text{signed dist}(x,{\Gamma_{\alpha}})
\end{equation*}
within their interphases (for the definition of the signed distance:
we take the sign to be positive if the point $x\in\real^9$ lies
``above the graph", i.e. if $x_9>F_{\alpha}(x')$, and negative
otherwise). This suggests that the coordinates
\begin{equation}
 \real^9 \ni x \rightarrow (y,z)\in \Gamma_{\alpha}\times \real,  \qquad  x = y + z
\nu_{\alpha}(y), \label{Fermialpha}
\end{equation}
where $\nu_{\alpha}(y)$ is the unit normal to $\Gamma_{\alpha}$ at
$y$ with $\nu_{\alpha}(y)\cdot e_9 >0$, will be particularly
well-suited to the problem. We will later show in Lemma
\ref{fermilemma} that the coordinates
\begin{equation}
 \real^9 \ni x \rightarrow (y,z)\in \Gamma_{1}\times \real,  \qquad  x = y + z
\nu_{1}(y), \label{Fermi}
\end{equation}
are well-defined in a thin band around $\Gamma = \Gamma_1$
\begin{equation*}
    \mathcal{B}_{\Gamma}(d) = \{x\in\real^9: \text{dist}(x,\Gamma) < d\}
\end{equation*}
for $d>0$ small enough. By taking blow-ups of space $x \to
\alpha^{-1} x$ we see the coordinates \eqref{Fermialpha} with
respect to the blow-up $\Gamma_{\alpha}$ will be well defined in the
band $\mathcal{B}_{\Gamma_{\alpha}}(\alpha^{-1}d)$. Thus we can
ensure that the coordinate system \eqref{Fermialpha} is well defined
in a unit-size band $\mathcal{B}_{\Gamma_{\alpha}} =
\mathcal{B}_{\Gamma_{\alpha}}(2)$ for all $\alpha>0$ small enough.
The trick of scaling will prove quite useful in what comes later, as
well. Its effect on the geometry in the unit-width band is described
in \S \ref{SecGeoUWB}, Lemma \ref{lemma_scaling}.

Not surprisingly, we look for a supersolution/subsolution that is a
polynomial in $z$:
\begin{equation*}
    w(y,z) = \sum_{k=0}^m h_k^{\alpha}(y)z^k,
\end{equation*}
where $h_1^{\alpha} \approx 1$ to main order at infinity, whereas
all the other coefficient decay appropriately to $0$. As it turns
out, $m=5$ suffices.

The Euclidean Laplacian is given by the following key formula:
\begin{equation*}
\Delta = \Delta_{\Gamma_{\alpha}(z)} + \del_{z}^2 -
    H_{\Gamma_{\alpha}(z)}(y)\del_z,
\end{equation*}
where $\Delta_{S}$ denotes the Laplace-Beltrami operator on a
surface $S$,
\begin{equation*}
    \Gamma_{\alpha}(z) = \{y+ z\nu_{\alpha}(y): y\in \Gamma\}
\end{equation*}
is a level set for the signed distance to $\Gamma_{\alpha}$ and
$H_{\Gamma_{\alpha}(z)}(y)$ is its mean curvature at $y+z\nu(y)$.
Note that if $k_i^{\alpha}(y)$ denote the principal curvatures of
$\Gamma_{\alpha}$ at $y$
\begin{equation}\label{hgammaz}
H_{\Gamma_{\alpha}(z)} =
\sum_{i=1}^{8}\frac{1}{(k_i^{\alpha})^{-1}-z} =
\sum_{i=1}^{8}\frac{k_i^{\alpha}}{1-k_i^{\alpha}z}.
\end{equation}
Expanding \eqref{hgammaz} in $z$, we formally have
\begin{equation}\label{Hgammaz_series}
    H_{\Gamma_{\alpha}(z)}(y)= (H_{1,\alpha} = 0) + \sum_{l=2}^{\infty} z^{l-1} H_{l,\alpha},
\end{equation}
where $H_{l,\alpha} = \sum_{i=1}^8 (k_i^{\alpha})^l$ is the sum of
the $l$-th powers of the principal curvatures of $\Gamma_{\alpha}$.
It turns out that the principal curvatures
\begin{equation*}
    k_{i}^{\alpha}(y) = O(\alpha(1+\alpha |x'|)^{-1}), \qquad y = (x',
    F_{\alpha}(x'))\in \Gamma_{\alpha}
\end{equation*}
meaning in particular that the series \eqref{Hgammaz_series}
converges rapidly for $(y,z)\in \mathcal{B}_{\Gamma_{\alpha}}$ and
$\alpha$ small enough. However, a more refined understanding of the
asymptotics of the quantities $H_{k} := H_{k,1}$ will be needed.  

Del Pino, Kowalczyk and Wei faced the exact same issue in
\cite{PKW}. For the purpose, they introduce the model graph
$\Gamma_{\infty}$, which has an explicit coordinate description and
which matches $\Gamma$ very well at infinity. This allows one to
approximate geometric data and geometric operators on $\Gamma$ with
their counterparts on $\Gamma_{\infty}$: for example, the second
fundamental form, the quantities $H_{l}$, the intrinsic gradient and
the Laplace-Beltrami operator. We will briefly present their results
concerning the geometry of $\Gamma$ and the closeness between
$\Gamma$ and $\Gamma_{\infty}$, and use their framework to prove
additional relevant estimates in Section \ref{SecGeoUWB}.

Choosing the coefficients $h_k^{\alpha}(y)$ so that $w$ meets the
supersolution conditions is the subject of Section \ref{TheMeat}. In
fact, most of the choices will be imposed on us (see Remark
\ref{forcedchoi}): they will be given in terms of geometric
quantities like $H_{l,\alpha}$ and their covariant derivatives. The
upshot is that (see Lemmas \ref{lemma_lap} and \ref{supGrad})
\begin{equation}\label{upshot}
\begin{aligned}
    \Delta w & = J_{\Gamma_{\alpha}} h_0^{\alpha} + h_2'^{\alpha} - z^2 H_{3, \alpha}
    + \text{~lower order terms} \qquad
    \text{in}~\mathcal{B}_{\Gamma_{\alpha}},\\
    |\nabla w| & = 1 \pm h_2'^{\alpha} + \text{~lower order terms}
    \qquad \text{on}~ \{w =\pm 1\},
\end{aligned}
\end{equation}
where
\begin{equation*}
J_{\Gamma_{\alpha}} = \Delta_{\Gamma_{\alpha}} + |A_{\alpha}|^2
\end{equation*}
is the Jacobi operator on $\Gamma_{\alpha}$ and $h_2'^{\alpha} = 2
h_2^{\alpha} - |A_{\alpha}|^2 h_0^{\alpha}$. Thus, by varying
$h_0^{\alpha}$ we can satisfy the superharmonicity condition and by
varying $h_2'^{\alpha}$ -- the gradient condition on the free
boundary. So, $h_2'^{\alpha}$ needs to be positive and we will also
require that $h_0^{\alpha}>0$. That way, we only have to flip the
signs of the coefficients $h_0^{\alpha}$ and $h_2^{\alpha}$ in the
ansatz (and leave the remaining ones unchanged) in order to produce
a \emph{subsolution} ansatz that automatically lies underneath the
supersolution.

So, we want the function $h_0^{\alpha}$ to be a positive
supersolution for $J_{\Gamma_{\alpha}}$ that satisfies an
appropriate differential inequality. It turns out that $J_{\Gamma}$
admits nonnegative supersolutions $h$ of the following types:
\begin{itemize}
\item Type $1$ is such that $J_{\Gamma}h$ can absorb terms that decay like $r^{-k}$ for
$k>4$. See Proposition \ref{h'}. This is useful when dealing with
the lower order terms in \eqref{upshot}.
\item Type $2$ can take care of the $|H_{3}|$-term which
is globally $O(r^{-3})$ but has the important additional property
that it vanishes on the Simons cone $S = \{(\vec{u}, \vec{v})\in
\real^4 \times \real^4: |\vec{u}|=|\vec{v}|\}$.
\end{itemize}
The Type $1$ supersolution is readily provided by Del Pino,
Kowalczyk and Wei's paper \cite[Proposition 4.2(b)]{PKW}. We build
the Type $2$ supersolution ourselves in Section
\ref{subsec_Supersols} (asymptotically in Lemma \ref{h"_infty} and
globally in Proposition \ref{h"}) and the construction involves a
very delicate patching of two supersolutions in a region of the
graph over the Simons cone. The ingredients are contained in the
analysis of the linearized mean curvature operator $H'[F_{\infty}]$
around $\Gamma_{\infty}$ carried out in \cite[\S 7]{PKW}, Lemma 7.2
and 7.3; for the reader's convenience we state these results in
Appendix \ref{PKWsupsinfinity}. The operators $J_{\Gamma_{\infty}}:=
\Delta_{\Gamma_{\infty}} + |A_{\infty}|^2$ and $H'[F_{\infty}]$ are
closely related (see \eqref{JacobiandLinearizedMC}) and in turn
$J_{\Gamma_{\infty}}$ is asymptotically close to $J_{\Gamma}$ (see
\eqref{Jacobiclo}). This allows one to first build a (weak)
supersolution away from the origin, which can then be upgraded to a
global smooth supersolution via elliptic theory.

Having these two types of barriers for $J_{\Gamma}$ we will be able
to satisfy the free boundary supersolution conditions far away from
the origin. In order to satisfy them globally, we employ the trick
of scaling by $\alpha$. That way we can also ensure that both the
supersolution and the subsolution are monotonically increasing in
$x_9$ for all small enough $\alpha>0$.

Once we have obtained the monotone sub-super solution pair $V\leq W$
we proceed to construct the solution $u$ to \eqref{FBP} as a global
minimizer $u$ of the functional $I$, constrained to lie in-between
$V\leq u\leq W$. This is the subject of Section
\ref{Section_FBexireg}.

\section*{Acknowledgements.} The author would like to thank his
advisor, David Jerison, for suggesting this problem and for his
exquisite guidance.

\section{The Bombieri-De Giorgi-Giusti graph $\Gamma$ and its approximation.}\label{SecGeoUWB}

In this section we will describe several results concerning the
asymptotic geometry of the entire minimal graph in $9$ dimensions,
constructed by Bombieri, De Giorgi and Giusti in \cite{BdGG}. Some
of them have been covered by the analysis of the graph $\Gamma$,
carried out by Del Pino, Kowalczyk and Wei in \cite{PKW} (Lemmas
\ref{fermilemma}, \ref{lemma_levelComp}, \ref{lemma_scaling} and
\ref{TcoorandsizeofG} below). We will state those in a form suitable
for our later computations. Furthermore, we will establish the
important estimates for the covariant derivatives of $H_l$ in Lemma
\ref{lemma_Hlest}.

Let us first set notation. Recall, we denote the entire minimal
graph by
\begin{equation*}
 \Gamma = \{(x',x_9)\in \real^8\times \real: x_9 = F(x')\} \subset \real^8\times \real,
\end{equation*}
where $F:\real^8 \rightarrow \real$ is an entire solution to the
minimal surface equation (MSE):
\begin{equation}\label{MSE}
    \nabla\cdot\left(\frac{\nabla F}{\sqrt{1+|\nabla F|^2}}\right) =
    0 \quad \text{in} \quad \real^8.
\end{equation}
The graph enjoys certain nice symmetries. Write
$x'=(\vec{u},\vec{v})\in \real^8$, where $\vec{u},\vec{v} \in
\real^4$ and $u=|\vec{u}|$, $v=|\vec{v}|$. Then $F$ satisfies
\begin{equation}\label{sym}
\begin{aligned}
& F \text{ is radially symmetric in
both } \vec{u}, \vec{v}, \text{
i.e. } F = F(u,v) \\
& F(u,v) = - F(v,u),
\end{aligned}
\end{equation}
so that $F$ vanishes on the \emph{Simons cone}
\begin{equation*}
S = \{u = v\} = \{x_1^2 + \cdot + x_4^2 = x_5^2 + \cdot + x_8^2 \}
\subset
    \real^8.
\end{equation*}



\subsection{Geometry of the unit-width band around
$\Gamma$.}

We will use powers of
\begin{equation*}
    r(x) = |x'| \quad \text{for} \quad x=(x',x_9) \in
    \real^8\times\real=\real^9.
\end{equation*}
to measure the decay rate of various quantities at infinity. As
mentioned in the beginning, we are interested in the domain of
definition for the coordinates \eqref{Fermi}:
\begin{equation*}
    \real^9 \ni x \to (y,z)\in\Gamma \times \real \qquad x = y + z
    \nu(y),
\end{equation*}
where $\nu(y)$ is the unit normal to $\Gamma$ at $y\in\Gamma$ such
that $\nu(y)\cdot e_9 >0$. In particular, we are considering a type
of domains which is a thin band around the graph $\Gamma$:
\begin{equation*}
    \mathcal{B}_{\Gamma}(d) = \{x\in\real^9: \text{dist}(x,\Gamma) <
    d\}.
\end{equation*}

\begin{lemma}[cf. Remark 8.1 in \cite{PKW}]\label{fermilemma}
There exists a small enough $d >0$ such that the coordinates
\eqref{Fermi} are well-defined in $\mathcal{B}_{\Gamma}(d)$.
\end{lemma}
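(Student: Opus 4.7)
The plan is to prove that the Fermi map $\Phi:\Gamma\times(-d,d)\to\real^9$, $\Phi(y,z):=y+z\nu(y)$, is a $C^\infty$ diffeomorphism onto $\mathcal{B}_\Gamma(d)$ for all sufficiently small $d>0$; the Fermi coordinates are then read off as its inverse.

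First I would secure a uniform bound on the second fundamental form of $\Gamma$: the asymptotics $k_i(y) = O(r^{-1})$ recorded in \cite{PKW}, together with the smoothness and properness of $\Gamma$ on bounded pieces, give $\sup_{\Gamma}|A| \leq C_0 < \infty$. In any local orthonormal frame diagonalizing the shape operator at $y$, the differential $d\Phi_{(y,z)}$ sends tangential basis vectors $e_i$ to $(1-zk_i)e_i$ and $\partial_z$ to $\nu(y)$; since $\nu(y)\perp T_y\Gamma$, the Jacobian determinant equals $\prod_{i=1}^{8}(1-zk_i)$, which is bounded below by $2^{-8}$ as soon as $|z|\leq d_1:=1/(2C_0)$. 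The inverse function theorem then yields local invertibility of $\Phi$ on $\Gamma\times(-d_1,d_1)$.

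The main obstacle is upgrading this local statement to global injectivity, which requires the graph structure of $\Gamma$ in addition to the curvature bound. A standard calculus argument, combining $|A|\leq C_0$ with the fact that $\Gamma$ is an entire graph over $\real^8$ (so that no second sheet can re-enter a small Euclidean ball), produces a universal scale $\rho = \rho(C_0)>0$ such that for every $y_0\in\Gamma$ the intersection $\Gamma\cap B_{\rho}(y_0)$ is a single connected piece representable as the graph of a $C^2$ function on a neighborhood of the origin in $T_{y_0}\Gamma$ with $C^2$-norm controlled by $C_0$. On such a chart the classical tubular-neighborhood theorem provides a quantitative Fermi injectivity radius $\rho_1 = \rho_1(C_0)>0$. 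If now $\Phi(y_1,z_1) = \Phi(y_2,z_2) = x$ with $y_1\neq y_2$ and $|z_i|<d$, then $|y_1-y_2|\leq 2d$; choosing $d < \min(d_1,\rho_1,\rho/2)$ forces $y_2$ into the local chart around $y_1$, where the local injectivity just established yields $(y_1,z_1)=(y_2,z_2)$, a contradiction.

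Surjectivity onto $\mathcal{B}_\Gamma(d)$ is then immediate: since $\Gamma$ is closed in $\real^9$, every $x\in\mathcal{B}_\Gamma(d)$ admits a nearest point $y\in\Gamma$, and first-order optimality forces $x-y\perp T_y\Gamma$, i.e.\ $x = \Phi(y,\pm|x-y|)$ with $|x-y|<d$; uniqueness of $(y,z)$ is the injectivity of the previous step. Combining these three items exhibits $\Phi$ as a bijective local diffeomorphism, hence a global diffeomorphism onto $\mathcal{B}_\Gamma(d)$, and the Fermi coordinates \eqref{Fermi} are well-defined.
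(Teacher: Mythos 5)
Your argument is in the same spirit as the paper's and reaches the right conclusion, but the step you label a ``standard calculus argument'' is exactly where the paper leans on something substantive: Lemma~\ref{TcoorandsizeofG} (Del Pino--Kowalczyk--Wei's Proposition~3.1 and \S 8.1), which rests on Simon's curvature estimate for minimal graphs together with Schauder estimates for the MSE. The claim that $\Gamma\cap B_{\rho}(y_0)$ is a \emph{single} connected piece representable as a graph over $T_{y_0}\Gamma$ at a scale depending only on $\sup_{\Gamma}|A|$ is not a free consequence of $|A|\le C_0$ plus the entire-graph property: ruling out a ``second sheet'' near $y_0$ when $T_{y_0}\Gamma$ is steeply tilted requires control of the \emph{gradient} of $F$, not merely its second fundamental form, and for the BdGG graph $|\nabla F|\sim r^2$ is unbounded. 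Lemma~\ref{TcoorandsizeofG} supplies precisely this local graph structure (indeed at the growing scale $\beta(1+r(y))$) along with the quantitative normal-tilt bound \eqref{normtilt}.

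Once that lemma is in hand the paper's proof dispenses with your Jacobian/inverse-function-theorem scaffolding entirely: if $x=y_1+z\nu(y_1)=y_2+z\nu(y_2)$ with $y_1\ne y_2$ and $|z|<d$, then $|y_1-y_2|\le 2d$ places $y_2$ in the graph chart over $T(y_1)$, and \eqref{normtilt} gives
\[
|y_1-y_2|\le d\,|\nu(y_1)-\nu(y_2)|\le \frac{cd}{1+r(y_1)}\,|y_1-y_2|\le cd\,|y_1-y_2|,
\]
a contradiction when $cd<1$. So the core of the two arguments is identical --- a Lipschitz estimate on the Gauss map forces $y_1=y_2$ --- and your extra items (Jacobian lower bound, surjectivity via nearest-point projection) are correct and make the tubular-neighbourhood picture explicit, but the geometric input you treat as elementary should instead be attributed to Lemma~\ref{TcoorandsizeofG}.
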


The level surfaces for the signed distance to $\Gamma$
\begin{equation*}
    \Gamma(z) = \{x \in \real^9:~ \text{signed dist}(x,\Gamma) = z\}
\end{equation*}
are prominent in our analysis as it will be necessary to estimate
derivative operators on $\Gamma(z)$, for small $z$. According to
Lemma \ref{fermilemma}, the coordinates \eqref{Fermi} are
well-defined in a thin-enough band $\mathcal{B}_{\Gamma}(d)$.
Equivalently, the orthogonal projection onto $\Gamma$ is
well-defined in $\mathcal{B}_{\Gamma}(d)$:
\begin{equation*}
    \pi_{\Gamma}: \mathcal{B}_{\Gamma}(d) \rightarrow \Gamma \qquad
    \pi_{\Gamma}(x) = y
\end{equation*}
and provides a diffeomorphism between $\Gamma(z)$ and $\Gamma$ for
each $z$, $|z|<d$. Thus, one can identify functions $f$ defined on
$\Gamma$ with functions $\tilde{f}$ defined on $\Gamma(z)$ via the
projection $\pi_{\Gamma}$:
\begin{equation}\label{pilift}
    \tilde{f} = f\circ \pi_{\Gamma}.
\end{equation}
We will use the following lemma, quantifying the proximity of the
gradient and the Laplace-Beltrami operator on $\Gamma(z)$ acting on
$\tilde{f}$ to the gradient and the Laplace-Beltrami operator on
$\Gamma$ acting on $f$.
\begin{lemma}[cf. \S 3.1 in \cite{PKW}]\label{lemma_levelComp} There exists $0<d<2$ small enough so that
\begin{itemize}
\item The coordinates \eqref{Fermi} are well-defined;
\item If we fix $z$ with $|z|<d$ and assume $\tilde{f}\in
C^2(\Gamma(z))$ and $f\in C^2(\Gamma)$ are related via
\eqref{pilift}, we have the following comparison of their gradients,
viewed as Euclidean vectors:
\begin{equation}\label{GradComp}
 |\nabla_{\Gamma(z)}\tilde{f} - (\nabla_{\Gamma}f) \circ \pi_{\Gamma}| = O \Big(z\frac{|D_{\Gamma}f|}{1+r}\Big)\circ \pi_{\Gamma},
\end{equation}
while the Laplace-Beltrami operators on $\Gamma(z)$ and $\Gamma$ are
related by:
\begin{equation}\label{DeltaComp}
\Delta_{\Gamma(z)}\tilde{f} = \left(\Delta_{\Gamma} f + O\Big(z
\frac{|D_{\Gamma}^2 f|}{1+r} + z\frac{|D_{\Gamma} f|}{1 + r^2} \Big)
\right)\circ \pi_{\Gamma}.
\end{equation}
\end{itemize}
\end{lemma}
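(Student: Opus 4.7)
The first bullet is essentially a quantitative tubular neighborhood statement and is already contained in Lemma \ref{fermilemma}; in what follows I will shrink $d$ if necessary. The natural framework for the comparison of differential operators is the diffeomorphism
\[
    \Phi_z : \Gamma \to \Gamma(z), \qquad \Phi_z(y) = y + z\nu(y),
\]
whose inverse is $\pi_\Gamma$ restricted to $\Gamma(z)$. Fix $y_0 \in \Gamma$ and let $\{e_i\}_{i=1}^8$ be an orthonormal frame of principal directions at $y_0$ with principal curvatures $k_i$. The Weingarten relation $d\nu(e_i) = -k_i e_i$ gives $d\Phi_z(e_i) = (1-zk_i)e_i$, so the pulled-back metric on $\Gamma(z)$ is $g^z_{ij} = (1-zk_i)(1-zk_j)\delta_{ij}$ at $y_0$, the principal directions on $\Gamma(z)$ at $\Phi_z(y_0)$ agree with those of $\Gamma$ at $y_0$ as Euclidean vectors, and the associated principal curvatures become $k_i/(1-zk_i)$. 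Throughout the computation I would invoke the decay bounds $|k_i(y)| = O((1+r)^{-1})$ and $|\nabla_\Gamma k_i(y)| = O((1+r)^{-2})$, which are part of the geometric analysis of $\Gamma$ in \cite{PKW} and are recalled later in this section.

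For \eqref{GradComp}, the frame $\tilde e_i := (1-zk_i)^{-1} d\Phi_z(e_i) = e_i$ is orthonormal on $\Gamma(z)$ at $\Phi_z(y_0)$ and coincides with $e_i$ as a Euclidean vector. Since $\tilde f \circ \Phi_z = f$, one has $d\Phi_z(e_i)\tilde f = e_i f$, whence $\tilde e_i \tilde f = (1-zk_i)^{-1} e_i f$. Subtracting the analogous expansion for $\nabla_\Gamma f$ yields
\[
    \nabla_{\Gamma(z)}\tilde f - (\nabla_\Gamma f)\circ\pi_\Gamma = \sum_i \frac{zk_i}{1-zk_i}\,(e_i f)\,e_i,
\]
and the bound on $k_i$ gives \eqref{GradComp}. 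For \eqref{DeltaComp}, I would compute $\Delta_{\Gamma(z)}\tilde f$ at $\Phi_z(y_0)$ in the chart pushed forward from normal coordinates of $\Gamma$ at $y_0$, using
\[
    \Delta_{\Gamma(z)}\tilde f = \frac{1}{\sqrt{\det g^z}}\,\partial_i\Bigl(\sqrt{\det g^z}\,(g^z)^{ij}\partial_j \tilde f\Bigr),
\]
expand each factor as a power series in $zk_i$, and collect terms. The $z^0$ contribution is exactly $\Delta_\Gamma f$; the remainder splits into two types, namely products of the form $(zk_i)\,\partial_i^2 f$ coming from $(g^z)^{ii} = (1-zk_i)^{-2}$, and terms involving first derivatives of the $k_i$ multiplied by $\partial_i f$, arising upon differentiating $(g^z)^{ii}$ or $\sqrt{\det g^z}$. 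Plugging in the pointwise bounds on $k_i$ and $\nabla_\Gamma k_i$ yields the two error terms in \eqref{DeltaComp}.

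The main obstacle is the Laplace--Beltrami comparison, because principal coordinates only diagonalize $g^z$ at the single point $y_0$; away from $y_0$ there are additional cross-terms. The cleanest way to handle this is to perform all the calculations pointwise at $y_0$ using normal coordinates for $\Gamma$ there, so that the Christoffel symbols of $\Gamma$ vanish at $y_0$ and the only non-trivial contributions come from derivatives of $k_i$ (which enter only through $g^z$) and from the diagonal distortion factors $(1-zk_i)$. Since both the sizes of $k_i$ and of their covariant derivatives are controlled by the intrinsic radial coordinate $r$ via the estimates above, the resulting expression is automatically of the form stated in \eqref{DeltaComp}, and no further cancellation has to be exploited. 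The work therefore reduces to a careful but essentially algebraic expansion, with the genuine analytic input being the curvature decay of $\Gamma$ quoted from \cite{PKW}.
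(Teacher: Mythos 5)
Your proposal is sound and, at bottom, runs parallel to the paper's proof: in both cases the comparison is obtained by computing the metric on $\Gamma(z)$ induced via the map $\Phi_z(y)=y+z\nu(y)$, noting it differs from the metric on $\Gamma$ by $O(z/\rho)$ with one more power of $\rho$ lost per coordinate derivative, and then reading off the gradient/Laplacian discrepancies from the standard formulas. The paper works in the graph-over-tangent-plane chart $t\mapsto(t,G(t))$ and pulls the curvature decay from the explicit bounds $|D_t^kG|\lesssim\rho^{1-k}$, whereas you work in normal coordinates at $y_0$ and feed in curvature decay directly; these are equivalent inputs (indeed the paper's $G_{ij}$ at $t=0$ \emph{is} the second fundamental form). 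Your exact gradient identity $\nabla_{\Gamma(z)}\tilde f-\nabla_\Gamma f=\sum_i\frac{zk_i}{1-zk_i}(e_if)\,e_i$ is a nice extra, cleaner than the paper's estimate.

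One technical correction is needed for the Laplacian step. You invoke "derivatives of the $k_i$," but individual principal curvatures are in general only Lipschitz (not $C^1$) across points where eigenvalues collide, so $\nabla_\Gamma k_i$ is not well defined. You should instead expand $g^z$, $\sqrt{\det g^z}$ and $(g^z)^{-1}$ as smooth functions of the shape operator $A$ (via $d\Phi_z=I-zA$, hence $g^z=(I-zA)^{\mathsf T}g(I-zA)$), whose covariant derivatives do satisfy $|D_\Gamma^mA|=O(\rho^{-1-m})$ --- this is exactly what the proof of Lemma \ref{lemma_Hlest} shows at the level of $D_t^m(A_i{}^j)$, and it transfers to covariant derivatives by \eqref{commensurate}. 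Principal directions may still be used \emph{at the single point} $y_0$ to diagonalize, but all derivative bounds must be phrased through $A$, not the eigenvalues. With that substitution your plan goes through.
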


Recall that we are interested in the decay rate of the quantities
\begin{equation*}
    H_{l} = \sum_{i=1}^8 k_i^l,
\end{equation*}
where $\{k_i\}_{i=1}^8$ are the principal curvatures of $\Gamma$ --
the eigenvalues of the second fundamental form. We prove the
following useful estimates.

\begin{lemma}\label{lemma_Hlest} The $k$-th order intrinsic derivatives of the quantity
$H_l$ are bounded by
\begin{equation}\label{Hl_FullEst}
    |D_{\Gamma}^k H_{l}(y)| \leq \frac{C_{kl}}{1+ r^{l+k}(y)}.
\end{equation}
for some numerical constants $C_{kl}>0$.
\end{lemma}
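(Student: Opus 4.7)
The key observation is that $H_l$ is a symmetric polynomial in the principal curvatures and hence may be written invariantly as $H_l = \mathrm{tr}(A^l)$, where $A$ denotes the shape operator of $\Gamma$. Since covariant differentiation commutes with trace and tensor contraction, the Leibniz rule gives a decomposition
\begin{equation*}
D_{\Gamma}^k H_l \;=\; \sum_{i_1 + \cdots + i_l = k} c_{i_1,\ldots,i_l}\; \mathrm{tr}\bigl(D_{\Gamma}^{i_1} A \otimes \cdots \otimes D_{\Gamma}^{i_l} A\bigr),
\end{equation*}
with combinatorial constants $c_{i_1,\ldots,i_l}$. It therefore suffices to establish the pointwise estimates
\begin{equation*}
|D_{\Gamma}^{j} A(y)| \;\leq\; \frac{C_j}{(1+r(y))^{1+j}}, \qquad y\in \Gamma,\; j\geq 0,
\end{equation*}
since multiplying $l$ such factors whose derivative orders sum to $k$ then yields the desired $(1+r)^{-(l+k)}$ decay, which is comparable to $(1+r^{l+k})^{-1}$.

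For the base case $j=0$, the bound $|A(y)|\lesssim (1+r(y))^{-1}$ is essentially built into the framework of \cite{PKW}: on the model graph $\Gamma_{\infty}$ the shape operator is computable in closed form in the Simons-cone variables $(u,v)$ and exhibits the $r^{-1}$ decay, and the quantitative closeness $\Gamma \approx \Gamma_{\infty}$ at infinity transfers the estimate to $\Gamma$. For $j\geq 1$ I would argue by scaling and interior elliptic regularity for the minimal surface equation \eqref{MSE}. Fix $y_0\in \Gamma$ with $r_0 := r(y_0)$ large, and rescale the ambient space by the factor $r_0^{-1}$ about $y_0$. Because $|A|\lesssim r_0^{-1}$ throughout $\Gamma \cap B(y_0, c\, r_0)$, the rescaled portion of $\Gamma$ is a minimal graph over its tangent plane at $y_0$ on a ball of fixed size, whose defining function $\widetilde{F}$ satisfies $|D^2 \widetilde{F}|\leq C$ uniformly. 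Standard interior $C^{k,\alpha}$-estimates for the quasilinear MSE then give $|D^{2+j} \widetilde{F}| \leq C_j$ on a smaller ball; undoing the scaling produces $|D_{\Gamma}^{j} A(y_0)|\leq C_j\, r_0^{-1-j}$, and the case of bounded $r_0$ is automatic from the smoothness of $\Gamma$.

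The main obstacle is therefore not the combinatorial step but rather the globalization of the derivative estimates on $A$: one needs $|A|\lesssim (1+r)^{-1}$ uniformly, together with enough Lipschitz-type control on the graph $\Gamma$ at every scale to guarantee that the rescaled surface is genuinely a uniformly controlled minimal graph over a fixed-size ball. Both ingredients are supplied by the asymptotic analysis of $\Gamma$ and $\Gamma_{\infty}$ carried out in \cite{PKW} (the estimates imported in this section) and by the fact that $F-F_{\infty}$ and finitely many of its derivatives decay faster than the corresponding quantities for $F_\infty$ alone. Once those are in hand, substituting the bound on $D_{\Gamma}^{j} A$ into the Leibniz expansion of $D_{\Gamma}^k H_l$ immediately yields \eqref{Hl_FullEst} with explicit constants $C_{kl}$.
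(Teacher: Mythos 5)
Your overall strategy is the same as the paper's: write $H_l = \operatorname{tr}(A^l)$, reduce via Leibniz to bounds on derivatives of the second fundamental form, and feed in the decay of those derivatives. The decay itself ultimately comes, as you say, from rescaling and interior elliptic regularity for the MSE, which the paper packages in Lemma~\ref{TcoorandsizeofG} (citing \cite{PKW}) via the local tangent-plane graph $G(t)$ with $|D_t^k G| \leq c_k \rho^{1-k}$, $\rho=1+r(y)$.

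There is, however, a genuine gap in the step ``undoing the scaling produces $|D_\Gamma^j A(y_0)| \leq C_j r_0^{-1-j}$.'' The rescaling argument gives bounds on the Euclidean coordinate derivatives $D_t^m$ of $A$ (equivalently of $G$) in the tangent-plane chart; passing from these to bounds on the \emph{intrinsic covariant} derivatives $\nabla^j A$ is not automatic, because covariant differentiation of a tensor produces Christoffel-symbol terms whose size and whose own derivative decay one must control. This conversion is exactly where the paper does its real work: it proves, by induction on the order, the estimate
\begin{equation*}
|D_{\Gamma}^k f(y)| \leq \sum_{j=1}^k c_j \rho^{j-k}|D_t^j f(0)|,
\end{equation*}
after first showing $|D_t^m g_{ij}|, |D_t^m g^{ij}| \lesssim \rho^{-m}$ and hence $|D_t^m \Gamma^k_{ij}| \lesssim \rho^{-1-m}$. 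The paper avoids ever bounding $\nabla^j A$ as a tensor: it bounds $D_t^m(A_i{}^j)$ and $D_t^m H_l$ in coordinates, and only at the very end converts the \emph{scalar} $H_l$ to intrinsic derivatives via the displayed estimate. Your proposal would work if you inserted the analogous Christoffel/metric-decay lemma, but as written it assumes the conclusion of that lemma for the tensor $A$ without argument, and that is the nontrivial content, not the combinatorial Leibniz step you flag as the main point. A smaller redundancy: you re-derive the MSE interior estimates by rescaling, whereas the paper imports them ready-made from \cite{PKW}.
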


Finally, we would like to investigate how scaling space $x
\rightarrow \alpha^{-1} x$ affects the estimates in Lemmas
\ref{lemma_levelComp} and \ref{lemma_Hlest}. For a function $f$ on
$\Gamma$, define $f_{\alpha}$ to be the corresponding function on
$\Gamma_{\alpha} = \alpha^{-1}\Gamma$:
\[
    f_{\alpha}(y) = f(\alpha y).
\]
Also, denote $\{k_{i}^{\alpha}\}_{i=1}^8$ to be the principal
curvatures of $\Gamma_{\alpha}$ and
\begin{equation*}
    H_{l,\alpha} = \sum_{i=1}^8 (k_{i}^{\alpha})^l.
\end{equation*}

\begin{lemma}\label{lemma_scaling} Scaling space $x \rightarrow
\alpha^{-1}x$ has the following effects:
\begin{itemize}
\item The intrinsic $k$-th order derivatives of $f_{\alpha}$, $k=0,1,2,\ldots,$ scale like
\begin{equation}\label{der_scaling}
    D_{\Gamma_{\alpha}}^k f_{\alpha}(y) = \alpha^k (D_{\Gamma}^k f)(\alpha
    y)\qquad y\in \Gamma_{\alpha};
\end{equation}

\item The quantities
\begin{align}\label{Hl_scaling}
    D_{\Gamma_{\alpha}}^k H_{l,\alpha}(y) & = \alpha^{k+l} (D_{\Gamma}^k H_{l})(\alpha
    y) = O\left(\frac{\alpha^{l+k}}{1+ (\alpha
    r(y))^{l+k}}\right) \qquad y\in \Gamma_{\alpha};
\end{align}

\item If the coordinates \eqref{Fermi} are well-defined in a band
$\mathcal{B}_{\Gamma}(d)$, the coordinates
\begin{equation*}
    \real^9 \ni x \rightarrow (y,z)\in \Gamma_{\alpha}\times \real \qquad x
    = y + z \nu_{\alpha}(y),
\end{equation*}
where $\nu_{\alpha}(y) = \nu(\alpha y)$ is the unit-normal to
$\Gamma_{\alpha}$ at $y$, will be well-defined in the band
$\mathcal{B}_{\Gamma_{\alpha}}(d/\alpha)$. Thus, the orthogonal
projection onto $\Gamma_{\alpha}$ is well-defined in
$\mathcal{B}_{\Gamma_{\alpha}}(d/\alpha)$:
\begin{equation*}
    \pi_{\Gamma_{\alpha}}: \mathcal{B}_{\Gamma_{\alpha}}(d/\alpha)
    \to \Gamma_{\alpha} \qquad \pi_{\Gamma_{\alpha}}(x) = y;
\end{equation*}

\item If $|z| < d/\alpha$ is small enough and $\tilde{f}_{\alpha} \in C^2(\Gamma_{\alpha}(z))$ and $f_{\alpha}\in
C^2(\Gamma_{\alpha})$ are related via $\tilde{f}_{\alpha} =
f_{\alpha}\circ \pi_{\Gamma_{\alpha}}$, the estimates corresponding
to \eqref{GradComp} and \eqref{DeltaComp} take the form
\begin{align}
     & |\nabla_{\Gamma_{\alpha}(z)}\tilde{f}_{\alpha} - (\nabla_{\Gamma_{\alpha}}f_{\alpha}) \circ \pi_{\Gamma_{\alpha}}| = O \Big(\frac{\alpha z|D_{\Gamma_{\alpha}}f_{\alpha}|}{1+\alpha r}\Big)\circ \pi_{\Gamma_{\alpha}},
\label{GradCompAlpha} \\
    & \Delta_{\Gamma_{\alpha}(z)}\tilde{f}_{\alpha} = \left(\Delta_{\Gamma_{\alpha}} f_{\alpha} + O\Big(\alpha z
\frac{|D_{\Gamma_{\alpha}}^2 f_{\alpha}|}{1+\alpha r} + \alpha^2
z\frac{|D_{\Gamma_{\alpha}} f_{\alpha}|}{1 + (\alpha r)^2} \Big)
\right)\circ \pi_{\Gamma_{\alpha}}. \label{DeltaCompAlpha}
\end{align}
\end{itemize}
\end{lemma}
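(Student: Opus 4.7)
The plan is to derive every item in Lemma \ref{lemma_scaling} as a consequence of the single underlying fact that $\Gamma_{\alpha}=\alpha^{-1}\Gamma$ and that the dilation $s_{\alpha}\colon\Gamma_{\alpha}\to\Gamma$, $s_{\alpha}(y):=\alpha y$, is a homothety of the induced Riemannian metrics with constant scale factor $\alpha$, i.e.\ $s_{\alpha}^{\ast}g_{\Gamma}=\alpha^{2}\,g_{\Gamma_{\alpha}}$. Once this is in place the four assertions follow by routine bookkeeping, so I would first prove (1), then derive (2) and (3) from it, and finally obtain (4) by reducing it to Lemma \ref{lemma_levelComp} applied on $\Gamma(\alpha z)$.

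For (1), I would write $f_{\alpha}=f\circ s_{\alpha}$. Since multiplying the metric by a positive constant leaves the Levi--Civita connection (and hence all Christoffel symbols) unchanged, the covariant derivative tensors pull back trivially: $s_{\alpha}^{\ast}(D_{\Gamma}^{k}f)=D_{\Gamma_{\alpha}}^{k}(f\circ s_{\alpha})=D_{\Gamma_{\alpha}}^{k}f_{\alpha}$ as $(0,k)$-tensors. The only scaling enters through the contractions in the norm: for a $(0,k)$-tensor $T$ one has $|T|_{\alpha^{-2}g}^{2}=\alpha^{2k}|T|_{g}^{2}$, which translates into $|D_{\Gamma_{\alpha}}^{k}f_{\alpha}|(y)=\alpha^{k}|D_{\Gamma}^{k}f|(\alpha y)$, i.e.\ \eqref{der_scaling}. (The same identity can be checked coordinate-free by expanding in a Euclidean orthonormal frame tangent to $\Gamma$ at $\alpha y$, which is also tangent to $\Gamma_{\alpha}$ at $y$ since scaling preserves directions, and applying the chain rule.)

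For (2), the principal curvatures of a hypersurface have units of inverse length, so $k_{i}^{\alpha}(y)=\alpha\,k_{i}(\alpha y)$ for $y\in\Gamma_{\alpha}$, whence $H_{l,\alpha}(y)=\alpha^{l}H_{l}(\alpha y)$. Combining this with (1) gives $D_{\Gamma_{\alpha}}^{k}H_{l,\alpha}(y)=\alpha^{l+k}(D_{\Gamma}^{k}H_{l})(\alpha y)$, and the estimate follows from Lemma \ref{lemma_Hlest} together with the identity $r(\alpha y)=\alpha\,r(y)$ (since $F_{\alpha}(y')=\alpha^{-1}F(\alpha y')$). For (3), note that $\nu_{\alpha}(y)=\nu(\alpha y)$ because scaling preserves tangent subspaces and orientation, so the map $(y,z)\mapsto y+z\nu_{\alpha}(y)$ is conjugate to $(y',z')\mapsto y'+z'\nu(y')$ via the dilation: if the latter is a diffeomorphism on $\Gamma\times(-d,d)$ onto $\mathcal{B}_{\Gamma}(d)$, then after applying $s_{\alpha}^{-1}(x)=\alpha^{-1}x$ one gets a diffeomorphism of $\Gamma_{\alpha}\times(-d/\alpha,d/\alpha)$ onto $\mathcal{B}_{\Gamma_{\alpha}}(d/\alpha)$, and the projection $\pi_{\Gamma_{\alpha}}$ is related to $\pi_{\Gamma}$ by $\pi_{\Gamma_{\alpha}}\circ s_{\alpha}^{-1}=s_{\alpha}^{-1}\circ\pi_{\Gamma}$.

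Finally, for (4), I would exploit the fact that $\Gamma_{\alpha}(z)=\alpha^{-1}\Gamma(\alpha z)$ (level sets of the signed distance scale with space), which follows immediately from (3). Define $\tilde{f}=f\circ\pi_{\Gamma}$ on $\Gamma(\alpha z)$; then $\tilde{f}_{\alpha}(x)=\tilde{f}(\alpha x)$ because $\pi_{\Gamma}\circ s_{\alpha}^{-1}=s_{\alpha}^{-1}\circ\pi_{\Gamma_{\alpha}}$. Applying the argument of (1) with $\Gamma(\alpha z)$, $\Gamma_{\alpha}(z)$ in place of $\Gamma$, $\Gamma_{\alpha}$ yields $\nabla_{\Gamma_{\alpha}(z)}\tilde{f}_{\alpha}(x)=\alpha\,(\nabla_{\Gamma(\alpha z)}\tilde{f})(\alpha x)$ and $\Delta_{\Gamma_{\alpha}(z)}\tilde{f}_{\alpha}(x)=\alpha^{2}\,(\Delta_{\Gamma(\alpha z)}\tilde{f})(\alpha x)$ (as Euclidean vectors/scalars). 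Feeding these identities, together with the analogous ones on $\Gamma$ vs.\ $\Gamma_{\alpha}$ from (1), into \eqref{GradComp} and \eqref{DeltaComp} applied at level $\alpha z$ of $\Gamma$, the factors of $\alpha$ reorganize exactly to produce \eqref{GradCompAlpha} and \eqref{DeltaCompAlpha}; e.g.\ the error $O(\alpha z\,|D_{\Gamma}f|/(1+r))$ at the point $\alpha y$ becomes $O(\alpha z\cdot\alpha^{-1}|D_{\Gamma_{\alpha}}f_{\alpha}|/(1+\alpha r))$ at $y$, which then absorbs the overall prefactor of $\alpha$ to give $O(\alpha z|D_{\Gamma_{\alpha}}f_{\alpha}|/(1+\alpha r))$. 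The computation is purely mechanical; the only subtlety to watch is that both sides of \eqref{GradComp} mix vectors based at different points, but since dilations preserve directions the Euclidean identification used in Lemma \ref{lemma_levelComp} commutes with $s_{\alpha}$, so no additional error is introduced. There is no genuine obstacle here — the entire lemma is a careful exercise in dimensional analysis.
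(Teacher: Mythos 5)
Your proposal is correct and follows the same length-scale argument that the paper uses: both proofs rest on the scaling of principal curvatures, the conjugation of Fermi coordinates by the dilation, and the identities $\nabla_{\Gamma_\alpha(z)}\tilde f_\alpha = \alpha\,(\nabla_{\Gamma(\alpha z)}\tilde f)\circ s_\alpha$, $\Delta_{\Gamma_\alpha(z)}\tilde f_\alpha = \alpha^2\,(\Delta_{\Gamma(\alpha z)}\tilde f)\circ s_\alpha$, with the paper simply declaring \eqref{der_scaling} ``immediate'' where you justify it via the homothety $s_\alpha^*g_\Gamma=\alpha^2 g_{\Gamma_\alpha}$ and the scale-invariance of the Levi--Civita connection. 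One small slip: in step (4) the commutation relation should read $\pi_{\Gamma_\alpha}\circ s_\alpha^{-1}=s_\alpha^{-1}\circ\pi_{\Gamma}$ (equivalently $s_\alpha\circ\pi_{\Gamma_\alpha}=\pi_\Gamma\circ s_\alpha$), as you correctly wrote in step (3); the version with $\Gamma$ and $\Gamma_\alpha$ swapped does not typecheck, but the identity actually used is the correct one.
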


Let us now turn to the proofs of the aforementioned lemmas.

We will take advantage of the following local representation of the
minimal graph $\Gamma$. At each $y = (x_0', F(x_0'))\in \Gamma$
denote by $T = T(y)$ the tangent hyperplane to $\Gamma$ at $y$. A
simple consequence of the Implicit Function Theorem states that
$\Gamma$ can locally be viewed as a smooth (minimal) graph over a
neighbourhood in the tangent hyperplane $T$. Concretely, if
$\{\tilde{e}_i\}_{i=1}^8$ is an orthonormal basis for $T$ and
$\tilde{e}_9 = \nu(y)$ is the unit normal to $T$, there exists a a
small enough $a = a(y)$ and a smooth function $G: T\cap B_{a}(y)
\rightarrow \real$ so that in a neighbourhood of $x_0'$,
\begin{equation}\label{defGt}
(x', F(x')) = (x_0', F(x_0')) + \sum_{i=1}^8 t_i \tilde{e}_i +
G(t)\tilde{e}_9 \qquad \forall |t|<a.
\end{equation}
Moreover, $G(t)$ satisfies the MSE $H[G]=0$ in $\{|t|<a\}$. In
\cite{PKW} the authors establish the following key estimates for
$G$, which provide the basis for the lemmas, stated above.

\begin{lemma}[cf. Proposition 3.1 and \S 8.1 in \cite{PKW}]\label{TcoorandsizeofG}
Fix $y\in \Gamma$ and let $\rho = 1 + r(y)$. There exists a
 constant $\beta> 0$, independent of $y$, such that the local
representation \eqref{defGt} is defined in a neighbourhood $\{|t| <
a(y)\} \subset T$ with $a(y) = \beta \rho$. Moreover,
\begin{align}\label{Gderivatives}
  |D_t G(t)| \leq \frac{c|t|}{\rho}, \qquad |D_t^k G(t)|& \leq
  \frac{c_k}{\rho^{k-1}} \qquad \text{in} \quad  |t|\leq \beta \rho
\end{align}
for $k\in\nat$ and some numerical constants $c, c_k>0$. Also, the
unit normal $\nu$ to $\Gamma$ doesn't tilt significantly over the
same neighbourhood:
\begin{equation}\label{normtilt}
    |\nu(t,G(t)) - \nu(y)| \leq \frac{c|t|}{\rho} \qquad |t|\leq
\beta \rho.
\end{equation}
\end{lemma}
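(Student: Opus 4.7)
My plan is to prove the lemma by a scaling argument. At the point $y$, the natural length scale is $\rho = 1 + r(y)$, and after dilating space by $\rho^{-1}$ I expect $\Gamma$ to display uniform-in-$y$ geometric bounds in a neighbourhood of the image of $y$; scale-invariance of the MSE combined with interior elliptic regularity then delivers the desired estimates at the original scale.

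Concretely, I would introduce the rescaled minimal hypersurface $\Gamma_{1/\rho} = \rho^{-1}\Gamma$, which contains the point $y/\rho$. The principal curvatures of $\Gamma_{1/\rho}$ at $y/\rho$ are $\rho \cdot k_i(y)$; more importantly, by the Del Pino--Kowalczyk--Wei comparison with the explicit model graph $\Gamma_\infty$ (\cite[\S 3, \S 8]{PKW}), $\Gamma_{1/\rho}$ enjoys \emph{uniform} bounds on its second fundamental form throughout a Euclidean ball of fixed radius around $y/\rho$, independently of the point $y$ and the scale $\rho \ge 1$. This is the crux of the matter, since such a bound does not follow from the pointwise decay $|A|(y) = O(1/\rho)$ alone.

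Armed with the uniform curvature bound, the Implicit Function Theorem produces a universal $\beta > 0$ such that $\Gamma_{1/\rho}$ can be written as the graph of a function $\widetilde{G}_\rho : B_{2\beta} \subset T(y/\rho) \to \real$ with $\widetilde{G}_\rho(0) = 0$ and $D\widetilde{G}_\rho(0) = 0$. The uniform bound $|D^2 \widetilde{G}_\rho| \le C$ keeps the MSE $H[\widetilde{G}_\rho] = 0$ uniformly elliptic with uniformly bounded coefficients on $B_\beta$, so standard interior Schauder estimates yield universal constants $c_k$ with $|D^k \widetilde{G}_\rho| \le c_k$ on $B_\beta$. Defining $G(t) := \rho\,\widetilde{G}_\rho(t/\rho)$ produces the local graph representation \eqref{defGt} on the ball $|t| < \beta \rho$; the scaling $|D_t^k G(t)| = \rho^{1-k}|D^k \widetilde{G}_\rho(t/\rho)|$ yields \eqref{Gderivatives}, with the extra linear factor in the $k=1$ case coming from $D\widetilde{G}_\rho(0) = 0$ and Taylor expansion.

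Finally, the normal-tilt estimate \eqref{normtilt} follows immediately: $\nu(t, G(t))$ is a smooth function of $DG(t)$ whose differential is controlled by $|D^2 G| = O(\rho^{-1})$, and integrating along a segment from $0$ to $t$ gives the claim. The only real obstacle is the uniform curvature control in the second step, which genuinely requires the \cite{PKW} framework comparing $\Gamma$ with $\Gamma_\infty$ at infinity; everything after that is routine scale-invariance and elliptic theory.
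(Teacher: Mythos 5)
Your approach is essentially the one the paper indicates: rescale by $\rho^{-1}$ so the geometry sits at unit scale, exploit a uniform second-fundamental-form bound on a fixed-radius ball, apply the implicit function theorem plus standard MSE/Schauder estimates, and scale back; the $k=1$ and normal-tilt estimates then follow from $D_t G(0)=0$ together with the $k=2$ bound, exactly as you say. One correction on what you call the crux: the uniform curvature control does not come from the $\Gamma_\infty$ comparison but from Simon's estimate \cite[Thm.4, p.673]{Simon} for minimal graphs admitting a cylindrical tangent cone at infinity, which gives $|A|^2(y')\le c/(1+r(y')^2)$ at \emph{every} point of $\Gamma$; since $r(y')\sim\rho$ uniformly over $|t|\le\beta\rho$ for $\beta$ small, this already yields the needed uniform bound after the $\rho^{-1}$ dilation, and the model-graph comparison with $\Gamma_\infty$ enters only later, for the finer asymptotics of Lemma \ref{lemma_close}.
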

The proof of Lemma \ref{TcoorandsizeofG} is based on Simon's
estimate for the second fundamental form of minimal graphs that
admit tangent cylinders at infinity, \cite[Thm.4, p.673]{Simon},
\[
    |A|^2(y) \leq \frac{c}{1 + r(y)^2},
\]
and employs standard MSE estimates applied on an appropriate rescale
of $G$.

Lemma \ref{fermilemma}, the possibility to define the coordinates
\eqref{Fermi} in a thin enough band around $\Gamma$, is a corollary
of \eqref{normtilt}.

\begin{proof}[Proof of Lemma \ref{fermilemma}]
Assume the contrary: that there doesn't exist a $d>0$ for which the
coordinates \eqref{Fermi} are well-defined in
$\mathcal{B}_{\Gamma}(d)$. The coordinates will fail to represent a
point $x\in \mathcal{B}_{\Gamma}(d)$ uniquely when there exist two
points $y_1\neq y_2 \in \Gamma$ such that
\begin{equation*}
    |x-y_1| = |x-y_2| = \text{dist}(x,\Gamma) < d,
\end{equation*}
i.e. if
\begin{equation*}
    x = y_1 + z \nu(y_1) = y_2 + z\nu(y_2).
\end{equation*}
We have $|y_1 - y_2| = |z| |\nu(y_2) - \nu(y_1)| \leq 2 d$. This
means that if $d$ is sufficiently small ($d \leq
\frac{1}{2}\beta(1+r(y_1))$, for example), $y_2$ lies in the portion
of $\Gamma$ which is a graph over $T(y_1) \cap \{|t|\leq \beta
(1+r(y_1))\}$ -- a neighbourhood of the tangent hyperplane at $y_1$.
But then \eqref{normtilt} gives us
\begin{equation*}
    |y_1 - y_2| \leq d |\nu_1 - \nu_2| \leq \frac{cd}{1+ r(y_1)} |y_1 -
y_2| \leq cd |y_1 - y_2|
\end{equation*}
which is impossible whenever $d$ is small enough so that $cd < 1$.
\end{proof}

\begin{proof}[Proof of Lemma \ref{lemma_levelComp}]
Let $\tilde{y}\in \Gamma(z)$, $y = \pi_{\Gamma}(\tilde{y})$ and let
$T=T(y)$ be the tangent hyperplane to $\Gamma$ at $y$.  Use the
Euclidean coordinates $t$ on $T$ to parameterize $\Gamma(z)$ near
$\tilde{y}$:
\begin{equation*}
    t \rightarrow (t, G(t)) + z \nu(y(t)),
\end{equation*}
where $\nu(y(t)) = \frac{(-D_t G, 1)}{\sqrt{1+|D_t G|^2}}$ is the
unit normal to $\Gamma$ at $y(t)=(t,G(t))$. Note that in these
coordinates
\begin{equation*}
    \tilde{f}(t) = f(t).
\end{equation*}
As before, set $\rho = 1 + r(y)$ and use Einstein index notation.
Because of \eqref{Gderivatives}, the metric tensor $g(z)$ on
$\Gamma(z)$ computes to:
\begin{align*}
    g_{ij}(z) & = \delta_{ij} + G_iG_j + z(\tilde{e}_i + G_i \nu)\cdot
\del_j \nu + z(\tilde{e}_j + G_j \nu)\cdot \del_i \nu + z^2 \del_i
\nu\cdot
\del_j \nu \\
& = g_{ij}(0) - 2z G_{ij} + z^2 \del_i \nu \cdot \del_j \nu =
g_{ij}(0) + O(z\rho^{-1}) + O(z^2\rho^{-2})
\end{align*}
while its inverse
\begin{equation*}
    g^{ij}(z) = g^{ij}(0) + O(z\rho^{-1})
\end{equation*}
for $|z|\leq d$ small enough.  Noting that $g_{ij} = g_{ij}(0)$ is
the metric tensor on $\Gamma$ in $t$-coordinates, we see that the
difference of gradients, viewed as vectors in Euclidean space:
\begin{align*}
 & \big|\nabla_{\Gamma(z)}\tilde{f}(\tilde{y}) - \nabla_{\Gamma}f(y)\big| =  \\
=& \big|g^{ij}(z)\del_j f(0)(\tilde{e}_i + G_i(0)\tilde{e}_9 + z
\del_i\nu(0))
- g^{ij}(0) \del_j f(0)(\tilde{e}_i + G_i(0)\tilde{e}_9)\big| = \\
 =& O(z|D_{\Gamma}f(y)|\rho^{-1}).
\end{align*}
The derivatives of the metric tensor satisfy:
\begin{equation*}
    \del_k [g_{ij}(z)] = \del_k g_{ij} + O(z\rho^{-2})
\end{equation*}
so that
\begin{equation*}
    \del_k [g^{ij}(z)] = -g^{il}(z)\del_k [g_{lm}(z)] g^{mj}(z) = \del_k
g^{ij} + O(z\rho^{-1}).
\end{equation*}
Thus,
\begin{align*}
    \Delta_{\Gamma(z)}\tilde{f}(\tilde{y}) & = \frac{1}{\sqrt{|g(z)|}}\del_i(g^{ij}(z) \sqrt{|g(z)|}\del_j
f) =  \\ & = g^{ij}(z) \del^2_{ij} f + \del_ig^{ij}(z) \del_j f +
\frac{g^{ij}(z)}{2} \frac{\del_i |g(z)|}{|g(z)|} \del_j f =\\
& = \Delta_{\Gamma} f(y) + O(z\rho^{-1}|D_{\Gamma}^2 f(y)|) +
O(z\rho^{-2}|D_{\Gamma} f(y)|) .
\end{align*}

\end{proof}

\begin{proof}[Proof of Lemma \ref{lemma_Hlest}]
Set $\rho = 1 + r(y)$ and write the metric tensor of $\Gamma$ around
$y$ in the coordinates $t$, \eqref{defGt}:
\begin{equation*}
    g_{ij} = \delta_{ij} + G_i G_j = \delta_{ij} + O(\beta^2) \qquad
|t|\leq \beta \rho.
\end{equation*}
Its inverse takes the form
\begin{equation*}
    g^{ij} = \delta_{ij} -\frac{G_i G_j}{1+ |\nabla G|^2} =
\delta_{ij} + O(\beta^2) \qquad |t|\leq \beta \rho.
\end{equation*}
Mind that constants in the $O$-notation are independent of $\rho$.
Taking into account \eqref{Gderivatives} we see that for $|t|\leq
\beta \rho,\quad k=0,1,2\ldots$
\begin{equation}\label{tderTensor}
    |D_t^k g_{ij} (t)| \leq \frac{c_k'}{\rho^k} \qquad |D_t^k g^{ij} (t)| \leq
    \frac{c_k''}{\rho^k}.
\end{equation}
An easy consequence is the fact that the intrinsic $k$-th order
derivative of a function $f$ on $\Gamma$ at $y$ will be majorized by
the $t$-derivatives of $f$ at $t=0$ up to order $k$ as follows:
\begin{equation}\label{commensurate}
    |D_{\Gamma}^k f(y)| \leq \sum_{j=1}^k c_j \rho^{j-k} |D_t^j f(0)|
\end{equation}
for some numerical constants $c_k>0$. Since $g_{ij} = O(1)$ and
$g^{ij} = O(1)$, it suffices to show that
\begin{equation}
\begin{aligned}
    \nabla_{I} f & = \del_{I} f + \sum_{|J|<k}
    c_{J}(t) \del_{J} f, \quad \text{where} \\
    |D_t^m c_{J} (t)| & =
    O(\rho^{-k+|J|-m}) \qquad |t|\leq \beta \rho, \qquad m=0,1,\ldots \label{induHl}
\end{aligned}
\end{equation}
Here, of course, $I, J$ denote multi-indices (e.g. if $I = (i_1,
i_2, \ldots, i_k)$, $|I|=k$), $\nabla$ denotes covariant
differentiation and
\begin{align*}
\del_{I} f & = \del^k_{i_1 i
_2\ldots i_k} f \\
 \nabla_{I} f & = (\nabla^k
f)(\del_{i_1}, \del_{i_2}, \cdots, \del_{i_k}).
\end{align*}
We'll prove \eqref{induHl} by induction on $k = |I|$. When $k=1$,
the statement is obviously true and assume it holds up to $k-1$. For
convenience, define the following transformation on multi-indices of
length $k-1$:
\begin{equation*}
    \sigma_l^j(j_1, j_2, \ldots, j_{k-1}) = (j_1, \ldots, j_{l-1}, j,
j_{l+1}, \ldots, j_{k-1}) \qquad 1\leq l \leq k-1.
\end{equation*}
So, if $|I| = k$ and we write $I = (i_1, I')$, $I' = (i_2, \ldots,
i_{k})$, the covariant differentiation rule gives
\begin{align*}
    \nabla_{I} f = \del_{i_1} (\nabla_{I'}f) - \sum_{\substack{1\leq l\leq k-1 \\ 1\leq j\leq 8}}\Gamma_{i_1
    i_{l+1}}^j
    \nabla_{\sigma_l^j(I')} f,
\end{align*}
where $\Gamma_{ij}^k$ are the Christophel symbols for the metric
tensor $g$ in the coordinates $t$. We only need to check $D_t^m
\Gamma_{ij}^k = O(\rho^{-1-m})$, which follows immediately from
\eqref{tderTensor}. The induction step is complete.

Let us apply \eqref{commensurate} to the second fundamental form
\begin{equation*}
    A_i{}^j = A_{ik}g^{kj} = \frac{G_{ik} g^{kj}}{\sqrt{1+|\nabla
G|^2}} = O(\rho^{-1}) \qquad |t|\leq \beta \rho,
\end{equation*}
(we use the Einstein index notation again). Observe that its
$t$-derivatives decay like
\begin{equation*}
    |D_t^m(A_i{}^j)| = O(\rho^{-1-m})\qquad |t|\leq \beta \rho.
\end{equation*}
Since $H_l = \text{Trace}([A_i{}^j]^l)$,
\begin{equation*}
    D_t^m H_l = \text{Trace}(D_t^m [A_i{}^j]^l) = O(\rho^{-m-l})\qquad |t|\leq \beta \rho.
\end{equation*}
Hence, \eqref{commensurate} implies the desired
\begin{equation*}
     |D_{\Gamma}^k H_{l}(y)| \leq \frac{C_{kl}'}{\rho^{l+k}}.
\end{equation*}
\end{proof}

\begin{proof}[Proof of Lemma \ref{lemma_scaling}]
The results in the lemma are obtained after simple length-scale
considerations. Equation \eqref{der_scaling} is immediate. The
principal curvatures $k_i^{\alpha}(y)$ scale like
$\text{distance}^{-1}$, so that
\begin{equation*}
    k_{i}^{\alpha}(y) = \alpha k_{i}(\alpha y) \qquad y\in \Gamma_{\alpha},
\end{equation*}
and thus
\begin{equation}
    H_{l,\alpha}(y) = \alpha^{l}H_{l}(\alpha y) = \alpha^{l} (H_{l})_{\alpha}(y).
\end{equation}
We invoke \eqref{der_scaling} and \eqref{Hl_FullEst} to obtain the
full estimate \eqref{Hl_scaling}.

We are left to check \eqref{GradCompAlpha} and
\eqref{DeltaCompAlpha}, which follow from \eqref{GradComp} and
\eqref{DeltaComp} right after we note that
\begin{align*}
        \big(\nabla_{\Gamma_{\alpha}(z)} \tilde{f}_{\alpha}\big)(y+z\nu_{\alpha}(y)) & = \alpha \big(\nabla_{\Gamma(\alpha
z)}\tilde{f}\big)(\alpha (y + z \nu(y)) \\
    \big(\Delta_{\Gamma_{\alpha}(z)}\tilde{f}_{\alpha}\big)(y+z\nu_{\alpha}(y)) & = \alpha^2 \big(\Delta_{\Gamma(\alpha
z)}\tilde{f}\big)(\alpha (y + z \nu(y)),
\end{align*}
where $\tilde{f} = f \circ \pi_{\Gamma}$ is the lift of $f$ onto
$\Gamma(\alpha z)$.

\end{proof}

\subsection{Proximity between $\Gamma$ and the model graph.}

A more refined knowledge of the asymptotics of $\Gamma$ is needed in
order to carry out the construction of a supersolution to
\eqref{FBP}. To extract better information about geometry of
$\Gamma$ at infinity, Del Pino, Kowalczyk and Wei \cite[\S 2]{PKW}
introduce a model graph $\Gamma_{\infty}$, which has an explicit
formula and which approximates $\Gamma$ very well at infinity.
Namely, the model graph
\begin{equation*}
    \Gamma_{\infty} = \{(x',x_9)\in \real^9: x_9 = F_{\infty}(x')\}
\end{equation*}
where $F_{\infty}:\real^8 \rightarrow \real$ solves the
``homogenized" MSE:
\begin{equation}\label{hMSE}
    \nabla\cdot\left(\frac{\nabla F_{\infty}}{|\nabla F_{\infty}|}\right) =
    0,
\end{equation}
has the same growth ($\sim r^3$) at infinity as $F$ and shares the
same symmetries \eqref{sym}. This determines the function
$F_{\infty}$ uniquely up to a multiplicative constant: if we use
polar coordinates to write
\begin{equation*}
    u = r \cos\theta \qquad v = r\sin\theta,
\end{equation*}
the function takes the form $F_{\infty}(r,\theta) = r^3 g(\theta)$
where $g(\theta)\in C^2[0,\pi/2]$ is the unique (up to a scalar
multiple) solution to
\begin{align*}
& \frac{21 g \sin^2(2\theta)}{\sqrt{9g^2 + g'^2}} + \left(\frac{g'
\sin^3(2\theta)}{\sqrt{9g^2 + g'^2}} \right)' = 0, \qquad \text{in}
\quad \theta\in [0,\frac{\pi}{2}],
\\
& g'(\frac{\pi}{2}) = 0.
\end{align*}
which is \emph{odd} with respect to $\theta = \pi/4$. For
concreteness, we pick the $g(\theta)$ which satisfies in addition $
g'(\frac{\pi}{4}) = 1$.

Del Pino, Kowalczyk and Wei then prove the following result
quantifying the asymptotic proximity between the BdGG graph and the
model graph.

\begin{theorem}[cf. Theorem 2 in \cite{PKW}]\label{ThmRefined}
There exists a function $F=F(u,v)$, an entire solution to the
minimal surface equation \eqref{MSE} which has the symmetries
\eqref{sym} and satisfies
\begin{equation}\label{refine}
    F_{\infty} \leq F \leq F_{\infty} + \frac{C}{r^{\sigma}}\min\{F_{\infty}, 1\} \quad
    \text{in} \quad \theta\in[\frac{\pi}{4},\frac{\pi}{2}], \quad r> R_0
\end{equation}
for some constants $C, R_0 >0$ and $0<\sigma<1$.
\end{theorem}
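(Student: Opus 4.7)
The plan is to reprise the Bombieri--De Giorgi--Giusti (BdGG) construction of an entire minimal graph \cite{BdGG}, but with barriers tailored to the model $F_\infty$, so that the resulting $F$ automatically satisfies the refined sandwich \eqref{refine}. Concretely I would exhibit sub and supersolutions $\underline{F}\leq\overline{F}$ to \eqref{MSE} enjoying the symmetries \eqref{sym}, with $\underline{F}$ equal to $F_\infty$ (or a small downward perturbation of it) and $\overline{F}=F_\infty+\phi$ where $\phi\sim Cr^{-\sigma}\min\{F_\infty,1\}$ outside a large ball, solve the Dirichlet problem on an exhausting sequence of balls with boundary data pinned between $\underline{F}$ and $\overline{F}$, and extract a subsequential limit using the interior gradient estimate for minimal graphs. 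The MSE comparison principle forces $\underline{F}\leq F\leq\overline{F}$, which yields \eqref{refine}, while the odd symmetry $F(u,v)=-F(v,u)$ reduces every check to the fundamental octant $\theta\in[\pi/4,\pi/2]$ where $F_\infty\geq 0$.

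Checking the subsolution condition amounts to computing $H[F_\infty]$. Writing $(1+|\nabla F_\infty|^2)^{-1/2}=|\nabla F_\infty|^{-1}-\tfrac12|\nabla F_\infty|^{-3}+\cdots$ and using \eqref{hMSE}, one finds $H[F_\infty]=-\tfrac12\,\nabla\cdot(|\nabla F_\infty|^{-3}\nabla F_\infty)+(\text{higher order})$, which is of size $r^{-s}$ for some explicit $s>0$ computable from $F_\infty=r^3 g(\theta)$, with a definite sign in each region of the octant (if unfavorable, one instead takes $\underline{F}=F_\infty-\psi$ for a small positive correction of strictly lower order than the allowed $Cr^{-\sigma}F_\infty$). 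For the supersolution, expanding
\begin{equation*}
H[F_\infty+\phi]=H[F_\infty]+H'[F_\infty]\phi+Q(\phi,\nabla\phi,D^2\phi),
\end{equation*}
with $H'[F_\infty]$ the linearized mean-curvature operator around $\Gamma_\infty$, reduces the task to solving the linear inequality $H'[F_\infty]\phi\leq -H[F_\infty]-Q$ for a positive $\phi$ of the prescribed decay. The positive separated-variable solutions of the form $\phi=r^{-\sigma}\psi(\theta)$ in the regime $\{F_\infty\geq 1\}$ and $\phi=Cr^{-\sigma}F_\infty$ near the Simons cone are delivered by the explicit analysis of $H'[F_\infty]$ carried out in \cite[\S 7]{PKW}, specifically Lemmas 7.2 and 7.3 as recorded in Appendix \ref{PKWsupsinfinity}, valid for every sufficiently small $\sigma\in(0,1)$; choosing $C$ large absorbs the quadratic remainder $Q$, which is of strictly lower order than the linear term once $r\geq R_0$ with $R_0$ large.

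The principal obstacle is the transition layer $\{F_\infty\sim 1\}$, where $\min\{F_\infty,1\}$ is only Lipschitz and the two regimes of $\overline{F}$ have to be patched together: there I would smooth $\phi$ on a scale of size $O(1)$ and verify the supersolution inequality in the viscosity sense, using the strict margin one has in each regime and the continuity of $H'[F_\infty]$ across the patching locus. Once the asymptotic pair $(\underline{F},\overline{F})$ is built on $\{r>R_0\}$, I would splice it with the original BdGG compactly supported barriers on $\{r\leq R_0\}$---adjusting constants so that the orderings at the interface $\{r=R_0\}$ are compatible---and run the classical Perron/Dirichlet argument on $B_R\cap\{\theta\in[\pi/4,\pi/2]\}$ with the symmetry boundary conditions on $\{\theta=\pi/4\}$ and $\{\theta=\pi/2\}$. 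Letting $R\to\infty$ and invoking interior $C^{2,\alpha}$ estimates for minimal graphs then extracts the desired entire minimal graph $F$ satisfying \eqref{refine}.
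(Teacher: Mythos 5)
This paper does not prove Theorem~\ref{ThmRefined}; it is quoted verbatim (with attribution) from \cite[Theorem 2]{PKW}, and its role here is purely as an imported input. So the only meaningful comparison is with the argument in \cite{PKW} itself, and there your sketch has the right general shape (barriers tailored to $F_\infty$, Dirichlet problems on exhausting domains, comparison principle for the MSE, interior gradient estimates for minimal graphs) but a concrete gap at its centerpiece.

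The gap is in the claim that the supersolution correction $\phi$ is ``delivered by the explicit analysis of $H'[F_\infty]$ carried out in \cite[\S 7]{PKW}, specifically Lemmas 7.2 and 7.3.'' Those two lemmas (recorded in Appendix~\ref{PKWsupsinfinity}) serve a different purpose in both \cite{PKW} and this paper --- they feed the Jacobi-operator barriers of Propositions~\ref{h'} and~\ref{h"}, used much later to control $J_{\Gamma}$ --- and the functions they produce do not have the decay/symmetry required here. The Type~1 function is $\phi_1=r^{-\eps}q_1(\theta)$ with $q_1$ positive and \emph{even} about $\theta=\pi/4$; adding it to $F_\infty$ destroys the odd symmetry $F(u,v)=-F(v,u)$ and, being bounded below along $\theta=\pi/4$, utterly fails the bound $\phi\le Cr^{-\sigma}\min\{F_\infty,1\}$ near the Simons cone. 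The Type~2 function is $\phi_2=rq_2(\theta)$ with $q_2(\theta)\sim(\theta-\pi/4)^{\tau}$, $\tau\in(\tfrac13,\tfrac23)$; comparing with $Cr^{-\sigma}F_\infty\sim Cr^{3-\sigma}(\theta-\pi/4)$, one finds $\phi_2/(Cr^{-\sigma}F_\infty)\sim r^{\sigma-2}(\theta-\pi/4)^{\tau-1}\to\infty$ as $\theta\to\pi/4$, so again the refined upper bound in \eqref{refine} would be violated near the cone. What is actually needed is a correction that vanishes \emph{linearly} in $\theta-\pi/4$ and is odd about $\theta=\pi/4$ (so that $\overline{F}$ keeps the symmetries \eqref{sym}); this has to be built directly as a supersolution for the nonlinear MSE, not borrowed from the Jacobi-operator lemmas, and that construction is the real content of \cite[\S 2]{PKW}.

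A secondary problem: your contingency plan ``if the sign of $H[F_\infty]$ is unfavorable, take $\underline{F}=F_\infty-\psi$'' would establish $F\ge F_\infty-\psi$, which is strictly weaker than the lower bound $F_\infty\le F$ asserted in \eqref{refine}. The theorem as stated requires $F_\infty$ itself to be a subsolution on $\theta\in[\pi/4,\pi/2]$ for large $r$; this sign must be verified, not sidestepped. Finally, the patching of the two barrier regimes across $\{F_\infty\sim 1\}$ (a thin sector $|\theta-\pi/4|\sim r^{-3}$) needs more than an $O(1)$ mollification in the value of $F_\infty$: the two candidate supersolutions must actually be ordered on the overlap so that a $\min$-type gluing remains a viscosity supersolution, exactly the delicate check this paper performs in the analogous Lemma~\ref{h"_infty} for the Jacobi operator.
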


The proximity of $\Gamma$ and $\Gamma_{\infty}$ at infinity allows
one to approximate geometric data and geometric operators defined on
the non-explicit $\Gamma$ with their counterparts on the explicit
$\Gamma_{\infty}$. To put it more concretely: one can use the
orthogonal projection $\pi_{\Gamma}$ onto $\Gamma$ to identify
functions $f$ defined on $\Gamma$ with functions $f_{\infty}$
defined on $\Gamma_{\infty}$ far away from the origin:
\begin{equation}\label{lifttomodel}
    f_{\infty} = f \circ \pi_{\Gamma} \qquad \Gamma_{\infty} \cap \{r > R\}
\end{equation}
for $R>0$ large enough. Then one can compare $(\nabla_{\Gamma}f
)\circ\pi_{\Gamma}$ to $\nabla_{\Gamma_{\infty}} f_{\infty}$ and
$(\Delta_{\Gamma} f)\circ\pi_{\Gamma}$ to
$\Delta_{\Gamma_{\infty}}f_{\infty}$. Also, if we denote
$\{(k_{\infty,i}\}_{i=1}^8$ to be the principal curvatures of
$\Gamma_{\infty}$, and
\begin{equation*}
    H_{\infty,l} = \sum_{i=1}^8 (k_{\infty,i})^l,
\end{equation*}
one can use the explicit $H_{\infty,l}$ to approximate the curvature
quantities $H_l$ associated with $\Gamma$. The ultimate goal is to
approximate the Jacobi operator on $\Gamma$,
\begin{equation*}
    J_{\Gamma} f = (\Delta_{\Gamma} + H_2)f
\end{equation*}
asymptotically with the Jacobi operator on $\Gamma_{\infty}$,
\begin{equation*}
    J_{\Gamma_{\infty}} f_{\infty} = (\Delta_{\Gamma_{\infty}} +
    H_{\infty,2})f_{\infty}.
\end{equation*}

On the basis of Theorem \ref{ThmRefined}, Del Pino, Kowalczyk and
Wei establish the following list of results.
\begin{lemma}[cf. \S 8.2, 8.3 in \cite{PKW}]\label{lemma_close} Far enough away from the origin,
$r>R$, for some constant $0<\sigma <1$,
\begin{itemize}
\item One can express $\Gamma_{\infty}$ locally as a graph of a function $G_{\infty}(t)$ over a neighbourhood of the
tangent hyperplane $T=T(y)$ to $y\in \Gamma$ with $r(y)>R$.
Moreover, for some constants $C_k>0$, $k=0,1,\ldots$
\begin{equation*}
    |\left.D_t^k (G-G_{\infty})\right|_{t=0}| \leq \frac{C_k}{r(y)^{k+1+\sigma}},
\end{equation*}
where $(t, G(t))$ is the local parametrization \eqref{defGt} of
$\Gamma$.

\item The Laplace-Beltrami operator on $\Gamma$
can be approximated with the Laplace-Beltrami operator on
$\Gamma_{\infty}$ as follows:
\begin{equation}\label{lapBelt_comparison}
    (\Delta_{\Gamma}f)\circ\pi_{\Gamma} = \Delta_{\Gamma_{\infty}}f_{\infty} +
    O\left(r^{-2-\sigma}|D_{\Gamma_{\infty}}^2 f_{\infty}| +
    r^{-3-\sigma} |D_{\Gamma_{\infty}} f_{\infty}|\right),
\end{equation}
where $f$ and $f_{\infty}$ are related via \eqref{lifttomodel}.
\item The quantities $H_2=|A|^2$ and $H_3$ of $\Gamma$ are
approximated by $H_{\infty, 2}$ and $H_{\infty,3}$, respectively, as
follows:
\begin{align}
    H_2\circ\pi_{\Gamma} & = H_{\infty,2} + O(r^{-4-\sigma}) \label{curv2clo}\\
    H_3\circ \pi_{\Gamma} & = H_{\infty,3} + O(r^{-5-\sigma}). \label{curv3clo}
\end{align}

\item Therefore, the Jacobi operators on $\Gamma$ and
$\Gamma_{\infty}$ are related by
\begin{equation}\label{Jacobiclo}
    (J_{\Gamma}f)\circ\pi_{\Gamma} = J_{\Gamma_{\infty}}f_{\infty} + O\left(r^{-2-\sigma}|D_{\Gamma_{\infty}}^2 f_{\infty}| +
    r^{-3-\sigma} |D_{\Gamma_{\infty}} f_{\infty}| +
    r^{-4-\sigma}|f_{\infty}|\right).
\end{equation}

\end{itemize}
\end{lemma}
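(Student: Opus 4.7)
The plan is to reduce everything to a $C^k$-comparison of the local graph representations of $\Gamma$ and $\Gamma_{\infty}$ over a common tangent hyperplane; once that is in place, the Laplace--Beltrami, curvature and Jacobi comparisons follow from calculations parallel to those in the proofs of Lemmas \ref{lemma_levelComp} and \ref{lemma_Hlest}.

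I would fix $y\in\Gamma$ with $r(y)\geq R \gg 1$, let $T=T(y)$ be the tangent hyperplane, and set $\rho=1+r(y)$. Lemma \ref{TcoorandsizeofG} already provides $\Gamma$ as the graph of some $G$ over $T\cap\{|t|<\beta\rho\}$ with $|D_t^k G|\leq c_k \rho^{-(k-1)}$. To set up the analogous representation of $\Gamma_{\infty}$, I would fix $t\in T$ and look for the intersection of the line $\{y+t+s\nu(y): s\in\real\}$ with $\Gamma_{\infty}$. Using $\nu(y)\cdot e_9 \sim \rho^{-2}$, $|\nabla F|\sim \rho^2$, and $|F-F_{\infty}|=O(\rho^{-\sigma})$ from Theorem \ref{ThmRefined}, one checks that for $|t|<\beta\rho$ and $R$ large enough the line meets $\Gamma_{\infty}$ transversely in a unique nearby point; the Implicit Function Theorem then produces a smooth $G_{\infty}:T\cap\{|t|<\beta\rho\}\to\real$ representing $\Gamma_{\infty}$, and $G_{\infty}$ inherits bounds $|D_t^k G_{\infty}|\leq c_k' \rho^{-(k-1)}$ by the same rescaling argument used for $G$.

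The pointwise estimate $|G(0)-G_{\infty}(0)|=O(\rho^{-1-\sigma})$ then follows from $G(0)=0$ together with the line--graph computation. For higher order derivatives at $t=0$ I would exploit that both $G$ and $G_{\infty}$ satisfy quasilinear elliptic equations --- the minimal surface equation $H[G]=0$ and the homogenized minimal surface equation coming from \eqref{hMSE} --- whose coefficients depend smoothly on $\nabla G$ and $\nabla G_{\infty}$, both uniformly bounded on $\{|t|<\beta\rho\}$. After the scaling $\hat t = t/\rho$, these become uniformly elliptic equations on the unit ball, and the difference $G-G_{\infty}$ satisfies a linear elliptic equation whose right-hand side collects both the $C^0$ difference and the discrepancy between the two operators. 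Standard interior Schauder estimates, combined with the $C^0$ bound and the $\rho^{-\sigma}$ decay of the operator mismatch, then yield $|D_t^k(G-G_{\infty})|_{t=0}\leq C_k \rho^{-1-\sigma-k}$. This is the main technical step, essentially the content of \cite[\S 8.2]{PKW}, and the one place where the quantitative Theorem \ref{ThmRefined} is really needed.

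The remaining three bullets are then computational. The metric tensors $g_{ij}=\delta_{ij}+G_iG_j$, their inverses, and their $t$-derivatives differ from their $\Gamma_{\infty}$-counterparts by $O(\rho^{-1-\sigma-m})$ at $m$-th order in $t$. Expanding $(\Delta_{\Gamma}f)\circ\pi_{\Gamma}-\Delta_{\Gamma_{\infty}}f_{\infty}$ in tangent-plane coordinates exactly as in the proof of Lemma \ref{lemma_levelComp} produces the claimed error $O(\rho^{-2-\sigma}|D_{\Gamma_{\infty}}^2 f_{\infty}|+\rho^{-3-\sigma}|D_{\Gamma_{\infty}} f_{\infty}|)$ of \eqref{lapBelt_comparison}. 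For the curvatures, $A_i{}^j=G_{ik}g^{kj}/\sqrt{1+|\nabla G|^2}$ and its $\Gamma_{\infty}$-analogue satisfy $|A-A_{\infty}|=O(\rho^{-3-\sigma})$ while each of $A,A_{\infty}$ is of size $\rho^{-1}$; expanding $H_l=\mathrm{Trace}([A_i{}^j]^l)$ as a polynomial of degree $l$ in $A$ gives $|H_l-H_{\infty,l}|=O(\rho^{-l-2-\sigma})$, which for $l=2,3$ is exactly \eqref{curv2clo}--\eqref{curv3clo}. Finally, \eqref{Jacobiclo} is obtained by combining \eqref{lapBelt_comparison} with the $H_2$-comparison \eqref{curv2clo} in $J_{\Gamma}f=\Delta_{\Gamma}f+H_2 f$, the $O(\rho^{-4-\sigma}|f_{\infty}|)$ term absorbing the curvature mismatch.
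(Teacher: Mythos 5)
The paper does not actually prove Lemma \ref{lemma_close}; it is cited directly from \cite{PKW} (\S 8.2--8.3), introduced by the sentence ``On the basis of Theorem \ref{ThmRefined}, Del Pino, Kowalczyk and Wei establish the following list of results.'' So there is no internal proof in this paper for your proposal to be matched against, only an attribution. Your outline --- represent both $\Gamma$ and $\Gamma_\infty$ as graphs over the common tangent plane $T(y)$, establish $C^0$ proximity from Theorem \ref{ThmRefined}, upgrade to $C^k$ via rescaled Schauder, then redo the calculations of Lemmas \ref{lemma_levelComp} and \ref{lemma_Hlest} --- is the natural route and very likely reflects the content of \cite[\S 8.2--8.3]{PKW}.

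One step as written needs to be repaired. You propose that $G_\infty$ satisfies ``the homogenized minimal surface equation coming from \eqref{hMSE},'' that after rescaling this becomes ``a uniformly elliptic equation,'' and then apply Schauder to the difference. Neither claim quite holds: the homogenized operator $\nabla\cdot\big(\nabla\cdot/|\nabla\cdot|\big)$ is $0$-homogeneous in the gradient and hence degenerate elliptic, and in any case it is satisfied by $F_\infty$ as a \emph{vertical} graph, not by $G_\infty$ as a graph over the tilted plane $T(y)$ --- the equation is not form-invariant under tilting. The remedy is to drop the homogenized equation and apply the honest mean-curvature operator $H[\cdot]$ to both graph representations. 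Then $H[G]=0$ exactly, while $H[G_\infty]$ equals the mean curvature of $\Gamma_\infty$ at the corresponding point, which the explicit formula $F_\infty=r^3g(\theta)$ shows to be $O(\rho^{-5})$ (this is precisely the computation carried out in Appendix~\ref{append1}). Thus $v=G-G_\infty$ satisfies a \emph{uniformly} elliptic linear equation, obtained by differencing $H[G]$ and $H[G_\infty]$, with bounded-gradient coefficients and source $-H[G_\infty]=O(\rho^{-5})$; after the rescaling $\hat t=t/\rho$ the source becomes $O(\rho^{-4})$, negligible next to the $O(\rho^{-2-\sigma})$ rescaled $C^0$ bound, so interior Schauder gives $|D_t^kv(0)|=O(\rho^{-k-1-\sigma})$ as claimed. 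With this substitution the remainder of your argument goes through.
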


\section{Construction of the super and subsolution.}\label{TheMeat}

\subsection{The ansatz.}

Define the $L^{\infty}$ weighted norms
\begin{equation*}
 \|f\|_{k,L^{\infty}(\Omega)} = \|(1+r(y)^k)
f(y)\|_{L^{\infty}(\Omega)}
\end{equation*}
for regions $\Omega\subseteq\Gamma$. Use the short-hand
$\|\cdot\|_{k,\infty}$ when $\Omega = \Gamma$.

Recall that for $\alpha>0$ small enough the coordinates
\eqref{Fermialpha} are well-defined in the band
$\mathcal{B}_{\Gamma_{\alpha}} = \mathcal{B}_{\Gamma_{\alpha}}(2)$.
We will work with the following ansatz $w:
\mathcal{B}_{\Gamma_{\alpha}} \rightarrow \real$:
\begin{equation}\label{ansatz}
    w(y,z) = h_0^{\alpha}(y) + z h_1^{\alpha}(y) + z^2 h_2^{\alpha}(y) + z^3 h_3^{\alpha}(y)
    +z^4  h_4^{\alpha}(y) + z^5 h_5^{\alpha}(y),
\end{equation}
where $h_i^{\alpha} = h_i^{\alpha}(y)$ are functions on
$\mathcal{B}_{\Gamma_{\alpha}}$, independent of the $z$-variable.
The coefficients $h_1^{\alpha}, h_3^{\alpha}, h_5^{\alpha}$ are
explicitly specified in terms of geometric quantities associated
with $\Gamma_{\alpha}$:
\begin{align}
  & h_1^{\alpha} = 1 - \frac{|A_{\alpha}|^2}{2} +  h_1'^{\alpha} \qquad h_1'^{\alpha} = - \frac{5}{24} (\Delta_{\Gamma_{\alpha}} + |A_{\alpha}|^2)|A_{\alpha}|^2 - \frac{H_{4,\alpha}}{4} \notag \\
  & h_3^{\alpha} = \frac{1}{6}(|A_{\alpha}|^2 + h_3'^{\alpha}) \qquad
  h_3'^{\alpha} = \frac{1}{2}(\Delta_{\Gamma_{\alpha}}|A_{\alpha}|^2 - |A_{\alpha}|^4)
  \\
  & h_5^{\alpha} = \frac{1}{20} \Big(\frac{|A_{\alpha}|^4}{2}+H_{4,\alpha} - \frac{\Delta_{\Gamma_{\alpha}}|A_{\alpha}|^2}{6}
  \Big).\notag
\end{align}
According to \eqref{Hl_scaling} of Lemma \ref{lemma_scaling}, the
size of $h = (h_1^{\alpha}-1), h_3^{\alpha}$ and their covariant
derivatives up to second order on $\Gamma$,
\begin{equation}\label{h1-1h3}
h = O\Big(\frac{\alpha^2}{1+(\alpha r)^2}\Big),\quad
|D_{\Gamma_{\alpha}}h| = O\Big(\frac{\alpha^3}{1+(\alpha
    r)^3}\Big), \quad |D^2_{\Gamma_{\alpha}}h| = O\Big(\frac{\alpha^4}{1+(\alpha
    r)^4}\Big),
\end{equation}
while for $h=h_1'^{\alpha}, h_3'^{\alpha}, h_5^{\alpha}$
\begin{equation}\label{h1'h3'h5}
    h = O\Big(\frac{\alpha^4}{1+(\alpha r)^4}\Big),\quad |D_{\Gamma_{\alpha}}h| = O\Big(\frac{\alpha^5}{1+(\alpha
    r)^5}\Big), \quad |D^2_{\Gamma_{\alpha}}h| = O\Big(\frac{\alpha^6}{1+(\alpha
    r)^6}\Big).
\end{equation}

We set $h_4^{\alpha} = 0$. The coefficients $h_0^{\alpha}$ and
$h_2^{\alpha}$ will be specified later so that the ansatz meets the
supersolution conditions in Definition \ref{defClaSuper}, but from
the very start we will require that they satisfy the following
properties:
\begin{itemize}
\item $h_0^{\alpha}>0$ is \emph{strictly positive} and scales like
\begin{equation*}
    h_{0}^{\alpha}(y) = \alpha^p h_0(\alpha y),
\end{equation*}
where $0<p < 1$ and $h_0\in C^{2}(\Gamma)$ is positive with
\begin{equation}\label{sizeh0}
    \|D^2_{\Gamma}h_0\|_{3,\infty}  + \|D_{\Gamma}h_0\|_{2,\infty} + \|h_0\|_{1,\infty} \leq C_1.
\end{equation}
for some positive constant $C_1$. So,
\begin{equation*}
    h_0^{\alpha} = O\Big(\frac{\alpha^{p}}{1+(\alpha r)}\Big),\quad
|D_{\Gamma_{\alpha}}h_0^{\alpha}| =
O\Big(\frac{\alpha^{1+p}}{1+(\alpha
    r)^2}\Big), \quad |D^2_{\Gamma_{\alpha}}h_0^{\alpha}| = O\Big(\frac{\alpha^{2+p}}{1+(\alpha
    r)^3}\Big).
\end{equation*}

\item $h_2^{\alpha}$ equals 
\begin{equation*}
    h_2^{\alpha} = \frac{1}{2}(|A_{\alpha}|^2h_0^{\alpha} +
    h_2'^{\alpha}),
\end{equation*}
where the correction $h_2'^{\alpha}$ scales like
\begin{equation*}
    h_2'^{\alpha}(y) = \alpha^{2+p}h_2'(\alpha y)
\end{equation*}
and $h_2'\in C^2(\Gamma)$ is \emph{positive} with
\begin{equation}\label{sizeh2'}
    \|D^2_{\Gamma}h_2'\|_{5,\infty}  + \|D_{\Gamma}h_2'\|_{4,\infty} + \|h_2'\|_{3,\infty}\leq C_2,
\end{equation}
for some positive constant $C_2$. Thus,
\begin{equation*}
    h_2^{\alpha} = O\Big(\frac{\alpha^{2+p}}{1+(\alpha r)^3}\Big),\quad
|D_{\Gamma_{\alpha}}h_2^{\alpha}| =
O\Big(\frac{\alpha^{3+p}}{1+(\alpha
    r)^4}\Big), \quad |D^2_{\Gamma_{\alpha}}h_2^{\alpha}| = O\Big(\frac{\alpha^{4+p}}{1+(\alpha
    r)^5}\Big).
\end{equation*}

\end{itemize}

\begin{remark}\label{forcedchoi}
At first look, the choices for $h^{\alpha}_{i}$ above may seem
somewhat arbitrary but they are prompted by the supersolution
conditions. The fact that we expect the solution to behave
asymptotically like $z$ suggests that $h_1^{\alpha} \approx 1$ to
main order. Thus, the main order term in
$H_{\Gamma_{\alpha}(z)}\del_z w$ is $ z |A_{\alpha}|^2$ which has to
be cancelled by the $z^1$-term in $\del^2_z w$: thus, $h_3 \approx
\frac{|A_{\alpha}|^2}{6}$. Now $\del_z w \approx h_1^{\alpha} +
z^2\frac{|A_{\alpha}|^2}{2}$ and since $w$ achieves values $\pm 1$
at $z\approx \pm 1$ and $|\nabla w| \approx \del_z w$, the
supersolution gradient condition demands that we refine
$h_1^{\alpha}$ to equal $h_1^{\alpha}\approx 1 -
\frac{|A_{\alpha}|^2}{2}$. The form of $h_2^{\alpha}$ is contingent
upon the fact that $w (y, \cdot)$ attains the values $\pm 1$
asymptotically at $z_{\pm} \approx \pm 1 - h_0^{\alpha}$, so that
\[
\del_z w(y, z_{\pm}) \approx 1 \pm( 2h_2^{\alpha} - |A_{\alpha}|^2
h_0^{\alpha}),
\]
requiring the positivity of $h_2'^{\alpha} = 2h_2^{\alpha} -
|A_{\alpha}|^2 h_0^{\alpha}$. The remaining choices (and further
refinements) are made so that no terms that decay at a rate $r^{-4}$
(and no better) at infinity are present in the expansions of $\Delta
w$ or $\del_z w(y,z_{\pm})$.

All this will become transparent once we carry out the computations
of the Laplacian of $w$ in Lemma \ref{lemma_lap} and of the gradient
of $w$ on $\{w=\pm 1\}$ in Lemmas \ref{lemma_dzw} and \ref{supGrad}
below.
\end{remark}

\paragraph{NB} In what follows the constants in the $O$-notation
depend solely on $p$, $C_1$, $C_2$ and the minimal graph $\Gamma$,
but \textbf{not} on the scaling parameter $\alpha$.

\begin{lemma}\label{lemma_lap} The Laplacian of $w$ in
$\mathcal{B}_{\Gamma_{\alpha}}$ can be estimated by
\begin{align}
\Delta w(y,z) & = (\Delta_{\Gamma_{\alpha}(z)} +
|A_{\alpha}|^2)h_0^{\alpha} + h_2'^{\alpha}- z^2H_{3,{\alpha}} +
O\Big(\frac{\alpha^{4+p}}{1+(\alpha r)^5}\Big) \label{lap_ansatz}.
\end{align}
\end{lemma}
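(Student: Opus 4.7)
The strategy is to compute $\Delta w$ directly from the Fermi-coordinate splitting
\[
\Delta = \Delta_{\Gamma_{\alpha}(z)} + \partial_z^2 - H_{\Gamma_{\alpha}(z)}\partial_z,
\]
combined with the series $H_{\Gamma_{\alpha}(z)} = \sum_{l\geq 2} z^{l-1}H_{l,\alpha}$ from \eqref{Hgammaz_series}. Since the coefficients $h_k^{\alpha}$ are $z$-independent, all three pieces of $\Delta w$ become (formal) power series in $z$ whose coefficients are built out of $H_{l,\alpha}$ and the $h_k^{\alpha}$. The tail $\sum_{l\geq 6} z^{l-1}H_{l,\alpha}$ is controlled by the geometric smallness $|k_i^{\alpha} z| = O(\alpha/(1+\alpha r))$ and contributes only $O(\alpha^6/(1+\alpha r)^6)$, safely within the error budget. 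The plan is then to read off the coefficient of each $z^k$, $k=0,\ldots,5$, in $\Delta w$, check that the designed choices of $h_1^{\alpha}, h_2^{\alpha}, h_3^{\alpha}, h_5^{\alpha}$ produce the required cancellations, and verify that every remainder fits inside $O(\alpha^{4+p}/(1+\alpha r)^5)$ after multiplication by $z^k$ with $|z|\leq 2$.

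At $z^0$, the coefficient $\Delta_{\Gamma_{\alpha}(z)}h_0^{\alpha}+2h_2^{\alpha}$ reproduces the main term $(\Delta_{\Gamma_{\alpha}(z)}+|A_{\alpha}|^2)h_0^{\alpha}+h_2'^{\alpha}$ exactly, since by definition $2h_2^{\alpha}=|A_{\alpha}|^2 h_0^{\alpha}+h_2'^{\alpha}$. At $z^1$, the coefficient is $\Delta_{\Gamma_{\alpha}(z)}h_1^{\alpha}+6h_3^{\alpha}-|A_{\alpha}|^2 h_1^{\alpha}$; substituting the explicit formulas for $h_1^{\alpha}, h_3^{\alpha}$ together with the identity $h_3'^{\alpha}=\frac{1}{2}(\Delta_{\Gamma_{\alpha}}|A_{\alpha}|^2-|A_{\alpha}|^4)$, the leading $|A_{\alpha}|^2$, $|A_{\alpha}|^4$ and $\Delta_{\Gamma_{\alpha}}|A_{\alpha}|^2$ terms cancel in pairs, leaving only cross terms of size $O(\alpha^5/(1+\alpha r)^5)$ controlled by \eqref{h1-1h3}, \eqref{h1'h3'h5}. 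At $z^2$, the piece $-H_{3,\alpha}h_1^{\alpha}$ supplies the main term $-H_{3,\alpha}$, while $\Delta_{\Gamma_{\alpha}}h_2^{\alpha}$, $|A_{\alpha}|^2 h_2^{\alpha}$ and $H_{3,\alpha}(h_1^{\alpha}-1)$ are all $O(\alpha^{4+p}/(1+\alpha r)^5)$ by \eqref{h1-1h3}, \eqref{sizeh0} and \eqref{sizeh2'}. At $z^3$, the bracket $\Delta_{\Gamma_{\alpha}}h_3^{\alpha}+20h_5^{\alpha}-H_{4,\alpha}h_1^{\alpha}-2H_{3,\alpha}h_2^{\alpha}-3|A_{\alpha}|^2 h_3^{\alpha}$ simplifies because the choice $20h_5^{\alpha}=|A_{\alpha}|^4/2+H_{4,\alpha}-\Delta_{\Gamma_{\alpha}}|A_{\alpha}|^2/6$ is engineered precisely to annihilate the $H_{4,\alpha}$, $|A_{\alpha}|^4$ and $\Delta_{\Gamma_{\alpha}}|A_{\alpha}|^2$ contributions coming from the other terms, leaving only $O(\alpha^{5+p}/(1+\alpha r)^6)$. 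At $z^4$ and $z^5$ every surviving piece is a product $H_{k,\alpha}h_j^{\alpha}$ with $k+j\geq 5$, bounded by $O(\alpha^5/(1+\alpha r)^5)$.

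The only subtlety is the replacement of $\Delta_{\Gamma_{\alpha}(z)}h_k^{\alpha}$ by $\Delta_{\Gamma_{\alpha}}h_k^{\alpha}$ for $k=1,2,3,5$: estimate \eqref{DeltaCompAlpha}, combined with the derivative bounds in \eqref{h1-1h3}, \eqref{h1'h3'h5} and \eqref{sizeh2'}, produces an error that gains a factor of $\alpha/(1+\alpha r)$ over $\Delta_{\Gamma_{\alpha}}h_k^{\alpha}$ itself, and is cleanly absorbed into $O(\alpha^{4+p}/(1+\alpha r)^5)$ after multiplication by $z^k$. This replacement is not acceptable at $k=0$---the cost would exceed the error budget by one power of $\alpha$---which is precisely why $\Delta_{\Gamma_{\alpha}(z)}$, rather than $\Delta_{\Gamma_{\alpha}}$, is kept in the main term. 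The main obstacle is therefore purely combinatorial: the bookkeeping required to confirm, power by power, that the algebraic choices in the ansatz produce exactly the cancellations anticipated in Remark \ref{forcedchoi}, so that after summing, all residues collapse to $O(\alpha^{4+p}/(1+\alpha r)^5)$ as claimed.
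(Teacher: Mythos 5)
Your proposal is correct and proceeds by essentially the same route as the paper: both expand the Euclidean Laplacian via the Fermi-coordinate decomposition $\Delta = \Delta_{\Gamma_{\alpha}(z)} + \partial_z^2 - H_{\Gamma_{\alpha}(z)}\partial_z$, expand $H_{\Gamma_{\alpha}(z)}$ as a power series in $z$, and then collect terms by power of $z$, using the engineered identities for $h_1^{\alpha}, h_3^{\alpha}, h_5^{\alpha}$ to cancel the $z^1$ and $z^3$ coefficients and the decay estimates \eqref{h1-1h3}, \eqref{h1'h3'h5}, \eqref{sizeh0}, \eqref{sizeh2'}, \eqref{DeltaCompAlpha} to absorb the rest into $O\big(\alpha^{4+p}/(1+(\alpha r))^5\big)$. (Two inconsequential slips: the $z^2$ coefficient carries $2|A_{\alpha}|^2 h_2^{\alpha}$ rather than $|A_{\alpha}|^2 h_2^{\alpha}$, and the $z^5$ coefficient also contains $\Delta_{\Gamma_{\alpha}(z)}h_5^{\alpha}$, not only products $H_{k,\alpha}h_j^{\alpha}$; neither changes the order of the remainder.)
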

\begin{proof}
Compute in succession:
\begin{align}
    \del_z w & = 1 - \frac{|A_{\alpha}|^2}{2} + h_1'^{\alpha} + z(|A_{\alpha}|^2 h_0^{\alpha} +
    h_2'^{\alpha})+ z^2 \frac{|A_{\alpha}|^2 + h_3'^{\alpha}}{2} +
    5z^4 h_5^{\alpha} \label{delz}\\
    \del_z^2 w & = |A_{\alpha}|^2 h_0^{\alpha} + h_2'^{\alpha}+ z (|A_{\alpha}|^2 + h_3'^{\alpha}) +
    20 z^3 h_5^{\alpha} \label{del2z} \\
    H_{\Gamma_{\alpha}(z)} \del_z w & =  \left(z |A_{\alpha}|^2 + z^2 H_{3, \alpha} + z^3 H_{4, \alpha} +
O\Big(z^4\frac{\alpha^5}{1+(\alpha r)^5}\Big)\right) \del_z w = \notag \\
& = z \left(|A_{\alpha}|^2 - \frac{|A_{\alpha}|^4}{2} +
O\Big(\frac{\alpha^6}{1+ (\alpha r)^6}\Big)\right) +
z^2\left(H_{3,\alpha} + O\Big(\frac{\alpha^{4+p}}{1 + (\alpha
r)^5}\Big)\right)  \label{Hdelz} \\
& + z^3 \left(\frac{|A_{\alpha}|^4}{2}+ H_{4,\alpha} +
O\Big(\frac{\alpha^{5+p}}{1 + (\alpha r)^{6}}\Big)\right) +
O\Big(z^4 \frac{\alpha^{5}}{1+ (\alpha r)^5}\Big) \notag
\end{align}
Because of \eqref{DeltaCompAlpha} and \eqref{h1'h3'h5},
\begin{align*}
    |\Delta_{\Gamma_{\alpha(z)}} h_1'^{\alpha}| + & |\Delta_{\Gamma_{\alpha(z)}}
    h_3'^{\alpha}| + |\Delta_{\Gamma_{\alpha(z)}} h_5^{\alpha}|  =
    O\Big(\frac{\alpha^6}{1+(\alpha r)^6}\Big) \\
    &\Delta_{\Gamma_{\alpha(z)}} h_2^{\alpha} = O\Big(\frac{\alpha^{4+p}}{1+(\alpha
    r)^5}\Big) \\
    &\Delta_{\Gamma_{\alpha(z)}} |A_{\alpha}|^2  = \Delta_{\Gamma_{\alpha}}
    |A_{\alpha}|^2 + O\Big(z \frac{\alpha^5}{1 + (\alpha r)^5}\Big)
\end{align*}
so that
\begin{equation}\label{lap}
    \Delta_{\Gamma_{\alpha}(z)} w =
    \Delta_{\Gamma_{\alpha(z)}}h_0^{\alpha} - z \frac{\Delta_{\Gamma_{\alpha}}
    |A_{\alpha}|^2}{2} + z^3 \frac{\Delta_{\Gamma_{\alpha}}
    |A_{\alpha}|^2}{6} + O\Big(z\frac{\alpha^{4+p}}{1+(\alpha
    r)^5}\Big).
\end{equation}
Combining \eqref{del2z}, \eqref{Hdelz} and \eqref{lap} we derive
that in $\mathcal{B}_{\Gamma_{\alpha}}$
\begin{align*}
    \Delta w & = (\Delta_{\Gamma_{\alpha}(z)} +
    |A_{\alpha}|^2)h_0^{\alpha} + h_2'^{\alpha} - z^2 H_{3,\alpha} \\
    & + z\left(-\frac{\Delta_{\Gamma_{\alpha}}|A_{\alpha}|^2}{2} +
    |A_{\alpha}|^2 + h_3'^{\alpha} - |A_{\alpha}|^2 +
    \frac{|A_{\alpha}|^4}{2}\right) + \\
    &  + z^3 \left(\frac{\Delta_{\Gamma_{\alpha}}
    |A_{\alpha}|^2}{6} + 20 h_5^{\alpha} - \frac{|A_{\alpha}|^4}{2}- H_{4,\alpha}\right) = \\
    & = (\Delta_{\Gamma_{\alpha}(z)} +
    |A_{\alpha}|^2)h_0^{\alpha} + h_2'^{\alpha} - z^2 H_{3,\alpha} +
    O\Big(\frac{\alpha^{4+p}}{1+(\alpha
    r)^5}\Big).
\end{align*}
\end{proof}

Now we would like to determine how far the level surfaces $\{w=\pm
1\}$ stand from the graph $\Gamma_{\alpha}$. Note that for $|z|\leq
2$ and uniformly in $y\in \Gamma_{\alpha}$, we have $w = z +
O(\alpha^p)$ and $\del_z w = 1 + O(\alpha^2)$. Thus for all small
enough $\alpha>0$, $w(y,\cdot)$ is strictly increasing and attains
the values $\pm 1$ for unique $z_{\pm}(y)$ with $|z_{\pm}(y)|\leq
2$.

\begin{lemma}\label{lemma_dzw} For all small enough $\alpha > 0$ (so that $z_{\pm}$ is
well-defined),
\begin{equation*}
   \del_z w(y, z_{\pm}(y)) = 1 \pm h_2'^{\alpha} - \frac{|A_{\alpha}|^2
     (h_0^{\alpha})^2}{2} - h_2'^{\alpha}h_0^{\alpha} +O\Big(\frac{\alpha^{2+3p}}{1 + (\alpha
     r)^5}\Big).
\end{equation*}
\end{lemma}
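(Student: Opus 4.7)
The plan is to expand both $w$ and $\del_z w$ around the first approximation $z_0 := \pm 1 - h_0^\alpha$, which is the root of $w(y,\cdot) = \pm 1$ one obtains by ignoring all the small correction coefficients in \eqref{ansatz}. Writing $\eta_\pm := z_\pm - z_0$, I would Taylor expand
\begin{align*}
w(y, z_\pm) &= w(y, z_0) + \eta_\pm\, \del_z w(y, z_0) + \tfrac{\eta_\pm^2}{2}\, \del_z^2 w(y, z_0) + O(\eta_\pm^3), \\
\del_z w(y, z_\pm) &= \del_z w(y, z_0) + \eta_\pm\, \del_z^2 w(y, z_0) + O(\eta_\pm^2),
\end{align*}
use the constraint $w(y, z_\pm) = \pm 1$ to solve the first line for $\eta_\pm$, and substitute into the second.

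Everything reduces to a single algebraic identity, which is exactly what drove the choice of the coefficients $h_1'^\alpha, h_3'^\alpha, h_5^\alpha$ in Remark \ref{forcedchoi}:
\[
h_1'^\alpha + \tfrac{1}{2} h_3'^\alpha + 5\, h_5^\alpha = -\tfrac{1}{3}|A_\alpha|^4.
\]
This follows directly from the definitions by noting that the coefficients of $\Delta_{\Gamma_\alpha}|A_\alpha|^2$ and of $H_{4,\alpha}$ both vanish in this combination. Using it, together with the size bounds \eqref{h1-1h3}--\eqref{h1'h3'h5}, \eqref{sizeh0}, \eqref{sizeh2'}, direct substitution of $z = z_0$ into the ansatz and into \eqref{delz}, \eqref{del2z}, discarding everything smaller than the target error, yields
\begin{align*}
w(y, z_0) \mp 1 &= \mp \tfrac{|A_\alpha|^2}{3} + \tfrac{h_2'^\alpha}{2} + \tfrac{h_0^\alpha |A_\alpha|^2}{2} + O\!\left(\tfrac{\alpha^{2+2p}}{1+(\alpha r)^4}\right), \\
\del_z w(y, z_0) &= 1 - \tfrac{|A_\alpha|^4}{3} \pm h_2'^\alpha - \tfrac{|A_\alpha|^2 (h_0^\alpha)^2}{2} - h_0^\alpha h_2'^\alpha + O\!\left(\tfrac{\alpha^{2+3p}}{1+(\alpha r)^5}\right), \\
\del_z^2 w(y, z_0) &= \pm |A_\alpha|^2 + O\!\left(\tfrac{\alpha^{2+p}}{1+(\alpha r)^3}\right).
\end{align*}
The spurious $-|A_\alpha|^4/3$ on the middle line is the price paid for evaluating at $z_0$ instead of the true $z_\pm$; it will be undone by the $\eta_\pm$-correction below.

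Inverting the first display, since $\del_z w(y, z_0) = 1 + O(\alpha^2)$ is uniformly bounded away from zero, gives $\eta_\pm = \pm |A_\alpha|^2/3 - h_2'^\alpha/2 - h_0^\alpha |A_\alpha|^2/2 + O(\alpha^{2+2p}/(1+(\alpha r)^4))$. Substituting into the expansion for $\del_z w(y, z_\pm)$, the decisive cross term
\[
\eta_\pm\, \del_z^2 w(y, z_0) = \left(\pm\tfrac{|A_\alpha|^2}{3}\right)\!\left(\pm |A_\alpha|^2\right) + (\text{smaller}) = \tfrac{|A_\alpha|^4}{3} + (\text{smaller})
\]
exactly cancels the $-|A_\alpha|^4/3$ sitting inside $\del_z w(y, z_0)$, producing the claimed formula. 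The main obstacle is then purely combinatorial: one must verify that each leftover cross term, most notably $h_2'^\alpha |A_\alpha|^2$, $h_0^\alpha |A_\alpha|^4$, $(h_2'^\alpha)^2$, $(h_0^\alpha)^3 |A_\alpha|^2$, and $\eta_\pm^2 \del_z^3 w(y, z_0)$, is bounded by $C_p\, \alpha^{2+3p}/(1+(\alpha r)^5)$. The two borderline products $h_2'^\alpha |A_\alpha|^2$ and $h_0^\alpha |A_\alpha|^4$ have size $\alpha^{4+p}/(1+(\alpha r)^5)$, and the assumption $p < 1$ is precisely what guarantees $\alpha^{4+p} \leq \alpha^{2+3p}$ for small $\alpha$, so that they fall into the error.
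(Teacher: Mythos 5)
Your proof is correct and follows essentially the same strategy as the paper: solve for the root of $w(y,\cdot)=\pm 1$ to sufficient accuracy, then substitute into $\del_z w$. The paper expands around $\pm 1$ (writing $z_\pm=\pm 1+\delta_\pm$ and iterating to refine $\delta_\pm$), whereas you expand around $z_0=\pm 1 - h_0^\alpha$, which absorbs the $h_0^\alpha$-dependence at the outset and makes the Taylor bookkeeping slightly cleaner; you also isolate the identity $h_1'^\alpha+\tfrac12 h_3'^\alpha+5h_5^\alpha=-\tfrac13|A_\alpha|^4$ (which indeed checks out, the coefficients of $\Delta_{\Gamma_\alpha}|A_\alpha|^2$ and $H_{4,\alpha}$ vanishing in that combination) where the paper lets the cancellation appear implicitly in the displayed arithmetic. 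These are presentation differences, not a different route, and your error accounting, in particular the observation that the borderline products of size $\alpha^{4+p}/(1+(\alpha r)^5)$ fit into $O(\alpha^{2+3p}/(1+(\alpha r)^5))$ precisely because $p<1$, is right.
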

\begin{proof}
Fix $y\in \Gamma_{\alpha}$ and let us estimate $z_{\pm}(y)$. Write
$z_{\pm} = \pm 1 + \delta_{\pm}$, where $\delta_{\pm} = o(1)$ as
$\alpha \to 0$. We compute
\begin{align*}
    w(y,z_{\pm}) & = h_0^{\alpha} +
    (1-\frac{|A_{\alpha}|^2}{2})(\pm 1 +\delta_{\pm}) + h_2^{\alpha}(\pm 1 +\delta_{\pm})^2
    + \\ & +
    \frac{|A_{\alpha}|^2}{6}(\pm 1 +\delta_{\pm})^3 + O\Big(\frac{\alpha^{4}}{1+(\alpha
    r)^4}\Big) \\
    & = h_0^{\alpha} \pm (1 - \frac{|A_{\alpha}|^2}{2}) + h_2^{\alpha}
    \pm
 \frac{|A_{\alpha}|^2}{6} + \\
 & + \delta_{\pm}(1 \pm 2 h_2^{\alpha}) + \delta_{\pm}^2 (\pm \frac{|A_{\alpha}|^2}{2} + h_2^{\alpha}) +
 \delta_{\pm}^3\frac{|A_{\alpha}|^2}{6} +O\Big(\frac{\alpha^{4}}{1+(\alpha
    r)^4}\Big).
\end{align*}
Thus,
\begin{equation*}
    h_0^{\alpha} \mp \frac{|A_{\alpha}|^2}{3} + h_2^{\alpha} +
    \delta_{\pm}(1 \pm 2h_2^{\alpha}) + \delta_{\pm}^2(\frac{|A_{\alpha}|^2}{2} +
    h_2^{\alpha}) + \delta_{\pm}^3 \frac{|A_{\alpha}|^2}{6} = O\Big(\frac{\alpha^{4}}{1+(\alpha
    r)^4}\Big),
\end{equation*}
so that
\begin{equation*}
    \delta_{\pm} = O\Big(h_0^{\alpha} + h_2^{\alpha} +
    \frac{|A_{\alpha}|^2}{3}\Big) = O\Big(\frac{\alpha^p}{1+\alpha
    r}\Big),
\end{equation*}
which in turn implies
\begin{equation}\label{deltapm}
    \delta_{\pm} = - h_0^{\alpha} \pm \frac{|A_{\alpha}|^2}{3} -
    h_2^{\alpha} + O\Big(\frac{\alpha^{2+2p}}{1+(\alpha
    r)^4}\Big),
\end{equation}
We can now estimate $\del_z w(y, z_{\pm}(y))$:
\begin{align*}
    \del_z w (y,z_{\pm}(y)) & = 1 - \frac{|A_{\alpha}|^2}{2} +
    h_1'^{\alpha} + 2 h_2^{\alpha} (\pm 1 - h_0^{\alpha})  + \\ & + \frac{|A_{\alpha}|^2+h_3'^{\alpha}}{2}(\pm 1 - h_0^{\alpha}
     \pm \frac{|A_{\alpha}|^2}{3})^2 + 5 h_5^{\alpha} + O\Big(\frac{\alpha^{2+3p}}{1 + (\alpha
     r)^5}\Big) = \\
     & = 1  + ( \pm 2
     h_2^{\alpha} \mp |A_{\alpha}|^2
     h_0^{\alpha}) + (- 2 h_2^{\alpha}h_0^{\alpha} + \frac{|A_{\alpha}|^2 (h_0^{\alpha})^2}{2})  + \\
     & + (h_1'^{\alpha} + \frac{h_3'^{\alpha}}{2} + \frac{|A_{\alpha}|^4}{3} + 5h_5^{\alpha}) + O\Big(\frac{\alpha^{2+3p}}{1 + (\alpha
     r)^5}\Big)\\
     & = 1 \pm h_2'^{\alpha} - \frac{|A_{\alpha}|^2
     (h_0^{\alpha})^2}{2} - h_2'^{\alpha}h_0^{\alpha} +O\Big(\frac{\alpha^{2+3p}}{1 + (\alpha
     r)^5}\Big).
\end{align*}

\end{proof}
Straightforward derivative estimates using \eqref{GradCompAlpha}
yield
\begin{lemma}\label{supGrad}
We have
\begin{equation*}
    |\nabla_{\Gamma_{\alpha}(z_{\pm})}w|^2 = |\nabla_{\Gamma_{\alpha}(z_{\pm})}h_0^{\alpha}|^2 + O\Big(\frac{\alpha^{4+p}}{1+(\alpha
    r)^5}\Big)
\end{equation*}
and thus
\begin{align*}
    |\nabla w|^2(y,z_{\pm}(y)) & = (\del_z w)^2 +
    |\nabla_{\Gamma_{\alpha}(z_{\pm})}w|^2 = \\
    & = 1 \pm 2 h_2'^{\alpha}(1 + O(\alpha^p)) -
    |A_{\alpha}|^2(h_0^{\alpha})^2 + |\nabla_{\Gamma_{\alpha}(z_{\pm})}h_0^{\alpha}|^2 + O\Big(\frac{\alpha^{2+3p}}{1+(\alpha
    r)^5}\Big).
\end{align*}
\end{lemma}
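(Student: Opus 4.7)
The plan is to decompose $|\nabla w|^2$ along the Fermi coordinates into the normal and tangential parts, $|\nabla w|^2 = (\partial_z w)^2 + |\nabla_{\Gamma_\alpha(z)} w|^2$, which is valid because $\partial_z$ is the unit normal to the level surfaces $\Gamma_\alpha(z)$ and is Euclidean-orthogonal to their tangent spaces. The normal part has already been estimated in Lemma \ref{lemma_dzw}, so the bulk of the work is to control the tangential part.

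For the first identity, I would restrict $w$ to $\Gamma_\alpha(z)$ for fixed $z$ and write
\begin{equation*}
    \tilde{w} = \sum_{k=0}^{5} z^k \tilde{h}_k^\alpha, \qquad \tilde{h}_k^\alpha := h_k^\alpha \circ \pi_{\Gamma_\alpha},
\end{equation*}
so that $\nabla_{\Gamma_\alpha(z)}\tilde{w} = \sum_k z^k \nabla_{\Gamma_\alpha(z)}\tilde{h}_k^\alpha$. For $|z|\leq 2$, the comparison \eqref{GradCompAlpha} identifies each $\nabla_{\Gamma_\alpha(z)}\tilde{h}_k^\alpha$ with $(\nabla_{\Gamma_\alpha} h_k^\alpha)\circ\pi_{\Gamma_\alpha}$ up to a negligible multiplicative error. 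The crucial input is the size of the intrinsic gradients on $\Gamma_\alpha$: from the scaling of $h_0^\alpha$ we have $|\nabla_{\Gamma_\alpha} h_0^\alpha| = O(\alpha^{1+p}/(1+\alpha r)^2)$, while the explicit formulas for $h_1^\alpha$ and $h_3^\alpha$ combined with \eqref{h1-1h3}--\eqref{h1'h3'h5} give $|\nabla_{\Gamma_\alpha} h_k^\alpha| = O(\alpha^3/(1+\alpha r)^3)$ for $k=1,3$, and $|\nabla_{\Gamma_\alpha} h_k^\alpha| = O(\alpha^{3+p}/(1+\alpha r)^4)$ for $k=2$, with even smaller contribution from $k=5$ and zero from $k=4$.

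Splitting $\nabla_{\Gamma_\alpha(z_\pm)}\tilde{w} = \nabla_{\Gamma_\alpha(z_\pm)}\tilde{h}_0^\alpha + R$ with $R$ collecting the $k\geq 1$ terms satisfying $|R| = O(\alpha^3/(1+\alpha r)^3)$, and expanding the square yields a cross term $2\langle \nabla_{\Gamma_\alpha(z_\pm)}\tilde{h}_0^\alpha, R\rangle = O(\alpha^{4+p}/(1+\alpha r)^5)$ and a quadratic term $|R|^2 = O(\alpha^6/(1+\alpha r)^6)$, which is smaller still. This gives the first formula. For the second, I would simply square the expression from Lemma \ref{lemma_dzw}: the leading correction is $\pm 2 h_2'^\alpha - |A_\alpha|^2(h_0^\alpha)^2 - 2h_2'^\alpha h_0^\alpha$, and since $h_0^\alpha = O(\alpha^p)$, the term $-2h_2'^\alpha h_0^\alpha$ absorbs into the factor $\pm 2 h_2'^\alpha(1+O(\alpha^p))$; all higher-order squares such as $(h_2'^\alpha)^2$ and cross terms with the original $O(\alpha^{2+3p}/(1+\alpha r)^5)$ error are of smaller order and fall into the final error bound. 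Adding to $|\nabla_{\Gamma_\alpha(z_\pm)}\tilde{h}_0^\alpha|^2$ yields the claim.

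There is no serious obstacle here; this is really a bookkeeping exercise in gathering error terms. The only step that requires a moment's care is verifying that the gradients of $h_1^\alpha$ and $h_3^\alpha$ decay no slower than $\alpha^3(1+\alpha r)^{-3}$, since it is precisely this rate (together with the $\alpha^{1+p}(1+\alpha r)^{-2}$ decay of $\nabla_{\Gamma_\alpha} h_0^\alpha$) that forces the cross term into the announced $O(\alpha^{4+p}/(1+\alpha r)^5)$ class --- which is exactly what is needed later in Section \ref{TheMeat} to close the supersolution inequality on $\{w = \pm 1\}$ without any residual $r^{-4}$ term surviving.
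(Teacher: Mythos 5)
Your proposal is correct and takes essentially the same route the paper intends: the paper leaves this lemma to the reader with only the remark ``Straightforward derivative estimates using \eqref{GradCompAlpha} yield,'' and your argument supplies exactly those details. You correctly decompose $|\nabla w|^2$ into normal and tangential parts in Fermi coordinates, read off the gradient decay rates of $h_0^\alpha$, $h_1^\alpha$, $h_2^\alpha$, $h_3^\alpha$, $h_5^\alpha$ from \eqref{h1-1h3}--\eqref{h1'h3'h5} and the scaling hypotheses, identify the cross term $2\langle\nabla_{\Gamma_\alpha(z_\pm)}h_0^\alpha,R\rangle=O(\alpha^{4+p}/(1+(\alpha r)^5))$ as dominant, and then square Lemma \ref{lemma_dzw}, absorbing $-2h_2'^\alpha h_0^\alpha$ into $\pm 2h_2'^\alpha(1+O(\alpha^p))$.
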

Our ansatz has the very nice, extra feature that it is strictly
increasing in $\mathcal{B}_{\Gamma_{\alpha}}$ in the direction of
$e_9$.
\begin{lemma}\label{ansmonotoni}
For all $\alpha$ small enough
\begin{equation*}
    \del_{x_9} w  > 0 \qquad \text{in} \quad
    \mathcal{B}_{\Gamma_{\alpha}}.
\end{equation*}
\end{lemma}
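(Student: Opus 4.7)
The plan is to compute $\partial_{x_9} w$ in the Fermi coordinates \eqref{Fermialpha} and separate it into a ``normal'' and a ``tangential'' contribution to $e_9$, then verify that the former dominates for $\alpha$ small. Since the signed distance $z$ satisfies $\nabla z = \nu_\alpha(y)$ at $x = y+z\nu_\alpha(y)$, and each $h_k^\alpha$ is extended to $\mathcal{B}_{\Gamma_\alpha}$ as a function that is constant along normal lines (so $h_k^\alpha = h_k^\alpha\circ\pi_{\Gamma_\alpha}$), we obtain
\[
\partial_{x_9} w \;=\; (\partial_z w)\,(\nu_\alpha(y)\cdot e_9) \;+\; \nabla_{\Gamma_\alpha(z)} w \cdot e_9.
\]

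For the normal contribution, the formula \eqref{delz} together with the coefficient estimates \eqref{h1-1h3}, \eqref{h1'h3'h5}, \eqref{sizeh0}, \eqref{sizeh2'} and $|z|\leq 2$ give $\partial_z w = 1 + O(\alpha^{\min(2,p+2)}) \geq 1/2$ once $\alpha$ is small. Writing $y = (x',F_\alpha(x'))$, one has $\nu_\alpha(y)\cdot e_9 = 1/\sqrt{1+|\nabla F_\alpha(x')|^2}$, and the growth bound $|\nabla F(x')|\leq C(1+r(x'))^2$ (which follows from Theorem \ref{ThmRefined} and the explicit form $F_\infty = r^3 g(\theta)$, coupled with standard interior gradient estimates for the MSE) yields $|\nabla F_\alpha(x')| = |\nabla F(\alpha x')| \leq C(1+\alpha r)^2$, so that
\[
\nu_\alpha(y)\cdot e_9 \;\geq\; \frac{c_0}{1+(\alpha r)^2}
\]
for some $c_0>0$ independent of $\alpha$.

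For the tangential contribution, I would apply \eqref{GradCompAlpha} term-by-term to $w = \sum_k z^k \tilde h_k^\alpha$ at fixed $z$, which controls $|\nabla_{\Gamma_\alpha(z)}\tilde h_k^\alpha|$ by $|\nabla_{\Gamma_\alpha} h_k^\alpha|\circ \pi_{\Gamma_\alpha}$ up to a factor $(1+O(\alpha))$. Combined with \eqref{h1-1h3}, \eqref{h1'h3'h5} and \eqref{sizeh0}, the dominant contribution comes from $k=0$ and gives
\[
|\nabla_{\Gamma_\alpha(z)} w|(y,z) \;\leq\; C\,\frac{\alpha^{1+p}}{(1+\alpha r)^2},
\]
the contributions for $k\geq 1$ being only $O(\alpha^3/(1+\alpha r)^3)$ or smaller. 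Putting the two estimates together,
\[
\partial_{x_9} w \;\geq\; \frac{c_0/2 \;-\; C\alpha^{1+p}}{1+(\alpha r)^2} \;>\; 0
\]
uniformly in $x \in \mathcal{B}_{\Gamma_\alpha}$, provided $\alpha$ is small enough (this is where the hypothesis $0<p<1$ enters, ensuring $\alpha^{1+p}=o(1)$).

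The main obstacle is the lower bound $\nu_\alpha(y)\cdot e_9 \gtrsim 1/(1+(\alpha r)^2)$, which encodes the fact that $\Gamma_\alpha$ never becomes vertical in the $e_9$-direction and that its ``tilt'' grows in a controlled way at infinity; once granted, the remaining estimates are direct consequences of the size bounds on the coefficients $h_k^\alpha$ already established in Section \ref{SecGeoUWB}.
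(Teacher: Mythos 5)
Your proof is correct and follows essentially the same route as the paper: decompose $\del_{x_9}w$ in Fermi coordinates into the normal contribution $(\del_z w)(\nu_\alpha\cdot e_9)$ and the tangential contribution $\nabla_{\Gamma_\alpha(z)}w\cdot e_9$, show the former is bounded below by $c/(1+(\alpha r)^2)$ (the paper cites Remark 8.2 of \cite{PKW} for the same lower bound on $1/\sqrt{1+|\nabla F_\alpha|^2}$ that you derive), and show the latter is $O(\alpha^{1+p}/(1+(\alpha r)^2))$ using the coefficient size bounds. The only small slip is the parenthetical remark that $0<p<1$ is what ensures $\alpha^{1+p}=o(1)$ — any $p>-1$ would give that; the constraint $p\in(0,1)$ plays its role elsewhere in the construction — but this does not affect the correctness of the argument.
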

\begin{proof}
Computing in the coordinates \eqref{Fermialpha}
\begin{align}
    \del_{x_9} w(y,z) & = \nabla w \cdot e_9 = (\nabla_{\Gamma_{\alpha}(z)}w + (\del_z w)
    \nu(y))\cdot e_9 \notag \\
    & \geq \frac{\del_z w}{\sqrt{1+|\nabla F_{\alpha}}|^2} -
    |\nabla_{\Gamma_{\alpha}(z)}w| \notag \\
    & =  \frac{1 + O(\alpha^2)}{\sqrt{1+|\nabla F_{\alpha}}|^2} + O\Big(\frac{\alpha^{1+p}}{1 + (\alpha
    r)^2}\Big) \label{w_mono}.
\end{align}
Since
\[
    \frac{1}{\sqrt{1+|\nabla F_{\alpha}}|^2} \geq \frac{c}{1+(\alpha
    r)^2}
\]
for some positive constant $c>0$ (see Remark $8.2$ in \cite{PKW}),
\eqref{w_mono} yields
\[\del_{x_9} w >0\] for all small enough $\alpha >0$.
\end{proof}

\subsection{Supersolutions for the Jacobi
operator.}\label{subsec_Supersols}

As we have noticed from Lemma \ref{lemma_lap}, the sign of $\Delta
w$ depends crucially on whether the Jacobi operator
\begin{equation*}
    J_{\Gamma_{\alpha}} = \Delta_{\Gamma_{\alpha}} +
    |A_{\alpha}|^2
\end{equation*}
admits positive supersolutions that satisfy appropriate differential
inequalities. We will show that the Jacobi operator $J_{\Gamma}$ on
the (non-rescaled) minimal graph $\Gamma$ admits the following two
types of smooth supersolutions:

\begin{itemize}
\item \emph{Type 1} is a positive supersolution $h\in C^{2}(\Gamma)$ such that for some $0<\eps<1$
\begin{equation*}
    J_{\Gamma} h(y) \leq - \frac{1}{1 + r^{4+\eps}(y)};
\end{equation*}

\item \emph{Type 2} is a positive supersolution $h\in C^2(\Gamma)$ such that
\begin{equation*}
    J_{\Gamma} h(y) \leq -\frac{|\theta(y) - \pi/4|}{1 + r^3(y)}.
\end{equation*}
\end{itemize}

The Type 1 supersolution is readily provided by \cite[Proposition
4.2(b)]{PKW} (our Proposition \ref{h'} below is a straightforward
modification). We construct the Type 2 supersolution in Proposition
\ref{h"} and the supporting Lemma \ref{h"_infty}.

\begin{prop}\label{h'}
Let $0<\eps<1$. There exists a \textbf{positive} function $h\in
C^2(\Gamma)$ such that
\[
\|D^2_{\Gamma}h\|_{4+\eps,\infty} + \|D_{\Gamma}h\|_{3+\eps,\infty}
+ \|h\|_{2+\eps,\infty} < \infty
\]
and
\begin{equation*}
    J_{\Gamma} h \leq  - \frac{1}{1+ r^{4+\eps}}.
\end{equation*}
\end{prop}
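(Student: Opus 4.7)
The plan is to follow the construction of \cite[Proposition 4.2(b)]{PKW}, adapting the decay rate from the specific value $\sigma$ of Theorem \ref{ThmRefined} to an arbitrary $\eps\in(0,1)$. The argument has three steps: produce an asymptotic supersolution on the model graph $\Gamma_{\infty}$, transfer it to $\Gamma$ via the asymptotic identification \eqref{lifttomodel}, and extend globally using the stability of $\Gamma$.

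On $\Gamma_{\infty}$, because of the cone-like homogeneous structure in the polar coordinates $(r,\theta)$, I would seek a supersolution of the form $h_{\infty}(r,\theta) = r^{-(2+\eps)}\phi(\theta)$ with $\phi\in C^{2}([0,\pi/2])$ strictly positive. Since both $|A_{\infty}|^{2}$ and the induced metric are homogeneous, the Jacobi operator decomposes into radial and angular parts, giving $J_{\Gamma_{\infty}}h_{\infty} = r^{-(4+\eps)}(L_{\eps}\phi)(\theta)$ for an explicit second-order linear ODE operator $L_{\eps}$ on $[0,\pi/2]$. The spectral information about the linearized mean curvature operator $H'[F_{\infty}]$ around $\Gamma_{\infty}$ contained in \cite[\S 7]{PKW} (reproduced in Appendix \ref{PKWsupsinfinity}), together with the relation \eqref{JacobiandLinearizedMC} between $J_{\Gamma_\infty}$ and $H'[F_\infty]$, implies that for every $\eps\in(0,1)$ the homogeneity $-(2+\eps)$ is separated from the indicial exponents, and one can produce a positive $\phi$ with $L_{\eps}\phi \leq -c_{0} < 0$. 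This yields $J_{\Gamma_{\infty}}h_{\infty} \leq -c_{0}\,r^{-(4+\eps)}$ on $\{r\geq 1\}$, with the attendant derivative bounds $|h_{\infty}| = O(r^{-(2+\eps)})$, $|Dh_{\infty}| = O(r^{-(3+\eps)})$, $|D^{2}h_{\infty}| = O(r^{-(4+\eps)})$.

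Transferring $h_{\infty}$ to a function $\hat h$ on $\Gamma\cap\{r > R_{0}\}$ through the identification induced by $\pi_{\Gamma}$ and applying \eqref{Jacobiclo}, the error in replacing $J_{\Gamma_{\infty}}h_{\infty}$ by $J_{\Gamma}\hat h$ is $O(r^{-(6+\eps+\sigma)})$, which is dominated by the principal $r^{-(4+\eps)}$ for $r$ large; so $J_{\Gamma}\hat h \leq -c_{1}(1+r^{4+\eps})^{-1}$ on $\{r > R_{1}\}$ for some $R_{1}\geq R_{0}$. To extend globally, let $h^{*}$ be any smooth strictly positive extension of $\hat h$ to $\Gamma$ satisfying the desired weighted norm bounds. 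Since $\Gamma$ is an entire stable minimal graph, the first Dirichlet eigenvalue of $-J_{\Gamma}$ on every bounded smooth subdomain of $\Gamma$ is strictly positive. Solving $J_{\Gamma}\psi = -f$ on a ball $\Omega \Subset \Gamma$ containing the region where the desired inequality fails, with $\psi = 0$ on $\partial\Omega$ and $f$ a smooth, compactly supported majorant of $(J_{\Gamma}h^{*})_{+} + 2(1+r^{4+\eps})^{-1}$, yields $\psi\geq 0$ by the maximum principle. Taking $h := h^{*} + \psi$ and smoothing across $\partial\Omega$ so that $J_{\Gamma}h$ is only perturbed in a controlled way produces a global positive $C^{2}$ supersolution with the required weighted bounds and negative Jacobi image.

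The main obstacle is the construction of $\phi$ in the first step throughout the full range $\eps\in(0,1)$: it amounts to verifying that the homogeneity $-(2+\eps)$ avoids the indicial exponents of the ODE associated with the linearized MSE on the cone $\Gamma_{\infty}$, and that the resulting angular profile can be chosen positive with strictly negative image. This is precisely the content of the spectral lemmas in \cite[\S 7]{PKW}. Given these inputs, the transfer step via \eqref{Jacobiclo} and the global extension via the stability of $\Gamma$ are routine.
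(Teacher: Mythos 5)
Your first two steps (separation of variables $h_{\infty}(r,\theta)=r^{-(2+\eps)}\phi(\theta)$ on $\Gamma_{\infty}$, then transfer to $\Gamma$ via $\pi_\Gamma$ and \eqref{Jacobiclo}) match the paper's Lemma \ref{h'infty} and the cited content of \cite[\S 7]{PKW} (Appendix \ref{append1}); note the paper writes the $\Gamma_\infty$--supersolution as $\phi_1/\sqrt{1+|\nabla F_\infty|^2}$ with $\phi_1=r^{-\eps}q_1(\theta)$, which is the same $r^{-(2+\eps)}$ decay since $\sqrt{1+|\nabla F_\infty|^2}\sim r^2$.

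The divergence, and the genuine gap, is in the global extension. You propose to take a positive smooth extension $h^*$ of the asymptotic supersolution, solve a Dirichlet problem $J_\Gamma\psi=-f$ with $\psi=0$ on $\partial\Omega$ in a large ball, and set $h=h^*+\psi$ ``after smoothing across $\partial\Omega$.'' Extending $\psi$ by zero outside $\Omega$ produces a function with an inward-pointing kink on $\partial\Omega$ (by the Hopf lemma $\partial_\nu\psi<0$ there), so $h$ is a \emph{weak} supersolution but is only Lipschitz across $\partial\Omega$. Mollifying $\psi$ near $\partial\Omega$ destroys the pointwise inequality $J_\Gamma h\leq -(1+r^{4+\eps})^{-1}$ in an uncontrolled way (the mollified Laplacian can become large and positive near the kink), and the statement you are proving requires $h\in C^2(\Gamma)$ with the weighted $C^2$ norm bounds. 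So ``smoothing in a controlled way'' is precisely the nonroutine step, and as stated the argument does not deliver a $C^2$ supersolution.

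The paper sidesteps this entirely: rather than patching, it shows (in the unnumbered solvability proposition preceding Proposition \ref{h'}) that the global linear equation $J_\Gamma h=-\tfrac{1}{1+r^{4+\eps}}$ is uniquely solvable with $\|D^2_\Gamma h\|_{4+\eps,\infty}+\|D_\Gamma h\|_{3+\eps,\infty}+\|h\|_{2+\eps,\infty}<\infty$, by solving on expanding domains $\Gamma_{R_n}$, using the asymptotic barrier from Lemma \ref{h'infty} and the weighted Schauder a priori estimate (Lemma \ref{apriori_lemma}) to extract a limit. This produces a genuine $C^{2,\gamma}$ global solution with the stated decay at once, and then strict positivity of $h$ follows from the strong maximum principle (Remark \ref{maxPrin}) because $h/h_0\to 0$ at infinity. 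Your invocation of stability plays the same role as Remark \ref{maxPrin} (existence of the positive kernel element $h_0$), but the mechanism you use it for—a one-shot compactly supported correction—does not produce the required regularity; the paper instead uses it to run a clean limiting/compactness argument which does.
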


\begin{prop}\label{h"}
There exists a \textbf{non-negative} function $h\in C^2(\Gamma)$
such that
\begin{equation*}
\|D^2_{\Gamma}h\|_{3,\infty} + \|D_{\Gamma}h\|_{2,\infty}
  +  \|h\|_{1,\infty} < \infty
\end{equation*}
and
\begin{equation*}
    J_{\Gamma} h \leq - \frac{|\theta - \pi/4|}{1+ r^3}.
\end{equation*}
Moreover, there is a $\frac{1}{2} < \tau < \frac{2}{3}$ (e.g. $\tau
= \frac{5}{8}$) such that for every $0\leq \delta < \delta'\leq
\frac{3}{2}$
\begin{equation}\label{refinedecay}
    \|h\|_{1+\delta\tau, L^{\infty}(S(-\delta'))} + \|D_{\Gamma}h\|_{2+\delta\tau, L^{\infty}(S(-\delta'))} + \|D^2_{\Gamma}h\|_{3+\delta\tau, L^{\infty}(S(-\delta'))} < \infty
\end{equation}
where $S(-\delta') = \{|\theta-\frac{\pi}{4}| \leq
(1+r)^{-\delta'}\} \subset \Gamma.$
\end{prop}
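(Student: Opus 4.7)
The plan is to build the supersolution asymptotically on the model graph $\Gamma_{\infty}$, where the Jacobi operator has an explicit structure, and then transfer to $\Gamma$ via the asymptotic closeness \eqref{Jacobiclo}. The initial bridge is the relation \eqref{JacobiandLinearizedMC} between $J_{\Gamma_{\infty}}$ and the linearized mean curvature operator $H'[F_{\infty}]$: supersolutions of $H'[F_{\infty}]$ translate to supersolutions of $J_{\Gamma_{\infty}}$, and PKW's Lemmas 7.2 and 7.3 (stated in Appendix \ref{PKWsupsinfinity}) provide supersolutions of $H'[F_{\infty}]$ with the right decay, one on each side of the Simons cone. The central difficulty is that these two half-space supersolutions must be matched into a single $C^2$ function on all of $\Gamma_{\infty}$ (away from the origin) with the delicate differential inequality preserved across the cone.

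To carry out the matching, I would work in $\{\theta \geq \pi/4\}$, extract from PKW's output a nonnegative supersolution $h_+$ of $J_{\Gamma_{\infty}}$ with $h_+ = 0$ on $\{\theta = \pi/4\}$ and the appropriate one-sided slope there, and use the symmetry $F_{\infty}(u,v) = -F_{\infty}(v,u)$ to reflect $h_+$ into $h_-$ on $\{\theta \leq \pi/4\}$. The piecewise function is continuous but has a corner along the cone; I would smooth it inside a band $S(-\delta') = \{|\theta - \pi/4| \leq (1+r)^{-\delta'}\}$ by convex combination with a carefully chosen interior profile of size $(1+r)^{-1-\delta\tau}$. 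Inside this band both the target $-|\theta-\pi/4|/(1+r^3)$ and the $J_{\Gamma_{\infty}}$-action on the interpolated function become small; the exponent $\tau$ must be chosen in $(1/2,2/3)$ so that the second-derivative error introduced by the cutoff stays strictly below the target uniformly for $0 \leq \delta < \delta' \leq 3/2$. The concrete value $\tau = 5/8$ suffices, and this tuning is exactly what \eqref{refinedecay} records.

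Once the patched smooth supersolution is in place on $\Gamma_{\infty} \cap \{r > R\}$, I would transport it to $\Gamma$ via the projection $\pi_{\Gamma}$ and check using \eqref{Jacobiclo}, \eqref{curv2clo}, and \eqref{lapBelt_comparison} that the error terms $r^{-2-\sigma}|D_{\Gamma}^2 h| + r^{-3-\sigma}|D_{\Gamma} h| + r^{-4-\sigma}|h|$ decay strictly faster than the target and can be absorbed; the sharper decay of \eqref{refinedecay} inside $S(-\delta')$ is indispensable for controlling the second-derivative contribution, since without it the $r^{-2-\sigma}|D^2_{\Gamma}h|$ term would overwhelm the target in the band. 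To globalize, I would smoothly cut off the outer piece and solve an auxiliary Dirichlet problem $J_{\Gamma} v = -g$ on a large ball with boundary data matching the outer piece, relying on the invertibility of $J_{\Gamma}$ on bounded domains of the stable minimal graph $\Gamma$; standard Schauder theory upgrades the solution to $C^2$, and adding a large multiple of the Type~1 supersolution from Proposition \ref{h'} if needed restores nonnegativity everywhere.

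The main obstacle is the patching step: the target vanishes linearly on the Simons cone, so the tolerance in the differential inequality is minimal precisely where the derivatives of any cutoff are most singular. The requirement that the patched function simultaneously decay fast enough inside $S(-\delta')$ to absorb the transported second-derivative error, yet not so fast that the inequality degenerates outside the cutoff band, is what pins down the admissible range of $\tau$; the existence of such an exponent is the nontrivial quantitative core of the proposition.
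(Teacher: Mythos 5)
Your overall plan — extract the two PKW barriers on $\Gamma_\infty$ from Appendix \ref{PKWsupsinfinity}, join them across the Simons cone, transfer to $\Gamma$ via \eqref{Jacobiclo}, and globalize with a Dirichlet problem plus a large multiple of the Type~1 supersolution — matches the paper's skeleton (Lemma \ref{h"_infty}, Lemma \ref{globBarrier"}, then the proof of Proposition \ref{h"}). But the mechanism you propose for the crucial joining step is where a genuine gap sits. You suggest taking the evenly reflected $h_{\text{ext}}$ and \emph{smoothing the corner by a convex combination with a cutoff inside $S(-\delta')$}. A convex combination $\phi h_1 + (1-\phi) h_2$ satisfies
\begin{equation*}
J_{\Gamma_\infty}\bigl(\phi h_1 + (1-\phi) h_2\bigr) = \phi\, J_{\Gamma_\infty}h_1 + (1-\phi)\, J_{\Gamma_\infty}h_2 + 2\nabla_{\Gamma_\infty}\phi\cdot\nabla_{\Gamma_\infty}(h_1-h_2) + (h_1-h_2)\Delta_{\Gamma_\infty}\phi,
\end{equation*}
and the cross terms do not have a sign. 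Worse, since $h_{\text{ext}} \sim (\theta-\pi/4)^\tau$ with $\tau<1$ near the cone, the angular derivatives of the reflected function blow up there, so the $\nabla\phi\cdot\nabla(h_1-h_2)$ contribution is genuinely out of control exactly where the target $-|\theta-\pi/4|/(1+r^3)$ vanishes and there is no slack to absorb it. Tuning $\tau$ does not help: in the paper $\tau$ is dictated by PKW's Lemma 7.3 (and by \eqref{cond}), not by a cutoff constraint.

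The paper sidesteps this entirely by never smoothing. It patches by $\min$, which automatically preserves the \emph{weak} supersolution property without requiring derivative control at the junction, and — this is the piece missing from your sketch — it introduces a genuinely new interior barrier $\tilde{h}_{\text{int}} = (\theta-\pi/4)^2 r^{3+\delta}/\sqrt{1+|\nabla F_\infty|^2}$ on a band $\Gamma_{\infty,\text{int}}$ around the cone. This $\tilde h_{\text{int}}$ is $C^\infty$ across $\theta=\pi/4$, its $J_{\Gamma_\infty}$-action is of size $O(r^{-1+\delta+2\alpha_1})$ which can be made faster-decaying than the Type~1 reservoir $-(1+r^{4+\epsilon})^{-1}$, and it dominates $\tilde h_{\text{ext}}$ on the overlap for the right choice of $\alpha_1, \alpha_2, \delta, \epsilon, \tau$ (the chain \eqref{cond}). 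The $\min$ of $h_{\text{int}}+h'$ and $h_{\text{ext}}+h'$ is then a locally Lipschitz weak supersolution; subsequently the paper solves $J_\Gamma h_n = f$ on expanding domains, uses this weak barrier plus a Type~1 barrier to bound $h_n$, and extracts a $C^2$ limit. The refined decay \eqref{refinedecay} is then recovered \emph{a posteriori} from the solved equation by rescaled interior Schauder estimates together with the geometric fact \eqref{fullballs} that balls of radius $\sim\beta r$ around $S(-\delta')$ stay inside $S(-\delta)$ — not, as you suggest, by design of the cutoff. You should replace the convex-combination smoothing with a $\min$-patching against a smooth auxiliary interior supersolution, and derive the refined interior decay from Schauder estimates for the exact equation rather than building it into the ansatz.
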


Before we venture into proving these two propositions, recall that
\begin{align*}
    J_{\Gamma} h & \leq f \quad \text{in the weak sense if} \\
    (J_{\Gamma} h -f )[\phi] & \leq 0 \quad \text{for all
    non-negative}~\phi\in C^{1}_c(\Gamma).
\end{align*}
Above we have used the notation
\begin{align*}
    f[\phi] &= \int_{\Gamma}f\phi \qquad \text{for}\quad f\in L^1_{\text{loc}}(\Gamma) \\
    (J_{\Gamma}h)[\phi] &= \int_{\Gamma} - \nabla_{\Gamma}h\cdot
    \nabla_{\Gamma} \phi + |A|^2\phi
\end{align*}
where test functions $\phi \in C^1_c(\Gamma)$.

Let us make the important remark that the operator $J_{\Gamma}$
satisfies the maximum principle.

\begin{remark}[Maximum principle for $J_{\Gamma}$]\label{maxPrin}
Since $h_0 := \frac{1}{\sqrt{1+|\nabla F|^2}} > 0$ solves
$J_{\Gamma}h_0 = 0$ (see \eqref{jacoandLinMCmini}), the elliptic
operator
\begin{equation*}
     L := h_0 \Delta_{\Gamma} + 2\nabla_{\Gamma}h_0\cdot \nabla_{\Gamma}
\end{equation*}
satisfies
\begin{equation*}
    J_{\Gamma} h = L(h/h_0).
\end{equation*}
Thus, if $h$ is a supersolution for $J_{\Gamma}$ (in the weak sense)
in a bounded domain $U\subset \Gamma$ and $h\in C(\overline{U})$,
\begin{equation*}
    0\geq J_{\Gamma}h = L(h/h_0) \qquad \text{in} \quad U,
\end{equation*}
so that the quotient $h/h_0$ doesn't achieve its minumum at an
interior point of $U$ unless $h/h_0$ is constant in $U$.
\end{remark}

In fact, we will construct the supersolutions in Propositions
\ref{h'} and \ref{h"} as solutions to appropriate elliptic
differential equations rather than inequalities. This approach will
pay off, because in the end we will automatically possess global
smooth supersolutions, whose first and second derivatives will have
the appropriate decay rates at infinity. Specifically, we will
investigate the linear problem
\begin{equation}\label{linearProb}
    J_{\Gamma} h = f \qquad \text{in}\quad \Gamma,
\end{equation}
where $h$ and $f$ are in appropriately weighted H\"older-type
spaces. As usual, we first study the problem \eqref{linearProb} in
bounded domains $\Gamma_{R} := \Gamma \cap \{r<R\}$
\begin{equation}\label{linearProbbounded}
\begin{aligned}
    J_{\Gamma} h_R  & = f \qquad \text{in}\quad \Gamma_R \\
    h_R & = 0 \qquad \text{on} \quad \del \Gamma_R.
\end{aligned}
\end{equation}
Because $J_{\Gamma}$ satisfies the maximum principle, the problem
\eqref{linearProbbounded} is uniquely solvable for all $R$. In order
then to run a compactness argument which takes a sequence $h_{R_n}$,
$R_n\nearrow\infty$ and produces a globally-defined $h:\Gamma \to
\real$ that solves \eqref{linearProb}, we need two important
ingredients -- the existence of suitable global barrier functions
and an a priori estimate (Lemma \ref{apriori_lemma}) for the
solution to \eqref{linearProbbounded}.

We first exhibit functions that are (weak) supersolutions for
$J_{\Gamma}$ far away from the origin. Later, we will be able to
modify and extend them to barrier functions on the whole of
$\Gamma$.


\begin{lemma}[cf. Lemma 7.2 in \cite{PKW}]\label{h'infty}
Let $0<\eps < 1$. There exists a positive function $h$, such that
for some $R>0$ and constants $c, C>0$
\begin{align}
    J_{\Gamma} h(y) & \leq -\frac{1}{1+ r^{4+\eps}} \qquad \text{in}\quad \{r(y)>R\} \label{h'inftydifineq}\\
    \frac{c}{1+ r^{2+\eps}} \leq h(y) & \leq \frac{C}{1+ r^{2+\eps}} \qquad
\text{in}\quad \{r(y) > R\}. \label{h'inftysize}
\end{align}
\end{lemma}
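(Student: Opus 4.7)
The plan is to construct $h$ in two stages: first produce an explicit homogeneous supersolution on the model graph $\Gamma_{\infty}$, then transfer it to $\Gamma$ via the Jacobi operator comparison \eqref{Jacobiclo} from Lemma \ref{lemma_close}.

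On $\Gamma_{\infty}$, the defining function $F_{\infty}(r,\theta) = r^{3}g(\theta)$ is homogeneous of degree three, so the induced metric components, the Laplace-Beltrami operator $\Delta_{\Gamma_{\infty}}$ and the squared second fundamental form $|A_{\infty}|^{2}$ all carry definite scaling weights in $r$; in particular $|A_{\infty}|^{2} = O(r^{-2})$, and applying $J_{\Gamma_{\infty}} = \Delta_{\Gamma_{\infty}} + |A_{\infty}|^{2}$ to a separated ansatz
\begin{equation*}
h_{\infty}(r,\theta) = r^{-2-\epsilon}\,\phi(\theta)
\end{equation*}
produces, after factoring out $r^{-4-\epsilon}$, a second order ODE operator $\mathcal{L}_{\epsilon}[\phi](\theta)$ on $[0,\pi/2]$, supplemented with Neumann conditions at both endpoints coming from the $O(4)\times O(4)$ symmetry of $\Gamma_{\infty}$ together with regularity at $\theta = 0,\,\pi/2$. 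The aim is then to solve the ODE boundary value problem $\mathcal{L}_{\epsilon}[\phi] = -1$ with a strictly positive $\phi$ on $[0,\pi/2]$.

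The main obstacle is precisely this ODE analysis at the critical radial exponent. At $\epsilon = 0$, the operator $\mathcal{L}_{0}$ fails to be invertible on the relevant symmetric sector: the translation and dilation Jacobi fields of $\Gamma_{\infty}$ described in [PKW, \S 7] give a homogeneous zero mode of radial order $-2$. For $\epsilon > 0$ small this eigenvalue perturbs off zero in the correct direction, so a Fredholm alternative argument on the orthogonal complement of the zero mode produces $\phi$, and its positivity follows from a maximum principle argument based on the positive Jacobi field of $J_{\Gamma_{\infty}}$ (cf.\ Remark \ref{maxPrin} applied to the model graph). The outcome is $J_{\Gamma_{\infty}} h_{\infty} \le -c\,r^{-4-\epsilon}$ with $h_{\infty}$ comparable to $r^{-2-\epsilon}$.

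Transferring to $\Gamma$ is then routine. I would lift $h_{\infty}$ to a function $h$ on $\Gamma \cap \{r>R\}$ via the orthogonal projection $\pi_{\Gamma}$ relating $\Gamma$ and $\Gamma_{\infty}$ in the sense of \eqref{lifttomodel}. From the explicit formula, $|D_{\Gamma_{\infty}}^{k}h_{\infty}| = O(r^{-2-\epsilon-k})$ for $k=0,1,2$, so the error term in \eqref{Jacobiclo} is $O(r^{-4-\epsilon-\sigma})$. Hence
\begin{equation*}
(J_{\Gamma}h)\circ\pi_{\Gamma} \;=\; J_{\Gamma_{\infty}}h_{\infty} + O\bigl(r^{-4-\epsilon-\sigma}\bigr) \;\le\; -c\,r^{-4-\epsilon} + O\bigl(r^{-4-\epsilon-\sigma}\bigr),
\end{equation*}
which is bounded above by $-\tfrac{c}{2}\,r^{-4-\epsilon}$ for $r>R$ with $R$ large enough, proving \eqref{h'inftydifineq}. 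The two-sided size bound \eqref{h'inftysize} is immediate from the uniform positivity and boundedness of $\phi$ on $[0,\pi/2]$. The crux of the whole argument is the angular spectral step; once it is in place, the asymptotic proximity of $\Gamma$ to $\Gamma_{\infty}$ converts the explicit model supersolution into one on $\Gamma$ with only a harmless loss of order $r^{-\sigma}$.
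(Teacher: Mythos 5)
Your overall strategy --- build a separated-variables supersolution $h_\infty(r,\theta) = r^{-2-\eps}\phi(\theta)$ on the model graph $\Gamma_\infty$, lift it to $\Gamma$ by $\pi_\Gamma$, and absorb the error in \eqref{Jacobiclo} --- is exactly the paper's, and your transfer step is carried out correctly (a minor slip: with $|D^k_{\Gamma_\infty}h_\infty| = O(r^{-2-\eps-k})$ the error in \eqref{Jacobiclo} is actually $O(r^{-6-\eps-\sigma})$, even smaller than the $O(r^{-4-\eps-\sigma})$ you wrote, so this is harmless). The one structural difference is that the paper does not re-derive the $\Gamma_\infty$ supersolution from scratch: it imports \eqref{diffen1} directly from PKW's Lemma 7.2, where the object is first produced as a supersolution $\phi_1 = r^{-\eps}q_1(\theta)$, $q_1>0$, of the linearized mean-curvature operator $H'[F_\infty]$, and then converted into a $J_{\Gamma_\infty}$-supersolution $h_1 = \phi_1/\sqrt{1+|\nabla F_\infty|^2}\sim r^{-2-\eps}$ via the identity \eqref{JacobiandlinMCOimprov}. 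Your sketch of the angular ODE step is in the right spirit but imprecise in two places: the approximate zero mode of $J_{\Gamma_\infty}$ at radial weight $r^{-2}$ is the \emph{vertical}-translation Jacobi field $1/\sqrt{1+|\nabla F_\infty|^2}$ (equivalently $H'[F_\infty](1)=0$), not horizontal translations (weight $r^{2}$) or dilations (which vanish identically since $F_\infty$ is homogeneous of degree $3$); and the strict positivity of $q_1$ is precisely the delicate content of PKW's Lemma 7.2 that a bare Fredholm-alternative argument would not settle on its own. Since the paper treats that lemma as a citation, however, this does not constitute a gap relative to the paper's own proof.
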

\begin{proof}
It follows from the existence of the Type 1--supersolution
$h_{1,\infty} \in C^2(\Gamma_{\infty})$ for $J_{\Gamma_{\infty}}$
far away from the origin $\{r>R\}$ (See \eqref{diffen1} of Appendix
\ref{append1}):
\begin{equation*}
    J_{\Gamma_{\infty}} h_{1,\infty}(\tilde{y}) \leq -\frac{1}{1+
    r(\tilde{y})^{4+\eps}} \qquad \tilde{y} \in \Gamma_{\infty}\cap \{r>R\}.
\end{equation*}
Use the orthogonal projection $\pi_{\Gamma}$ to lift $h_{1,\infty}$
to a function $h_1$ on $\Gamma\cap \{r>R\}$
\begin{equation*}
    h_1 \circ \pi_{\Gamma} = h_{1,\infty} \qquad \text{on} \quad
    \Gamma \cap \{r > R\}.
\end{equation*}
Then according to \eqref{Jacobiclo} and the gradient and hessian
estimates in Lemma \ref{gradhess} (see Appendix \ref{append2}),
\begin{align*}
    J_{\Gamma} h(\pi_{\Gamma}(\tilde{y})) & =
    J_{\Gamma_{\infty}}h_{1,\infty}(\tilde{y}) + \\ & +
    O\left(r^{-2-\sigma}|D_{\Gamma_{\infty}}^2 h_{1,\infty}| +
    r^{-3-\sigma} |D_{\Gamma_{\infty}} h_{1,\infty}| +
    r^{-4-\sigma}|h_{1,\infty}|\right)(\tilde{y}) \\
    & \leq -\frac{1}{2(1+r^{4+\eps}(\tilde{y}))}
\end{align*}
for $r(\tilde{y})>R$ large enough. Equations \eqref{h'inftydifineq}
and \eqref{h'inftysize} are obtained once we note that, according to
Lemma \ref{lemma_close}, $y :=\pi_{\Gamma}(\tilde{y})$ is very close
to $\tilde{y}$ :
\begin{equation*}
    |y - \tilde{y}| = O(r^{-1-\sigma}(y))
\end{equation*}
in $\{r>R\}$ for a large enough $R$.

\end{proof}
\begin{lemma}\label{h"_infty}
There exists a locally Lipschitz, non-negative function $h$, which
is a weak supersolution for $J_{\Gamma}$ away from the origin and
which satisfies
\begin{equation}\label{weakh_asymp}
    J_{\Gamma} h(y) \leq -\frac{|\theta - \pi/4|}{1 + r^3} \qquad\text{on}\quad
\{r(y)>r_0\}
\end{equation}
for some large enough $r_0>0$. Moreover,
\begin{equation}\label{weakhbound}
    h = O\Big(\frac{|\theta - \pi/4|^{\tau}}{1+r}+ \frac{1}{1+ r^{2+\eps}}\Big)
\end{equation}
for some $\tau \in (\frac{1}{2},\frac{2}{3})$ and some $\eps\in
(0,1)$ (e.g. $\tau = \frac{5}{8}$ and $\eps = \frac{1}{8}$ do the
job).
\end{lemma}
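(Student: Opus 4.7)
The plan is to patch two supersolutions for $J_\Gamma$: a transplanted Type~$2$ supersolution built on the explicit model graph $\Gamma_\infty$, and a scalar multiple of the Type~$1$ supersolution from Lemma~\ref{h'infty}. The two complement each other geometrically: the transplanted object controls the target right-hand side $-|\theta-\pi/4|/(1+r^3)$ wherever $|\theta-\pi/4|$ is not too close to zero, while the $\theta$-independent Type~$1$ supersolution takes over on a thin strip around the Simons cone, where the target is itself small and the lifting errors of the model supersolution would otherwise be ruinous.

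The first step is to construct a Type~$2$ supersolution $h_{2,\infty}$ on $\Gamma_\infty \cap \{r>R\}$. Because $\Gamma_\infty$ is explicit, the linearized mean curvature operator $H'[F_\infty]$ is amenable to direct computation: the estimates of \cite[Lemmas~7.2,~7.3]{PKW} recalled in Appendix~\ref{PKWsupsinfinity} furnish supersolutions for $H'[F_\infty]$, which via the algebraic identity \eqref{JacobiandLinearizedMC} convert into a non-negative $h_{2,\infty}\in C^2(\Gamma_\infty\cap\{r>R\})$ satisfying $J_{\Gamma_\infty}h_{2,\infty} \leq -c|\theta-\pi/4|/r^3$ together with the pointwise bound $h_{2,\infty}=O(|\theta-\pi/4|^\tau/r + r^{-2-\epsilon})$, $\tau\in(1/2,2/3)$, $\epsilon\in(0,1)$, with compatible decay for the first and second intrinsic derivatives.

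The second step is to transplant $h_{2,\infty}$ to $\Gamma$ through the orthogonal projection $\pi_\Gamma$, setting $\tilde h_2\circ\pi_\Gamma = h_{2,\infty}$, and to apply the comparison \eqref{Jacobiclo}. The dominant lift error appears when both intrinsic derivatives in $|D^2_{\Gamma_\infty}h_{2,\infty}|$ fall on the factor $|\theta-\pi/4|^\tau$, producing a contribution of size $|\theta-\pi/4|^{\tau-2}r^{-5-\sigma}$. This is absorbed by half of the main term exactly in the outer region
\[
\mathcal R_{\text{out}} := \{|\theta-\pi/4|\geq r^{-\gamma}\},\qquad (3-\tau)\gamma\leq 2+\sigma,
\]
leaving $J_\Gamma \tilde h_2\leq -\tfrac{c}{2}|\theta-\pi/4|/r^3$ there. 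In the complementary strip $\mathcal R_{\text{in}}:=\{|\theta-\pi/4|<r^{-\gamma}\}$ the target satisfies $|\theta-\pi/4|/r^3\leq r^{-3-\gamma}$, so provided $\gamma\geq 1+\epsilon$ the Type~$1$ supersolution $h_1$ delivered by Lemma~\ref{h'infty} obeys $J_\Gamma h_1\leq -1/r^{4+\epsilon}\leq -|\theta-\pi/4|/r^3$ throughout $\mathcal R_{\text{in}}$. The choices $\tau=5/8$, $\epsilon=1/8$ stated in the lemma correspond to picking $\gamma$ in the non-empty window $1+\epsilon\leq \gamma\leq (2+\sigma)/(3-\tau)$ for $\sigma$ sufficiently close to~$1$.

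Finally, I would set $h := \min(A h_1,\tilde h_2)$ for a constant $A>0$ large enough that the two candidates cross over inside the transition annulus $\{|\theta-\pi/4|\sim r^{-\gamma}\}$, so that $h=\tilde h_2$ on $\mathcal R_{\text{out}}$ and $h=Ah_1$ on $\mathcal R_{\text{in}}$ near their common boundary. Since the pointwise minimum of two weak supersolutions of a linear elliptic operator with the maximum principle (see Remark~\ref{maxPrin}) is again a weak supersolution, the resulting $h$ is a locally Lipschitz, non-negative function on $\{r>r_0\}$ realizing \eqref{weakh_asymp}, and the size bound \eqref{weakhbound} follows immediately from the bounds on $\tilde h_2$ in $\mathcal R_{\text{out}}$ and on $Ah_1=O(r^{-2-\epsilon})$ in $\mathcal R_{\text{in}}$. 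The main obstacle is the exponent bookkeeping in the previous paragraph: it is this balancing that forces one to exploit the refined asymptotic closeness of $\Gamma$ and $\Gamma_\infty$ through the exponent $\sigma$ in \eqref{Jacobiclo}, and it is also what ultimately dictates the specific numerical values of $\tau$ and $\epsilon$ in the statement.
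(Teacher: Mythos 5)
Your high-level plan is right in spirit (transplant a Type~$2$ supersolution from $\Gamma_\infty$, absorb lift errors with a Type~$1$ supersolution, and patch with $\min$ across a transition annulus), and this is also the paper's strategy. But the specific choice $h=\min(Ah_1,\tilde h_2)$ with $h_1$ the $\theta$-independent Type~$1$ supersolution fails, because the $\min$ selects the pieces in exactly the wrong regions. As $|\theta-\pi/4|$ increases at fixed $r$, $\tilde h_2\sim |\theta-\pi/4|^{\tau}/r$ increases from $0$ while $Ah_1\sim Ar^{-2-\eps}$ stays constant; hence $\min(Ah_1,\tilde h_2)=\tilde h_2$ \emph{near} the Simons cone and $=Ah_1$ \emph{away} from it, the opposite of what you claim. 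In the outer region the selected piece $Ah_1$ only gives $J_\Gamma(Ah_1)\leq -A\,r^{-4-\eps}$, which does not dominate $-|\theta-\pi/4|/r^3$ when $|\theta-\pi/4|\sim 1$, so \eqref{weakh_asymp} fails there; and in the inner region the selected piece $\tilde h_2$ inherits the cusp $|\theta-\pi/4|^{\tau}$ with $\tau<1$ at the cone, so the claim that $h$ is locally Lipschitz fails too. No choice of the constant $A$ rescues this, since the monotonicity in $\theta$ of the two candidates is fixed.

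The paper's proof circumvents this by taking for the near-cone piece a different lift, $\tilde h_{\mathrm{int}} = (\theta-\pi/4)^2 r^{3+\delta}/\sqrt{1+|\nabla F_\infty|^2}$, which grows \emph{quadratically} in $|\theta-\pi/4|$, and hence grows faster in $|\theta-\pi/4|$ than $\tilde h_{\mathrm{ext}}\sim |\theta-\pi/4|^{\tau}/r$; the crossover of $\tilde h_{\mathrm{int}}$ and $\tilde h_{\mathrm{ext}}$ then happens with the right orientation, so that the $\min$ picks $\tilde h_{\mathrm{int}}$ on the interior and $\tilde h_{\mathrm{ext}}$ on the exterior. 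This is precisely condition \eqref{cond3}. The role of the Type~$1$ supersolution $h'$ from Lemma~\ref{h'infty} is different from what you assign to it: $h'$ is added to \emph{both} pieces (so $h_1 = h_{\mathrm{int}}+h'$, $h_2 = h_{\mathrm{ext}}+h'$) to absorb the errors $O(r^{-1+\delta+2\alpha_1})$ coming from $J_\Gamma h_{\mathrm{int}}$ and the lift errors from \eqref{Jacobiclo}, rather than serving as the near-cone piece on its own. Your exponent bookkeeping correctly identifies the need for a transition window compatible with $\sigma$, but without a $\theta$-growing near-cone piece the $\min$-patching cannot close.
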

\begin{proof}
The construction of the weak supersolution in this case is achieved
by patching up two smooth supersolutions, defined on overlapping
regions of $\Gamma$, via the $\min$ operation. The resultant
function is obviously locally Lipschitz.

One of the building blocks is the Type 2 supersolution for
$J_{\Gamma_{\infty}}$ at infinity \eqref{differen2}-- call it
$\tilde{h}_{\text{ext}}\in C^2(\Gamma_{\infty}\cap \{\pi/4 < \theta
\leq \pi/2\})$ here:
\begin{equation*}
    \tilde{h}_{\text{ext}}(\tilde{y}) = \frac{rq_2(\theta(\tilde{y}))}{\sqrt{1+|\nabla F_{\infty}|^2}}
\end{equation*}
where $q_2(\theta)$ has the following expansion near $\theta =
\frac{\pi}{4}$:
\begin{equation*}
    q_2(\theta) = (\theta-\frac{\pi}{4})^{\tau}(a_0 + a_2(\theta-\frac{\pi}{4})^2 +
\cdots) \qquad a_0>0,
\end{equation*}
and
\begin{equation*}
    J_{\Gamma_{\infty}} \tilde{h}_{\text{ext}} \leq - \frac{(\theta - \pi/4)^{\tau}}{1+ r^{3}} \qquad
    \theta\in(\frac{\pi}{4},\frac{\pi}{2})\quad \text{and} \quad r>r_0.
\end{equation*}
for some large $r_0>0$.
Define for
\begin{equation}\label{cond1}
    -2\leq \alpha_2 < \alpha_1 < 0
\end{equation}
the following subregions of the model graph $\Gamma_{\infty}$
\begin{align*}
    \Gamma_{\infty,\text{int}} & = \{|\theta - \frac{\pi}{4}| <
r^{\alpha_1}\} \cap \{r > r_0\} \subset \Gamma_{\infty} \\
\Gamma_{\infty,\text{ext}} & = \{|\theta - \frac{\pi}{4}| >
r^{\alpha_2}\}\cap \{r > r_0\}\subset \Gamma_{\infty}.
\end{align*}
Note that $\Gamma_{\infty,\text{int}}$ and
$\Gamma_{\infty,\text{ext}}$ have a non-empty overlap and that they
cover all of $\Gamma_{\infty}\cap\{r>r_0\}$. Define
$\tilde{h}_{\text{int}}\in C^2(\Gamma_{\infty})$ by
\begin{equation}\label{gamma0hint}
\tilde{h}_{\text{int}} = \frac{(\theta -
\pi/4)^2r^{3+\delta}}{\sqrt{1+|\nabla F_{\infty}|^2}},
\end{equation}
for some $0<\delta < 1$, which will be specified later, and extend
$\tilde{h}_{\text{ext}}$ on the whole of
$\Gamma_{\infty}\cap\{r>r_0\}$ so that it is even about $\theta =
\frac{\pi}{4}$:
\begin{equation*}
    \tilde{h}_{\text{ext}}(r,\theta)
=\tilde{h}_{\text{ext}}(r,\frac{\pi}{2}-\theta).
\end{equation*}
The goal is that the lifts of
$\tilde{h}_{\text{int}},\tilde{h}_{\text{ext}}$ onto $\Gamma$:
\begin{equation*}
    h_{\text{int}}(\pi_{\Gamma}(\tilde{y})) = \tilde{h}_{\text{int}}(\tilde{y}) \qquad
    h_{\text{ext}}(\pi_{\Gamma}(\tilde{y})) = \tilde{h}_{\text{ext}}(\tilde{y}) \qquad \tilde{y} \in \Gamma_{\infty}
\end{equation*}
corrected by an appropriate asymptotic supersolution of Type 1
(given by the previous Lemma \ref{h'infty}), will satisfy the
desired differential inequality \eqref{weakh_asymp} in the
respective regions
\begin{equation*}
    \Gamma_{\text{int}} = \pi_{\Gamma}(\Gamma_{\infty,\text{int}})\subset \Gamma \quad \text{and} \quad \Gamma_{\text{ext}} =
\pi_{\Gamma}(\Gamma_{\infty,\text{ext}}) \subset \Gamma.
\end{equation*}

Applying the gradient and hessian estimates of Lemma \ref{gradhess}
from Appendix \ref{append2} and the fact that $-2\leq
\alpha_2<\alpha_1$, we derive that in $\Gamma_{\infty,\text{int}}$
\begin{align*}
    |\tilde{h}_{\text{int}}| & = O(r^{1+\delta+2\alpha_1}) \\
    |D_{\Gamma_{\infty}}\tilde{h}_{\text{int}}| & = O\Big((\theta-\frac{\pi}{4})^2r^{\delta} +
r^{-2+\delta}|\theta-\frac{\pi}{4}|\Big) = O(r^{\delta+2\alpha_1}+ r^{\alpha_1+\delta-2}) = O(r^{\delta+2\alpha_1})\\
    |D^2_{\Gamma_{\infty}}\tilde{h}_{\text{int}}| & =
O\Big((\theta-\frac{\pi}{4})^2r^{-1+\delta} +
r^{-3+\delta}|\theta-\frac{\pi}{4}| + r^{-5+\delta}\Big) =
O(r^{-1+\delta+2\alpha_1}).
\end{align*}
On the other hand, $\tilde{h}_{\text{ext}}$ satisfies in
$\Gamma_{\infty,\text{ext}}$
\begin{align*}
    |\tilde{h}_{\text{ext}}| & = O(|\theta-\frac{\pi}{4}|^{\tau}r^{-1})\\
    |D_{\Gamma_{\infty}}\tilde{h}_{\text{ext}}| & = O\Big(|\theta-\frac{\pi}{4}|^{\tau}r^{-2} +
r^{-4}|\theta-\frac{\pi}{4}|^{\tau-1}\Big) = O(|\theta
-\frac{\pi}{4}|^{\tau} r^{-2}) \\
|D^2_{\Gamma_{\infty}}\tilde{h}_{\text{ext}}| &
=O\Big(|\theta-\frac{\pi}{4}|^{\tau}r^{-3} +
r^{-5}|\theta-\frac{\pi}{4}|^{\tau-1} +
r^{-7}|\theta-\frac{\pi}{4}|^{\tau-2}\Big) = O(|\theta
-\frac{\pi}{4}|^{\tau} r^{-3}).
\end{align*}
Thus, the proximity \eqref{Jacobiclo} between $J_{\Gamma}$ and
$J_{\Gamma_{\infty}}$ implies
\begin{align*}
    J_{\Gamma}h_{\text{int}}(\pi_{\Gamma}(\tilde{y})) & = O(r^{-1+\delta+2\alpha_1}(\tilde{y})) \qquad \tilde{y}\in \Gamma_{\infty,\text{int}}\\
    J_{\Gamma} h_{\text{ext}}(\pi_{\Gamma}(\tilde{y})) & \leq
-\frac{|\theta(\tilde{y}) - \pi/4|^{\tau}}{1+r^3(\tilde{y})} +
O\Big(|\theta(\tilde{y})
-\pi/4|^{\tau} r^{-5-\sigma}(\tilde{y})\Big)\\
& \leq -\frac{1}{2}\frac{|\theta(\tilde{y}) -
\pi/4|^{\tau}}{1+r^3(\tilde{y})}\leq - \frac{C|\theta(\tilde{y}) -
\pi/4|}{1+r^3(\tilde{y})} \qquad \tilde{y}\in
\Gamma_{\infty,\text{ext}}
\end{align*}
for large enough $r(\tilde{y})>r_0$. According to Lemma
\ref{lemma_close}, if $y = \pi_{\Gamma}(\tilde{y})$,
\begin{equation*}
    |\tilde{y}-y| = O(r(y)^{-1-\sigma})
\end{equation*}
for some $0<\sigma <1$ and $r(y)>r_0$ large enough. Therefore, the
pair $(r(\tilde{y}),\theta(\tilde{y}))$ is asymptotically equal to
$(r(y),\theta(y))$:
\begin{equation*}
    |r(y)-r(\tilde{y})| = O(r^{-1-\sigma}(y)) \qquad |\theta(y)-\theta(\tilde{y})| =
O(r^{-2-\sigma}(y)).
\end{equation*}
Thus,
\[
\frac{|\theta(\tilde{y}) - \pi/4|}{1+r^3(\tilde{y})} =
\frac{|\theta(y) - \pi/4|}{1+r^3(y)} + O(r(y)^{-5-\sigma})
\]
so that
\begin{align}
J_{\Gamma}h_{\text{int}}(y) + \frac{C}{2}\frac{|\theta(y) -
\pi/4|}{1+r^3(y)} & =
O(r^{-1+\delta+2\alpha_1}(y)+r^{\alpha_1-3}(y)) = \notag \\
& = O(r^{-1+\delta+2\alpha_1}(y))
\qquad y\in \Gamma_{\text{int}} \label{int}\\
J_{\Gamma} h_{\text{ext}}(y) + \frac{C}{2}\frac{|\theta(y) -
\pi/4|}{1+r^3(y)} & \leq -
\frac{C}{2}\frac{|\theta(y)-\pi/4|}{1+r^3(y)}
+O(r^{-5-\sigma}(y))\quad y\in \Gamma_{\text{ext}}. \label{ext}
\end{align}
Let $h'$ be the supersolution for $J_{\Gamma}$, provided by Lemma
\ref{h'infty}:
\begin{equation}\label{littlehelp}
    J_{\Gamma}h' \leq - \frac{1}{1+r^{4+\eps}} \qquad
r>r_0
\end{equation}
for some $0<\eps<1$ which we'll pick shortly. Below we will define
the functions $h_1$ and $h_2$, which will be supersolutions for
$J_{\Gamma}$ on $\Gamma_{\text{int}}$ and $\Gamma_{\text{ext}}$,
respectively, and patch them into a (weak) supersolution $h$,
defined on $\Gamma \cap \{r>r_0\}$, via the $\min$-operation:
\begin{equation*}
    h = \min(h_1, h_2).
\end{equation*}
In order for the operation to succeed, we have to verify the
following:
\begin{itemize}
\item $h_1 := h_{\text{int}} + h'$ satisfies the differential inequality \eqref{weakh_asymp} in
$\Gamma_{\text{int}}$ for large $r_0$:
\begin{equation*}
    J_{\Gamma} (h_{\text{int}} + h')
+\frac{C}{2}\frac{|\theta - \pi/4|}{1+r^3} \leq 0 \qquad
\text{in}\quad \Gamma_{\text{int}}.
\end{equation*}
Because of \eqref{int} and \eqref{littlehelp}, it suffices
\begin{equation*}
-1 + \delta + 2\alpha_1 < -4-\eps \quad \Leftrightarrow \quad
\alpha_1 < \frac{-3-\eps-\delta}{2}
\end{equation*}

\item $h_2 := h_{\text{ext}} + h'$ satisfies
\eqref{weakh_asymp} in $\Gamma_{\text{ext}}$ for large $r_0$:
\begin{equation}\label{cond2}
    J_{\Gamma} (h_{\text{ext}} + h')
+\frac{C}{2}\frac{|\theta - \pi/4|}{1+r^3} \leq 0 \qquad \text{in}
\quad y\in \Gamma_{\text{ext}}.
\end{equation}
By \eqref{ext} and \eqref{littlehelp} this holds for a sufficiently
large $r_0$.

\item $h_1 < h_2$ on $\Gamma_{\text{int}}\setminus\Gamma_{\text{ext}}$ and
$h_1 > h_2$ in $\Gamma_{\text{ext}}\setminus\Gamma_{\text{int}}$,
i.e. we would like to have $\tilde{h}_{\text{int}} <
\tilde{h}_{\text{ext}}$ on $\Gamma_{\infty,\text{int}}$ and
$\tilde{h}_{\text{int}} > \tilde{h}_{\text{ext}}$ in
$\Gamma_{\infty,\text{ext}}$. This will be the case for large enough
$r_0$ if
\begin{equation}\label{cond3}
    \alpha_2 < -\frac{2+\delta}{2-\tau} <\alpha_1
\end{equation}
\end{itemize}
Collect conditions $\eqref{cond1}$, $\eqref{cond2}$ and
$\eqref{cond3}$ in
\begin{equation}\label{cond}
    -2\leq \alpha_2 <-\frac{2+\delta}{2-\tau} <\alpha_1 <
\frac{-3-\eps-\delta}{2}.
\end{equation}
Moreover, \eqref{cond} needs to be compatible with
\begin{equation*}
    \eps, \delta \in (0,1), \qquad \frac{1}{3} < \tau < \frac{2}{3}.
\end{equation*}
Condition \eqref{cond} is fairly tight, but not void: indeed, for
$\delta = \frac{1}{2}$, $\eps = \frac{1}{8}\in (0,1)$ and $\tau =
\frac{5}{8}\in (\frac{1}{3}, \frac{2}{3})$, we have
\begin{equation*}
    -2\leq \alpha_2 <-\frac{20}{11} < \alpha_1 < -\frac{29}{16}.
\end{equation*}
So setting the parameters appropriately, we can conclude that
\begin{equation}\label{finally}
    h = \left\{\begin{array}{ll}h_1 & \text{in} \quad \Gamma_{\text{int}}\setminus\Gamma_{\text{ext}}\\
    \min(h_1, h_2) & \text{in} \quad
\Gamma_{\text{int}}\cap\Gamma_{\text{ext}} \\
h_2 & \text{in} \quad \Gamma_{\text{ext}} \setminus
\Gamma_{\text{int}}
\\\end{array}\right.
\end{equation}
is a weak, locally Lipschitz, supersolution for $J_{\Gamma}$ in
$r>r_0$ for a large enough $r_0>0$ that satisfies
\eqref{weakh_asymp} and \eqref{weakhbound}.
\end{proof}


The second ingredient is an a priori estimate for the solution $h$
to \eqref{linearProbbounded}. Introduce the H\"older-type norms:
\begin{align*}
 |f|_{C^{\gamma}(\Omega)} & = \sup_{y_1\neq y_2 \in \Omega}\left|\frac{f(y_1)-f(y_2)}{\text{dist}_{\Gamma}(y_1, y_2)^{\gamma}} \right| \\
    \|f\|_{k,C^{\gamma}(\Omega)} & = \|f\|_{k,L^{\infty}(\Omega)} +
\|(1+r^{k+\gamma}) |f|_{C^{\gamma}(\mathcal{C}_{\beta(1+ r)}(y)\cap
\Omega)} \|_{L^{\infty}(\Omega)}
\end{align*}
where $\Omega \subseteq \Gamma$, $k\geq 0$, $0<\gamma<1$,
$\text{dist}_{\Gamma}(y_1, y_2)$ is the intrinsic distance on
$\Gamma$ and
\begin{equation*}
    \mathcal{C}_{r}(y) = \{\sum_{i=1}^8 t_i \tilde{e}_{i} +l\nu(y): |t|<r,~ l\in\real\}
\end{equation*}
is the infinite right cylinder with a base $B'_{r}:=\{|t|< r\}
\subset T_y$ on the tangent plane $T_y$ to $y\in \Gamma$ (see
\eqref{defGt} to recall notation).

We will now establish the following regularity estimate.
\begin{lemma}[compare to Lemma 7.5 in \cite{PKW}]\label{apriori_lemma}
Let $R>0$ be finite or infinite and assume $h\in
C^{2,\gamma}(\Gamma_R)$ is a solution to \eqref{linearProbbounded}
with $f\in C^{\gamma}(\Gamma_R)$. Then
\begin{equation}\label{apriori}
    \|D^2_{\Gamma}f\|_{k+2, C^{\gamma}(\Gamma_{R/2})} + \|D_{\Gamma}f\|_{k+1,L^{\infty}(\Gamma_{R/2})}
\leq C (\|h\|_{k,L^{\infty}(\Gamma_R)} +
\|f\|_{k+2,C^{\gamma}(\Gamma_R)} )
\end{equation}
with a constant $C>0$, independent of $R$.
\end{lemma}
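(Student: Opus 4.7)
The plan is to derive \eqref{apriori} by combining the local tangent-plane representation of $\Gamma$ from Lemma \ref{TcoorandsizeofG} with the classical interior Schauder estimate, applied to a $\rho$-rescaled version of the equation $J_{\Gamma}h = f$ on a ball of fixed size. The crucial observation is that the natural length scale at $y\in \Gamma$ is $\rho := 1+r(y)$: on a neighbourhood of this scale $\Gamma$ is a mild perturbation of the tangent hyperplane $T(y)$, $|A|^2 = O(\rho^{-2})$, and by \eqref{tderTensor} intrinsic derivatives of order $m$ of the induced metric decay like $O(\rho^{-m})$.

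For each $y\in \Gamma_{R/2}$ I would use the parametrization \eqref{defGt} to write $J_{\Gamma}h = f$ in the $t$-coordinates on $T(y)\cap B'_{\beta\rho}$ as a linear elliptic equation
\[
g^{ij}(t)\del_{ij}h(t) + b^{k}(t)\del_{k} h(t) + |A|^2(t) h(t) = f(t),
\]
where $b^{k}$ encodes the Christoffel symbols and satisfies $|b^k|=O(\rho^{-1})$. Setting $s = t/\rho$, $\tilde h(s) = h(\rho s)$, $\tilde f(s) = f(\rho s)$ turns the equation into $\tilde L \tilde h = \rho^{2} \tilde f$ on the fixed ball $\{|s|<\beta\}$, where the coefficients $g^{ij}(\rho s)$, $\rho\, b^{k}(\rho s)$ and $\rho^{2}|A|^{2}(\rho s)$ of $\tilde L$ are uniformly bounded in $C^{\gamma}(\{|s|\leq \beta\})$ independently of $y$. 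Classical interior Schauder on $\{|s|\leq \beta/2\}$ then gives
\[
\|\tilde h\|_{C^{2,\gamma}(\{|s|\leq \beta/2\})} \leq C\big(\|\tilde h\|_{L^{\infty}(\{|s|\leq \beta\})} + \rho^{2}\|\tilde f\|_{C^{\gamma}(\{|s|\leq \beta\})}\big),
\]
with $C$ independent of $y$ and $\rho$.

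Unscaling, $|D_t^m h| = \rho^{-m}|D_s^m\tilde h|$ with the appropriate analogue for H\"older seminorms; passing to intrinsic derivatives through \eqref{commensurate} translates this into the pointwise bound
\[
\rho\,|D_{\Gamma}h(y)| + \rho^{2}|D_{\Gamma}^{2}h(y)| + \rho^{2+\gamma}|D_{\Gamma}^{2}h|_{C^{\gamma}(\mathcal{C}_{\beta\rho/2}(y)\cap \Gamma)} \leq C\big(\|h\|_{L^{\infty}(\mathcal{C}_{\beta\rho}(y)\cap \Gamma)} + \rho^{2}\|f\|_{L^{\infty}} + \rho^{2+\gamma}|f|_{C^{\gamma}}\big),
\]
where the $L^{\infty}$ and $C^{\gamma}$ norms of $f$ on the right are taken over $\mathcal{C}_{\beta\rho}(y)\cap \Gamma$. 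Since $r\sim \rho$ on that set, multiplying by $\rho^{k}$ recasts both sides in terms of the weighted norms defined in the statement; taking the supremum over $y\in \Gamma_{R/2}$ yields \eqref{apriori}. For the right-hand side to be controlled by norms on $\Gamma_R$ uniformly in $R$, one just chooses $\beta$ small enough that $\mathcal{C}_{\beta(1+r(y))}(y)\cap \Gamma \subset \Gamma_R$ for every $y \in \Gamma_{R/2}$; this is the sole reason for restricting to the half-domain.

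The only step requiring any thought is the uniform $C^{\gamma}$-control of the coefficients of $\tilde L$ after rescaling. This follows mechanically from \eqref{Gderivatives}--\eqref{tderTensor}: for example, the $C^{1}$ bound $|D_t g^{ij}|=O(\rho^{-1})$ on the ball of diameter $\beta\rho$ gives $|g^{ij}|_{C^{\gamma}(B'_{\beta\rho})} = O(\rho^{-\gamma})$, and this decay is cancelled by the $\rho^{\gamma}$ produced by the $t\to s$ change of variables in the H\"older seminorm. Once this uniform $C^\gamma$ control is established, the rest of the argument is a mechanical transcription of the classical Schauder estimate to a submanifold of controlled geometry at scale $\rho$.
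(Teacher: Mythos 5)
Your proposal is correct and follows essentially the same argument as the paper: express $J_\Gamma h=f$ in the tangent-plane coordinates of Lemma \ref{TcoorandsizeofG}, rescale by $\rho=1+r(y)$ to a fixed ball, verify uniform $C^\gamma$ bounds on the rescaled coefficients via \eqref{tderTensor} and \eqref{Hl_FullEst}, apply interior Schauder, and unscale. The only cosmetic difference is that the paper builds the weight $\rho^k$ into the rescaled functions (setting $\bar h(t)=\rho^k h(\rho t)$, $\bar f(t)=\rho^{k+2} f(\rho t)$) after first normalizing $\|h\|_{k,L^\infty}+\|f\|_{k+2,C^\gamma}\leq 1$, whereas you reinstate the weight by multiplying by $\rho^k$ at the end; these are the same computation.
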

\begin{proof}
The proof is based on a rescaling technique. We may assume
\[
\|h\|_{k,L^{\infty}(\Gamma_R)} + \|f\|_{k+2,C^{\gamma}(\Gamma_R)}
\leq 1.
\]
Pick $y\in \Gamma_{R/2}$, set $\rho = 1 + r(y)$ and express the
operator $J_{\Gamma}$ in $C_{\beta \rho}(y)\cap \Gamma_R $ using the
coordinates $t$ \eqref{defGt}:
\begin{align*}
    J_{\Gamma} h & = \frac{1}{\sqrt{|g|}}\del_i(g^{ij} \sqrt{|g|}\del_j
h) + |A|^2 h = \\
& = g^{ij}\del^2_{ij} h + \del_ig^{ij} \del_j h + \frac{g^{ij}}{2}
\frac{\del_i |g|}{|g|} \del_j h + |A|^2 h= \\
& = a^{ij}\del^2_{ij} h (t) + b^i \del_i h (t) + |A|^2 h (t) = f(t)
\qquad t\in B'_{\beta\rho}.
\end{align*}
Rescaling to size one,
\[
    \bar{h}(t) = \rho^k h(\rho y),\quad \bar{f}(t)= \rho^{k+2}
g(\rho t), \quad \bar{a}^{ij}(t) = a^{ij}(\rho t), \quad \bar{b}^i =
\rho b^i(\rho t),
\]
we get
\begin{equation*}
   \bar{a}^{ij}\del^2_{ij} \bar{h} (t) + \bar{b}^i \del_i \bar{h} (t) + |A_{\rho}|^2(t) \bar{h} (t) =
\bar{f}(t) \qquad \text{in} \quad B'_{\beta}.
\end{equation*}
Recall the standard H\"older norm of a function $q$ defined on a
domain $U\subseteq\real^8$:
\begin{equation*}
    \|q\|_{C^{\gamma}(U)}:= \|q\|_{L^{\infty}(U)} + \sup_{t \neq
    s\in U} \frac{|q(t)-q(s)|}{|t-s|^{\gamma}}.
\end{equation*}
Because of the estimates \eqref{tderTensor} on the metric tensor $g$
and its derivatives, and the estimates \eqref{Hl_FullEst} on the
second fundamental form $|A|^2$ and its derivatives, we can bound
\[
\|\bar{a}^{ij}\|_{C^{\gamma}(B'_{\beta})},
\|\bar{b}^i\|_{C^{\gamma}(B'_{\beta})},
\||A_{\rho}|^2\|_{C^{\gamma}(B'_{\beta})} \leq K
\]
by a universal constant $K$. Thus, by interior Schauder estimates,
\begin{equation*}
    \|D^2_t \bar{h}\|_{C^{\gamma}(B'_{\beta/2})} + \|D_t
\bar{h}\|_{L^{\infty}(B'_{\beta/2})} \leq C(
\|\bar{h}\|_{L^{\infty}(B'_{\beta/2})} +
\|\bar{f}\|_{C^{\gamma}(B'_{\beta/2})}) \leq C'
\end{equation*}
so that
\begin{equation}\label{interior1}
\rho^{k+2+\gamma}|D^2_{\Gamma} h(y) |_{C^{\gamma}(\Gamma_{R}\cap
\mathcal{C}_{\rho\beta/2}(y))} + \rho^{k+2}|D^2_{\Gamma} h (y)| +
\rho^{k+1} |D_{\Gamma} h (y)| \leq C'',
\end{equation}
for each $y\in \Gamma_{R/2}$.
\end{proof}

Let us now show that
\begin{equation*}
J_{\Gamma} h = f \qquad \text{in} \quad \Gamma,
\end{equation*}
where the right-hand side $\|f\|_{k+2,C^{\gamma}(\Gamma)} < \infty$
for some $k>2$, is uniquely solvable when $\|h\|_{k,\infty} <
\infty$.
\begin{prop}
Let $k>2$, $0<\gamma<1$ and $\|f\|_{k+2,C^{\gamma}(\Gamma)} <
\infty$. There exists a unique solution $h\in C^2(\Gamma)$ to
\eqref{linearProb} such that $\|h\|_{k, L^{\infty}(\Gamma)} <
\infty$. Moreover,
\begin{equation}\label{globalest}
    \|D^2_{\Gamma} h\|_{k+2,L^{\infty}(\Gamma)} + \|D_{\Gamma}
h\|_{k+1,L^{\infty}(\Gamma)} + \|h\|_{k,L^{\infty}(\Gamma)} \leq
C\|f\|_{k+2,C^{\gamma}(\Gamma)}.
\end{equation}
\end{prop}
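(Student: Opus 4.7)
The plan is to follow the classical approximation-barrier-compactness scheme. First, for each finite $R>0$ I solve the Dirichlet problem \eqref{linearProbbounded} in $\Gamma_R$; this is uniquely solvable in $C^{2,\gamma}(\overline{\Gamma_R})$ by Fredholm theory combined with Remark \ref{maxPrin}, which guarantees that the conjugated operator $L = h_0\Delta_\Gamma + 2\nabla_\Gamma h_0\cdot \nabla_\Gamma$ satisfies the strong maximum principle on any bounded domain and hence rules out nontrivial kernel elements.

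Second, I would produce a positive global barrier $\Psi\in C^2(\Gamma)$ with $\|\Psi\|_{k,\infty}<\infty$ and $J_\Gamma \Psi \leq -(1+r)^{-(k+2)}$ on $\Gamma$, and then apply the maximum principle for $L$ to $h_R/h_0$ against $\pm\|f\|_{k+2,\infty}\,\Psi/h_0$ to obtain the uniform bound $|h_R(y)|\leq C\|f\|_{k+2,\infty}(1+r(y))^{-k}$. To build $\Psi$ I would start from the asymptotic supersolution of Lemma \ref{h'infty}, which already satisfies the required inequality on $\{r>R_0\}$ and decays like $(1+r)^{-(2+\varepsilon)}$, and extend it inward over the compact region $\{r\leq R_0\}$ by adding a large positive multiple of the Jacobi field $h_0$ suitably cut off, verifying that the gluing preserves the classical supersolution property; for $k$ in the range needed by the application this already suffices, while for larger $k$ one bootstraps via the equation $\Delta_\Gamma h = f - |A|^2 h$, using $|A|^2 = O(r^{-2})$ and standard decay estimates for $\Delta_\Gamma$ on the $8$-dimensional graph.

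With the uniform $L^\infty$ bound in hand, Lemma \ref{apriori_lemma} upgrades to uniform $C^{2,\gamma}$ bounds on $\Gamma_{R/2}$; Arzel\`a--Ascoli combined with a diagonal argument then extracts a subsequence $h_{R_n}$ converging in $C^2_{\mathrm{loc}}$ to some $h \in C^{2,\gamma}_{\mathrm{loc}}(\Gamma)$ solving $J_\Gamma h = f$ globally, and passing to the limit in both the barrier bound and the Schauder estimate yields \eqref{globalest}. For uniqueness, I take two solutions $h_1,h_2 \in \|\cdot\|_{k,\infty}$; since $h_0 \gtrsim (1+r)^{-2}$ at infinity (because $|\nabla F| = O(r^2)$) and $k > 2$, the quotient $u/h_0$ tends to zero at infinity for $u = h_1 - h_2$, so the strong maximum principle for $L$ on balls of growing radius forces $u \equiv 0$. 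The hypothesis $k > 2$ is precisely what places $u$ below the Jacobi field in decay and enables this comparison.

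The main obstacle is the global construction of $\Psi$ with the correct decay rate: the inward gluing must preserve the differential inequality across the matching ring $\{r \sim R_0\}$, and for large $k$ one cannot simply borrow Lemma \ref{h'infty} off the shelf. The key ingredients making this possible are the proximity estimate \eqref{Jacobiclo} between $J_\Gamma$ and $J_{\Gamma_\infty}$, the second fundamental form decay $|A|^2 = O(r^{-2})$ from \S\ref{SecGeoUWB}, and the explicit model supersolutions on $\Gamma_\infty$; the remainder of the argument is routine elliptic PDE.
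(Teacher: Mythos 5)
Your overall scheme -- solve Dirichlet problems in expanding $\Gamma_R$, establish a uniform weighted $L^\infty$ bound, upgrade via Lemma \ref{apriori_lemma}, pass to a limit, and prove uniqueness by comparing $h/h_0$ -- is the same as the paper's, and your uniqueness argument matches the paper's exactly. The genuine divergence is in how the uniform bound $\|h_R\|_{k,\infty} \lesssim \|f\|_{k+2,C^\gamma}$ is obtained. You propose a direct two-sided comparison with a single \emph{global} barrier $\Psi$ satisfying $J_\Gamma\Psi \leq -(1+r)^{-(k+2)}$ on all of $\Gamma$ and $\Psi \sim r^{-k}$. The paper avoids this: it argues by contradiction, normalizes so that $\|\bar{h}_n\|_{k,\infty}=1$ while $\|\bar{f}_n\|_{k+2,C^\gamma}\to 0$, uses the a priori estimate and local compactness to conclude $\bar{h}_n\to 0$ uniformly on compacts, and then only needs the \emph{asymptotic} supersolution of Lemma \ref{h'infty} (valid for $r>r_0$) as a barrier on the exterior, with the boundary values on $\{r=r_0\}$ controlled by the already-established decay on compacts. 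This entirely sidesteps the need for a global barrier.

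The gap in your proposal is precisely the step you flag as the main obstacle: the gluing. Adding ``a large positive multiple of the Jacobi field $h_0$ suitably cut off'' to the exterior barrier $h_\infty'$ does not yield a classical supersolution across the matching annulus. If $\Psi = \psi\, h_\infty' + A\,h_0$ with a cutoff $\psi$, the terms $(\Delta_\Gamma\psi)\,h_\infty' + 2\nabla_\Gamma\psi\cdot\nabla_\Gamma h_\infty'$ have no sign and are not dominated by $A\,J_\Gamma h_0$, which vanishes identically. In the interior region one would have $J_\Gamma\Psi = 0$, not $<0$. A working construction must instead solve a Dirichlet problem on a compact piece with large boundary data and then patch with the exterior barrier via $\min$, checking an ordering condition on the overlap, as is done in Lemma \ref{h"_infty} and Lemma \ref{globBarrier"} -- a nontrivial amount of extra work that the paper's contradiction argument renders unnecessary. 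Separately, your closing remark that ``for larger $k$ one bootstraps via $\Delta_\Gamma h = f - |A|^2 h$'' is not substantiated: bootstrapping improves regularity of a \emph{solution}, not the decay rate of a \emph{supersolution}, and Lemma \ref{h'infty} only produces barriers with decay $r^{-(2+\eps)}$, $\eps<1$, which matches $r^{-k}$ only when $k\in(2,3)$. That range happens to be the only one used (in Proposition \ref{h'}), so the Proposition as applied is fine, but your claim of handling larger $k$ should be dropped or replaced by noting that one should take $\eps = k-2 \in (0,1)$ in Lemma \ref{h'infty}, which also streamlines the conclusion of the barrier estimate.
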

\begin{proof}
Uniqueness follows from the maximum principle (Remark \ref{maxPrin})
and the fact that $|h/h_0|\leq C r^{2-k} \to 0$ as $r\to \infty$.

To establish existence, consider the Dirichlet problem in expanding
bounded domains:
\begin{align*}
    J_{\Gamma} h_n & = f \qquad \text{in}\quad \Gamma_{R_n}  \\
    h_n & = 0 \qquad \text{on} \quad \del\Gamma_{R_n},
\end{align*}
where $R_n \nearrow \infty$. First claim that
\begin{equation}\label{strengthen}
    \|h_n\|_{k, L^{\infty}(\Gamma_{R_n})} \leq C
\|f\|_{k+2,C^{\gamma}(\Gamma_{R_n})}.
\end{equation}
for some constant independent of $n$. Assume not; then there is a
subsequence (call it $R_{n}$ again) such that
\[
\|h_{n}\|_{k, L^{\infty}(\Gamma_{R_{n}})} \geq n
\|f\|_{k+2,C^{\gamma}(\Gamma_{R_n})}.
\]
If we set $\bar{f}_n = f/\|h_{n}\|_{k, L^{\infty}(\Gamma_{R_{n}})}$,
$\bar{h}_n = h_{n}/\|h_n\|_{k, L^{\infty}(\Gamma_{R_{n}})}$, we see
that
\[
    J_{\Gamma} \bar{h}_n = \bar{f}_n
\]
with $\|\bar{h}_n\|_{k, L^{\infty}(\Gamma_{R_{n}})}=1$ and
$\|\bar{f}_n\|_{k+2,C^{\gamma}(\Gamma_{R_n})} \leq 1/n$. The a
priori estimate \eqref{apriori} implies, after possibly passing to a
subsequence, that $h_n$ converge uniformly on compact sets to a
$C^2(\Gamma)$-function $\bar{h}$ with $\|\bar{h}\|_{k,\infty} <
\infty$ which solves
\[
    J_{\Gamma}\bar{h} = 0 \qquad \text{in} \quad \Gamma.
\]
Uniqueness requires that $h = 0$. Let $h_{\infty}'$ be the
supersolution for $J_{\Gamma}$ provided by Lemma \ref{h'infty} with
some $0<\eps < k-2$ and $r_0$ large enough:
\[
    J_{\Gamma} h_{\infty}' \leq - \frac{1}{1+ r^{4+\eps}}, \qquad h_{\infty}'\geq
    \frac{c}{1+r^{2+\eps}} \qquad r>r_0.
\]
Since $h_n \to 0$ uniformly on compact sets, $s_n :=
\sup_{\Gamma_{r_0}}h_n\to 0$. Therefore,
\begin{align*}
    \pm h_n + \mu_n h_{\infty}' & \geq 0 \qquad \text{on} \quad r=r_0 \quad \text{and}
\quad r=R_n \\
J_{\Gamma}(\pm h_n + \mu_n h_{\infty}') & \leq 0 \qquad \text{in}
\quad r_0<r<R_n
\end{align*}
for $\mu_n  = \max\{s_n c^{-1}, \frac{1}{n}\} \to 0$. An application
of the maximum principle yields
\[
    |h_n| \leq \mu_n h_{\infty}' \qquad \text{in} \quad
r_0<r<R_n.
\]
Combine this with the fact that
$\|h_n\|_{k,L^{\infty}(\Gamma_{r_0})} \leq s_nr_0^k$ to conclude
\[
\|h_n\|_{k,L^{\infty}(\Gamma_{R_n})} \to 0 \quad \text{as}\quad
n\to\infty
\]
which is a contradiction. Hence, \eqref{strengthen} holds and the a
priori estimate \eqref{apriori} becomes
\[
\|D^2_{\Gamma}h_n\|_{k+2, C^{\gamma}(\Gamma_{R_n/2})} +
\|D_{\Gamma}h_n\|_{k+1,L^{\infty}(\Gamma_{R_n/2})} + \|h_n\|_{k,
L^{\infty}(\Gamma_{R_n/2})} \leq C
\|f\|_{k+2,C^{\gamma}(\Gamma_{R_n})}
\]
for some constant $C$, independent of $R_n$. Now a standard
compactness argument produces a $C^2(\Gamma)$-function $h$ which
solves \eqref{linearProb} and satisfies the estimate
\eqref{globalest}.
\end{proof}
Proposition \ref{h'} is an immediate corollary.
\begin{proof}[Proof of Proposition \ref{h'}]
Let $h$ solve \eqref{linearProb} with a right-hand side $f = -
\frac{1}{1+r^{4+\eps}}$. We are only left with checking that $h$ is
strictly positive. This is a consequence of the strong maximum
principle (Remark \ref{maxPrin}) and the fact that $|h/h_0(y)| =
O(r^{-\eps}(y)) \to 0$, as $r(y)\to\infty$.
\end{proof}

Now we would like to construct a global barrier function
(not-necessarily smooth) for \eqref{linearProb} with a
right-hand-side
\begin{equation*}
    f = -\frac{|\theta - \pi/4|}{1+r^3}.
\end{equation*}

\begin{lemma}\label{globBarrier"} There exists a globally defined, locally Lipschitz function $h\geq0$
which is a weak supersolution for $J_{\Gamma}$ and which satisfies
\begin{equation*}
    J_{\Gamma}h \leq -\frac{|\theta - \pi/4|}{1+r^3} \quad
    \text{in}\quad \Gamma.
\end{equation*}
Moreover,
\begin{equation}\label{weakhbound}
    h = O\Big(\frac{|\theta - \pi/4|^{\tau}}{1+r}+ \frac{1}{1+ r^{2+\eps}}\Big)
\end{equation}
for some $\tau \in (\frac{1}{2},\frac{2}{3})$ and some $\eps\in
(0,1)$ (e.g. $\tau = \frac{5}{8}$ and $\eps = \frac{1}{8}$).
\end{lemma}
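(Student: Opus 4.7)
The plan is to extend the weak supersolution $h_{\text{out}}$ from Lemma \ref{h"_infty} (defined only for $r > r_0$) across the core region $\{r \leq r_0\}$ by solving a Dirichlet problem on a large disc $\Gamma_R$ and patching with $h_{\text{out}}$ via the $\min$ operation, mirroring the strategy employed in the proof of Lemma \ref{h"_infty} itself.

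Concretely, fix $R > r_0$ and invoke the maximum principle from Remark \ref{maxPrin} (available via the positive Jacobi field $h_0 = 1/\sqrt{1+|\nabla F|^2}$) together with standard linear elliptic theory to solve
\[
J_{\Gamma} H = f := -\frac{|\theta - \pi/4|}{1+r^3} \quad\text{in}\quad \Gamma_R, \qquad H\big|_{\partial \Gamma_R} = h_{\text{out}}\big|_{\partial \Gamma_R}.
\]
The resulting $H \in C^{2,\gamma}(\overline{\Gamma_R})$ is non-negative, since the boundary data is non-negative and the source non-positive. Define
\[
h(y) := \begin{cases} h_{\text{out}}(y), & r(y) \geq R, \\ \min(h_{\text{out}}(y), H(y)), & r_0 < r(y) < R, \\ H(y), & r(y) \leq r_0. \end{cases}
\]
On the overlap annulus $\{r_0 < r < R\}$ both $h_{\text{out}}$ and $H$ are weak supersolutions of $J_\Gamma u \leq f$, and the pointwise minimum of two weak supersolutions of the same linear elliptic equation is again a weak supersolution; continuity across $\{r = R\}$ is automatic from the matched boundary data; and the bound \eqref{weakhbound} is inherited from $h \leq h_{\text{out}}$ on $\{r > r_0\}$ together with the boundedness of $H$ on the compact set $\overline{\Gamma_{r_0}}$.

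The main obstacle is the a priori comparison $H \leq h_{\text{out}}$ on the annulus $\{r_0 < r < R\}$, which is needed both for continuity across the inner interface $\{r = r_0\}$ (so that $h$ is locally Lipschitz rather than merely upper semicontinuous) and for the correct sign of the distributional derivative jumps needed to verify the weak supersolution property via Hopf's lemma. The difference $w := H - h_{\text{out}}$ is a weak subsolution of $J_\Gamma u = 0$ on the annulus (since $J_\Gamma H = f$ and $J_\Gamma h_{\text{out}} \leq f$ there), so $w/h_0$ is a subsolution for the elliptic operator $L$ of Remark \ref{maxPrin}, vanishing on $\{r = R\}$; control of $w/h_0$ on the inner boundary $\{r = r_0\}$ is then obtained by combining interior Schauder estimates for $H$ on $\Gamma_{r_0}$ with the Type~1 barrier $h'$ of Proposition \ref{h'}, possibly after a small adjustment of the Dirichlet data at $\partial \Gamma_R$ (e.g.\ replacing $h_{\text{out}}|_{\partial \Gamma_R}$ by $h_{\text{out}}|_{\partial \Gamma_R} + \delta h'$ for a suitable $\delta > 0$) to force the required inequality all the way to the inner interface.
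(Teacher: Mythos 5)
Your approach is genuinely different from the paper's, and it has a gap you partly identify but do not close. The paper avoids any Dirichlet problem here: it takes the weak supersolution $h_\infty''$ from Lemma~\ref{h"_infty}, multiplies it by a smooth cutoff $\psi$ that vanishes for $r\le r_0$ and equals $1$ for $r\ge r_0+1$, extends by zero, and sets $h = Ch' + \psi h_\infty''$ with $h'$ the Type~1 supersolution of Proposition~\ref{h'}. The cutoff introduces error terms $(2\nabla_\Gamma\psi\cdot\nabla_\Gamma h_\infty'' + h_\infty''\Delta_\Gamma\psi)$ that are \emph{compactly supported} in the transition annulus $\Gamma_{r_0+1}$; since $J_\Gamma h' \le -(1+r^{4+\eps})^{-1}$ is bounded away from zero on that compact set, a sufficiently large $C$ absorbs them, and the weak supersolution property holds with no interface to worry about. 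Your proposal instead solves $J_\Gamma H = f$ on $\Gamma_R$ with Dirichlet data $h_{\text{out}}|_{\partial\Gamma_R}$ and patches by $\min$ across the annulus $\{r_0<r<R\}$.

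The obstruction you name --- the comparison $H \le h_{\text{out}}$ near the inner interface $\{r=r_0\}$ --- is genuine, and your sketched resolution does not work. The maximum-principle argument you describe is circular: $w/h_0$ is $L$-subharmonic on the annulus and vanishes on $\{r=R\}$, but to conclude $w\le 0$ you need the sign of $w$ on the \emph{inner} boundary $\{r=r_0\}$, which is exactly what you are trying to prove, and interior Schauder estimates for $H$ only bound $|H|$ in terms of data that may be large --- they do not yield $H\le h_{\text{out}}$ at $r=r_0$. Moreover, the proposed adjustment of boundary data from $h_{\text{out}}$ to $h_{\text{out}} + \delta h'$ goes in the wrong direction: raising the Dirichlet data raises $H$ (monotonicity via the maximum principle of Remark~\ref{maxPrin}), making the needed inequality $H\le h_{\text{out}}$ \emph{harder}, not easier. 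A further structural point: in the paper, the Dirichlet-problem-plus-barrier step is what \emph{Proposition~\ref{h"}} does, and that step relies on already possessing the globally defined weak supersolution of Lemma~\ref{globBarrier"} as the sup-barrier for the exhausting sequence $h_n$. Using a Dirichlet problem to prove Lemma~\ref{globBarrier"} itself thus creates a chicken-and-egg problem; the whole point of the cutoff construction is to produce the global barrier first without needing a comparison at an internal interface.
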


\begin{proof}
Let $h_{\infty}''$ be the weak supersolution for $J_{\Gamma}$ in
$\Gamma_{r_0}^c$, provided by Lemma \ref{h"_infty}:
\begin{equation*}
    (J_{\Gamma}h_{\infty}''-f)[\phi]\leq 0
\end{equation*}
for every non-negative $\phi\in C^1_c(\Gamma_{r_0}^c)$. Now let
$\psi \in C^{\infty}(\Gamma)$ be a non-negative cutoff function such
that
\[
    \psi(y) = 0 \quad \text{for} \quad r(y)\leq r_0 \quad \text{and} \quad \psi(y) = 1 \quad \text{for} \quad r(y)\geq r_0+1.
\]
Define a function $h''$ on the whole of $\Gamma$ by
\begin{equation*}
    h''(y) = \left\{ \begin{array}{lr}0 & \text{in} \quad r<r_0 \\ \psi(y) h_{\infty}''(y) & \text{in} \quad r\geq r_0 \end{array}
\right.
\end{equation*}
Finally set
\begin{equation*}
    h = Ch' + h'',
\end{equation*}
where $h'$ is the supersolution provided by Proposition \ref{h'} and
$C>0$ is some large constant, to be fixed shortly. Now for any
nonnegative $\phi\in C^1_c(\Gamma)$, the fact that
$(J_{\Gamma}h_{\infty}''-f)[\psi \phi] \leq 0$ implies
\begin{align*}
    (J_{\Gamma}h -f)[\phi] & = C(J_{\Gamma}h')[\phi] - f[\phi] +
\int -h_{\infty}''\nabla_{\Gamma} \psi  \cdot \nabla_{\Gamma}\phi -
\nabla_{\Gamma}h_{\infty}'' \cdot (\psi\nabla_{\Gamma}\phi) +
|A^{2}|h_{\infty}''\psi\phi \\
& = C(J_{\Gamma}h')[\phi] - (1-\psi)f[\phi] + (J_{\Gamma}h_{\infty}'')[\psi \phi] - f[\psi\phi] \\
& + \int(2\nabla_{\Gamma}\psi \cdot \nabla_{\Gamma}h_{\infty}''+
h_{\infty}''\Delta_{\Gamma}\psi)\phi \leq -C(J_{\Gamma}h')[\phi] +
k[\phi]
\end{align*}
where $k$ is a bounded function, compactly supported in
$\Gamma_{r_0+1}$. We were able to carry out the integration by
parts, since $h_{\infty}''$ is locally Lipschitz. Taking $C>0$ large
enough we conclude that $J_{\Gamma} h \leq f$ globally, in the weak
sense.
\end{proof}

We now possess all the means to prove Proposition \ref{h"}.
\begin{proof}[Proof of Proposition \ref{h"}]
Pick $f\in C^{0,\gamma}(\Gamma)$ such that $f\leq 0$ and
\begin{equation*}
    f\circ \pi_{\Gamma}(\tilde{y}) = -\frac{|\theta(\tilde{y}) - \pi/4|}{1+r^3(\tilde{y})} \qquad \tilde{y}\in
    \Gamma_{\infty}\cap\{r>r_0\}
\end{equation*}
for a large enough $r_0$. It is not hard to verify that
$\|f\|_{3,C^{\gamma}(\Gamma)} < \infty$ by transferring the
computation onto $\Gamma_{\infty}$ via \eqref{GradComp} and
employing the gradient estimate in Lemma \ref{gradhess}.

Let $h_n$ solve the Dirichlet problem \eqref{linearProbbounded} in
the expanding bounded domains $\Gamma_{R_n}$, $R_n\nearrow \infty$
\begin{align*}
    J_{\Gamma} h_n & = f \qquad \text{in}\quad \Gamma_{R_n}  \\
    h_n & = 0 \qquad \text{on} \quad \del\Gamma_{R_n}.
\end{align*}
Since $f$ is non-positive, the weak maximum principle implies $h_n
\geq 0$. Let $h'$ be a Type 1 supersolution, provided by Proposition
\ref{h'}, and let $h''$ be the weak Type 2 supersolution which we
constructed in Lemma \ref{globBarrier"}. Noting again that
\begin{equation*}
    \frac{|\theta(\tilde{y}) - \pi/4|}{1+r^3(\tilde{y})} =
\frac{|\theta(y) - \pi/4|}{1+r^3(y)} + O(r(y)^{-5-\sigma}) \qquad y
= \pi_{\Gamma}(\tilde{y})
\end{equation*}
for some $\sigma >0$, we obtain
\begin{align*}
    J_{\Gamma}(-h_n + h''+ Ch')\leq 0
\end{align*}
for a large enough $C$. Moreover, since $- h_n+ Ch' + h'' \geq 0$ on
$\del{\Gamma_{R_n}}$, the maximum principle implies
\begin{equation*}
    0\leq h_n \leq h''+Ch' \quad \text{in}\quad \Gamma_{R_n}.
\end{equation*}
Thus, $\|h_n\|_{1,L^{\infty}(\Gamma_{R_n})}\leq C'$ for an absolute
constant $C'$ independent of $n$. We can now employ the a priori
estimate \eqref{apriori} into a standard compactness argument that
yields a non-negative $C^2$--function $\bar{h}$ solving
\begin{equation*}
    J_{\Gamma} \bar{h} = f \quad \text{in}\quad \Gamma
\end{equation*}
with
\begin{equation}\label{sizeofh_prelim}
\begin{aligned}
& 0\leq \bar{h} \leq  h'' + O(h')\\
    & \|D^2_{\Gamma}\bar{h}\|_{3,\infty} + \|D_{\Gamma} \bar{h}\|_{2, \infty} +
\|\bar{h}\|_{1,\infty} < \infty.
\end{aligned}
\end{equation}
After possibly correcting $\bar{h}$ by a supersolution of Type 1,
$h= \bar{h} + ch'$, 
we can conclude that
\begin{equation*}
J_{\Gamma}h(y) = f + c J_{\Gamma} h' \leq -\frac{|\theta(y) -
\pi/4|}{1+r^3(y)}.
\end{equation*}

To establish the second statement in the proposition, namely the
refinement of decay of $h$ near $\theta = \pi/4$, we notice that on
$S(-\delta)$ with  $0< \delta < \frac{3}{2}$
\begin{equation*}
    \bar{h} = O\Big(\frac{|\theta - \pi/4|^{\tau}}{1+r}+
\frac{1}{1+r^{2^+}}\Big) = O(r^{-1-\delta \tau}),
\end{equation*}
as $\delta \tau + 1 < 2 $.
Also, $\|f\|_{3+\delta \tau, C^{\gamma}(S(-\delta))} < \infty$.

Then an argument, based on rescaling and interior elliptic estimates
-- absolutely analogous to the one for the a priori estimate (Lemma
\ref{apriori_lemma}) -- gives us the interior estimate
\eqref{refinedecay} (for $\bar{h}$ and thus for $h$ itself) on
$S(-\delta') \Subset S(-\delta)$. There is a caveat:  the same
argument will carry through to the present situation, once we
ascertain that $S(-\delta)$ contains ``balls" of size $\sim r$,
centered on points in $S(-\delta')$ far away from the origin. More
precisely, we want for some $r_0$ large enough,
\begin{equation}\label{fullballs}
\mathcal{C}_{\beta r(y)}(y) \cap \Gamma \subseteq S(-\delta) \quad
\text{for every } y\in S(-\delta')\cap \{r(y)>r_0\}
\end{equation}
Note that according to Lemma \ref{TcoorandsizeofG}, the fact that
$\Gamma$ is a graph $\{(t, G(t)\}$ over $B'_{\beta r(y)}(y)$ with
\begin{equation*}
    |G(t)| \leq C r(y)
\end{equation*}
implies
\[
    \mathcal{C}_{\beta r(y)}(y)
\cap \Gamma \subseteq B_{c_0r(y)}(y) \cap \Gamma
\]
for a large enough numerical constant $c_0>0$.
Suppose that \eqref{fullballs} is not true: then there exist $y'\in
\del S(-\delta')$ and $y\in \del S(-\delta)$ with $r' = r(y')$,
$r=r(y)$ arbitrarily large such that $|y-y'|<c_0r'$. Denote the
projections of $y'$ and $y$ onto $\real^8$ by $(\vec{u'}, \vec{v'})$
and $(\vec{u}, \vec{v})$, respectively. Obviously, $r/r' \sim 1$ and
for $0\leq \delta<\delta'<2$
\begin{align*}
    |y'-y|^2 & = |\vec{u}-\vec{u'}|^2 + |\vec{v}-\vec{v'}|^2 + |F(r',(1+r')^{-\delta'}) -
F(r,(1+r)^{-\delta})|^2 \\
& \geq (u - u')^2 + (v-v')^2 + |F(r',(1+r')^{-\delta'}) -
F(r,(1+r)^{-\delta})|^2 \\
   &\geq (r'-r)^2 + |F_{\infty}(r',(1+r')^{-\delta'}) -
F_{\infty}(r,(1+r)^{-\delta})|^2 - C'r^{-2\sigma} \\
& \geq - C'r^{-2\sigma} + (r'-r)^2 + c'
\left|r^{\delta-\delta'}\Big(\frac{r}{r'}\Big)^{3-\delta}-1\right|^2(r')^{2(3-\delta')}
\gg (r')^2
\end{align*}
which is a contradiction.
\end{proof}

\subsection{The free boundary super and subsolution.}

In correspondence with the form of the supersolution ansatz
\eqref{ansatz}, define the subsolution ansatz
$v:\mathcal{B}_{\Gamma_{\alpha}} \rightarrow \real$ by
\begin{equation}\label{subansatz}
    v(y,z) = -h_0^{\alpha}(y) + z h_1^{\alpha}(y) + z^2(-h_2^{\alpha}(y)) + z^3 h_3^{\alpha}(y) + z^5
    h_5^{\alpha}(y).
\end{equation}
Since we require $h_0^{\alpha}>0$ and $h_2^{\alpha}\ > 0$, we will
automatically have $v<w$ in $\mathcal{B}_{\Gamma_{\alpha}}$. Also,
\begin{equation*}
    0< w-v = 2 (h_0^{\alpha} + z^2h_2^{\alpha}) = O\left(\frac{\alpha^p}{1+\alpha
    r}\right).
\end{equation*}

\begin{prop}\label{Sec_ViscSup}
Fix $0<p<1$. There exist $C_1,C_2 >0$ and $\alpha_0 >0$ such that
for all small enough $\alpha \leq \alpha_0$, $w$ given by
\eqref{ansatz} satisfies
\begin{align}\label{punchline}
 & \Delta w  < 0 \quad \text{in}\quad \mathcal{B}_{\Gamma_{\alpha}}
    \notag \\
    & |\nabla w |^2  > 1 \quad \text{on}~ \{w=1\} \quad
    \text{and} \\ & |\nabla w |^2 < 1 \quad \text{on}~
    \{w=-1\},\notag
\end{align}
while
\begin{align}\label{subpunchline}
 & \Delta v  > 0 \quad \text{in}\quad
 \mathcal{B}_{\Gamma_{\alpha}}\notag
    \\
    & |\nabla v |^2  < 1 \quad \text{on}~ \{v=1\} \quad
    \text{and} \\ & |\nabla v |^2 > 1 \quad \text{on}~
    \{v=-1\}.\notag
\end{align}
Moreover, $0 < w-v \leq \frac{1}{2}$, $\del_{x_9}v >0$ and
$\del_{x_9}w>0$ in $\mathcal{B}_{\Gamma_{\alpha}}$.
\end{prop}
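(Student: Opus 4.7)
The plan is to reduce each inequality in \eqref{punchline} and \eqref{subpunchline} to a pointwise weighted inequality on $\Gamma$, then realize the free parameters $h_0$, $h_2'$ using the two types of supersolutions for $J_{\Gamma}$ produced by Propositions \ref{h'} and \ref{h"}. Using the prescribed scaling $h_0^{\alpha}(y)=\alpha^p h_0(\alpha y)$, $h_2'^{\alpha}(y)=\alpha^{2+p}h_2'(\alpha y)$ together with Lemma \ref{lemma_scaling}, the Laplacian formula of Lemma \ref{lemma_lap} rewrites as
\[
\Delta w(y,z) \;=\; \alpha^{2+p}\bigl[(J_{\Gamma} h_0 + h_2')(\alpha y)\bigr] \;-\; z^2 \alpha^{3} H_3(\alpha y) \;+\; O\!\Big(\tfrac{\alpha^{4+p}}{(1+\alpha r)^{5}}\Big),
\]
once the level-surface correction $(\Delta_{\Gamma_{\alpha}(z)}-\Delta_{\Gamma_{\alpha}})h_0^{\alpha}=O(z\alpha^{3+p}(1+\alpha r)^{-4})$ from \eqref{DeltaCompAlpha} is absorbed into the remainder. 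Since $p<1$, the $\alpha^{2+p}$ bracket is dominant; it suffices to arrange $J_{\Gamma}h_0+h_2'$ to be strictly negative with enough room to absorb $z^2 \alpha^{1-p}|H_3|+O(\alpha^{2-p}(1+r)^{-5})$ uniformly in $|z|\le 2$ and $\alpha y\in\Gamma$.

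The crucial geometric input is a refinement of $|H_3|=O(r^{-3})$ near the Simons cone. The antisymmetry $F(u,v)=-F(v,u)$ is induced by an orthogonal involution $\sigma$ of $\real^9$ preserving $\Gamma$, whose fixed locus is $S\cap\Gamma$ and which reverses the upward normal; it therefore negates every principal curvature, so $H_3\equiv 0$ on $S\cap\Gamma$. Combining this vanishing with \eqref{curv3clo} and the fact that on the model graph $H_{\infty,3}(r,\theta)=r^{-3}F_3(\theta)$ with $F_3$ smooth and odd about $\theta=\pi/4$, one extracts the global estimate
\[
|H_3(y)| \;\le\; C \Big( \tfrac{|\theta(y)-\pi/4|}{1+r(y)^3} + \tfrac{1}{1+r(y)^{4+\epsilon}} \Big), \qquad \epsilon\in(0,1),
\]
which is precisely the weight pattern supplied by $J_{\Gamma}(h^{(2)})$ and $J_{\Gamma}(h^{(1)})$. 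Accordingly set $h_0:=A_1 h^{(2)}+A_2 h^{(1)}$, with $h^{(1)}$, $h^{(2)}$ the Type~1 and Type~2 supersolutions from Propositions \ref{h'} and \ref{h"} (the former strictly positive, ensuring $h_0>0$) and $A_1,A_2>0$ large enough that $J_{\Gamma}h_0\le -(|\theta-\pi/4|(1+r^3)^{-1}+(1+r^{4+\epsilon})^{-1})$; let $h_2'$ be any smooth positive function on $\Gamma$ obeying \eqref{sizeh2'} (for instance, a smoothed positive multiple of $(1+r^3)^{-1}$), and let $C_1,C_2$ be the weighted-norm sizes thereby produced. Then $\Delta w<0$ in $\mathcal{B}_{\Gamma_{\alpha}}$ for all $\alpha\le\alpha_0$ small. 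The gradient inequalities follow directly from Lemma \ref{supGrad}: on $\{w=\pm 1\}$ one has $|\nabla w|^2-1=\pm 2h_2'^{\alpha}(1+O(\alpha^p))+O(\alpha^{2+2p}(1+\alpha r)^{-4})$, and since $h_2'^{\alpha}=\Theta(\alpha^{2+p}(1+\alpha r)^{-3})$ strictly dominates the error for small $\alpha$, the supersolution gradient conditions hold. Monotonicity $\partial_{x_9}w>0$ is Lemma \ref{ansmonotoni}.

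The subsolution conditions are entirely symmetric: flipping the signs of $h_0^{\alpha}$ and $h_2^{\alpha}$ in the ansatz is equivalent to replacing $(h_0,h_2')$ by $(-h_0,-h_2')$ in Lemmas \ref{lemma_lap} and \ref{supGrad} (the $z^2 H_3$ term is independent of $h_0,h_2'$ at the relevant order), so $\Delta v=-\alpha^{2+p}[J_{\Gamma}h_0+h_2'](\alpha y)-z^2\alpha^3 H_3+O(\cdots)>0$ by the same domination, and the gradient inequalities on $\{v=\pm 1\}$ flip to yield $|\nabla v|^2<1$ on $\{v=1\}$ and $|\nabla v|^2>1$ on $\{v=-1\}$. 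The proof of Lemma \ref{ansmonotoni} transfers verbatim to $v$ because $\partial_z v\approx 1$ remains positive. Finally, $w-v=2(h_0^{\alpha}+z^2 h_2^{\alpha})$ is strictly positive (both $h_0^\alpha$ and $h_2^\alpha$ are positive) and bounded by $C\alpha^p$ uniformly in the band, so $w-v\le\frac{1}{2}$ once $\alpha_0$ is taken small.

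The main obstacle is the sharp $H_3$ refinement above: it must precisely match the weight in the Type~2 supersolution of Proposition \ref{h"}, so that the Simons-cone degeneracy on both sides cancels. A parallel, more tedious challenge is to verify that every auxiliary error term in Lemmas \ref{lemma_lap}, \ref{supGrad} and \eqref{DeltaCompAlpha} is strictly higher-order in $\alpha$ than the leading $\alpha^{2+p}$, which is what confines $p$ to $(0,1)$ and forces $\alpha_0$ to be chosen small depending on $p$, $C_1$, $C_2$ and the geometry of $\Gamma$.
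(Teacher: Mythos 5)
Your overall scaffold is right — you reduce $\Delta w<0$ to a weighted inequality on $\Gamma$, build $h_0$ out of the Type~1 and Type~2 barriers, and run Lemma~\ref{supGrad} for the gradient conditions — but there is a concrete error that sinks the superharmonicity, plus a structural oversight in how the error terms are handled.

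The error is your choice of $h_2'$. You suggest taking $h_2'$ to be ``a smoothed positive multiple of $(1+r^3)^{-1}$'', which indeed satisfies \eqref{sizeh2'}, but \eqref{sizeh2'} is only an upper bound and ignores a crucial lower-bound constraint near the Simons cone. In the Laplacian estimate, $h_2'^{\alpha}$ enters with a $+$ sign at order $\alpha^{2+p}$, and it must be beaten pointwise by the negative $\alpha^{2+p}\bigl[J_{\Gamma}h_0\bigr](\alpha y)$. But $J_\Gamma h_0$ supplied by the Type~1 and Type~2 barriers only gives a negative of size $r^{-4-\eps}+|\theta-\pi/4|\,r^{-3}$, and the second piece vanishes on the Simons cone. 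With your $h_2'\sim c\,r^{-3}$ uniformly in $\theta$, at $\theta=\pi/4$ and $r$ large one gets $J_\Gamma h_0 + h_2' \gtrsim c\,r^{-3}-r^{-4-\eps}>0$, so $\Delta w>0$ there. The paper's choice $h_2'=\tfrac12\bigl((1+r^{4+\eps})^{-1}+c\cos^2(2\theta)(1+r^3)^{-1}\bigr)$ is not cosmetic: $\cos^2(2\theta)$ vanishes quadratically at the cone and is chosen with $2c\cos^2(2\theta)\le|\theta-\pi/4|$, so that the $r^{-3}$-weight of $h_2'$ is entirely subsumed by the Type~2 negative, while the $r^{-4-\eps}$-weight is subsumed by the Type~1 negative. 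Your $h_2'$ lacks this cone degeneracy, so \eqref{punchline} fails.

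The second, related issue is your claim that the level-surface correction $(\Delta_{\Gamma_\alpha(z)}-\Delta_{\Gamma_\alpha})h_0^\alpha = O(\alpha^{3+p}(1+\alpha r)^{-4})$ can be ``absorbed into the remainder'', and more broadly that one simply needs the auxiliary errors to be of strictly higher order in $\alpha$. The remainder in \eqref{lap_ansatz} is $O(\alpha^{4+p}(1+\alpha r)^{-5})$, which the correction is not, and the Type~1 negative $\alpha^{2+p}(1+(\alpha r)^{4+\eps})^{-1}$ does not dominate $\alpha^{3+p}(1+(\alpha r)^4)^{-1}$ uniformly either — the ratio is $\alpha(\alpha r)^\eps$, which is unbounded as $r\to\infty$. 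So a pure ``one more power of $\alpha$'' argument fails. The paper fixes this by splitting into the near-cone region $S_\alpha(-1)$ and its complement: away from the cone the Type~2 negative is $\gtrsim\alpha^{2+p}(1+(\alpha r)^4)^{-1}$, which wins; near the cone one uses the \emph{refined} decay \eqref{refinedecay} of the Type~2 barrier, which upgrades the correction to $O(\alpha^{3+p}(1+(\alpha r)^{4+\delta\tau})^{-1})$, beaten by the Type~1 negative under the constraint $\eps<\delta\tau$. The same region splitting is needed for the gradient conditions of Lemma~\ref{supGrad}. This is precisely why Proposition~\ref{h"} carries the extra decay statement \eqref{refinedecay}, and your proposal does not use it.

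Your symmetry argument for $H_3\equiv 0$ on $S\cap\Gamma$ (via the normal-reversing involution) is a more conceptual route than the paper's direct computation of $H_{\infty,3}$ in Appendix~A, and it is essentially correct once completed by composing the swap $(\vec u,\vec v,x_9)\mapsto(\vec v,\vec u,-x_9)$ with suitable block rotations so as to fix an arbitrary point of $S\cap\Gamma$; but note that a qualitative vanishing is not enough — you still need the quantitative $|H_3|=O(|\theta-\pi/4|(1+r^3)^{-1})$, which in the paper comes from the explicit $H_{\infty,3}$ estimate together with \eqref{curv3clo}.
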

We immediately derive as a corollary:
\begin{coro}\label{punches}
Let $v$, $w$, $0<\alpha \leq \alpha_0$ be as in Proposition
\ref{Sec_ViscSup} above. Then the function $W:\real^9\rightarrow
\real$, given by
\[
    W(x) =  \left\{ \begin{array}{cl}w(x) & \quad \text{for}~ x\in \mathcal{B}_{\Gamma_{\alpha}}\cap \{|w|\leq 1\}  \\
    1 & \quad \text{for}~ x \in (\mathcal{B}_{\Gamma_{\alpha}}\cap \{|w|\leq
    1\})^c \cap \{x_9 > F(x')\} \\
    -1 & \quad \text{for}~ x \in (\mathcal{B}_{\Gamma_{\alpha}}\cap
\{|w|\leq
    1\})^c \cap \{x_9 < F(x')\}
    \end{array}\right.
\]
is a classical strict supersolution to \eqref{FBP}, while the
function $V:\real^9 \rightarrow \real$, given by
\begin{equation*}
    V(x) =  \left\{ \begin{array}{cl}v(x) & \quad \text{for}~ x\in \mathcal{B}_{\Gamma_{\alpha}}\cap \{|v|\leq 1\}  \\
    1 & \quad \text{for}~ x \in (\mathcal{B}_{\Gamma_{\alpha}}\cap \{|v|\leq
    1\})^c \cap \{x_9 > F(x')\} \\
    -1 & \quad \text{for}~ x \in (\mathcal{B}_{\Gamma_{\alpha}}\cap
\{|v|\leq
    1\})^c \cap \{x_9 < F(x')\}
    \end{array}\right.
\end{equation*}
is a classical strict subsolution. Moreover, $0 \leq W- V \leq
\frac{1}{2}$, both $V$ and $W$ are monotonically increasing in $x_9$
and strictly increasing in $x_9$ inside $\Omega_{\text{in}}(V)$,
$\Omega_{\text{in}}(W)$, respectively.
\end{coro}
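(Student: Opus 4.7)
The plan is to verify each clause of Definition \ref{defClaSuper} for $W$ (and the analogous version for $V$) directly from Proposition \ref{Sec_ViscSup}, the monotonicity estimate in Lemma \ref{ansmonotoni}, and the location of the free boundary computed in Lemma \ref{lemma_dzw}. The construction is piecewise, so the real content of the argument is geometric: the free boundary surfaces $\{w = \pm 1\}$ must live strictly interior to the band $\mathcal{B}_{\Gamma_\alpha}$, and the piecewise extension to the constants $\pm 1$ outside must match the boundary values in a continuous way.

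First, I would pinpoint the interphase of $W$. In Fermi coordinates \eqref{Fermialpha}, the estimate $\del_z w = 1 + O(\alpha^2)$ that underlies Lemma \ref{lemma_dzw} shows that for all sufficiently small $\alpha$ the map $z \mapsto w(y,z)$ is strictly increasing on $(-2,2)$ and crosses $\pm 1$ at unique heights $z_\pm(y) = \pm 1 + O(\alpha^p)$. The implicit function theorem, applied with $\del_z w > 0$, then identifies $\{w = \pm 1\} = \{z = z_\pm(y)\}$ as $C^2$ graphs over $\Gamma_{\alpha}$, contained in $\mathcal{B}_{\Gamma_\alpha}(3/2)$ for all $\alpha \leq \alpha_0$. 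In particular, $\{|w| < 1\} = \{z_-(y) < z < z_+(y)\}$ is an open strip compactly contained in the $z$-direction of the band. On the complement of this strip within the band, $\mathrm{sgn}(w) = \mathrm{sgn}(z)$, which agrees with the $\pm 1$ assignment in the exterior pieces of the definition of $W$; consequently $W$ is continuous across the lateral boundary $\del \mathcal{B}_{\Gamma_\alpha}$, and $\Omega_{\text{in}}(W)$ is precisely the smooth strip above.

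With the geometry in hand, the remaining clauses of Definition \ref{defClaSuper} are immediate. Since $W \equiv w$ on $\overline{\Omega_{\text{in}}(W)}$, the regularity $W \in C^2(\overline{\Omega_{\text{in}}(W)})$ and the strict superharmonicity $\Delta W = \Delta w < 0$ come from \eqref{punchline}; the free boundary decomposes as $F^{\pm}(W) = \{z = z_\pm(y)\}$, each a $C^2$ graph; and the strict inequalities $|\nabla W| > 1$ on $F^{+}(W)$ and $|\nabla W| < 1$ on $F^{-}(W)$ are the remaining statements of \eqref{punchline}. Monotonicity $\del_{x_9} W \geq 0$ globally, with strict positivity inside $\Omega_{\text{in}}(W)$, follows from Lemma \ref{ansmonotoni} on the band and from the constancy of $W$ outside. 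The argument for $V$ is completely parallel: $v$ is also strictly $z$-monotone for small $\alpha$ by the same estimate $\del_z v = 1 + O(\alpha^2)$, its level sets $\{v = \pm 1\}$ are $C^2$ graphs $\{z = \tilde{z}_\pm(y)\}$ with $\tilde{z}_\pm = \pm 1 + O(\alpha^p)$ strictly inside the band, and \eqref{subpunchline} supplies the subsolution inequalities.

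The sandwich $0 \leq W - V \leq \tfrac{1}{2}$ reduces inside the common interior strip to $0 < w - v = 2(h_0^{\alpha} + z^2 h_2^{\alpha}) = O(\alpha^p / (1+\alpha r))$, which is bounded by $\tfrac{1}{2}$ after further shrinking $\alpha_0$; outside, both $W$ and $V$ take identical $\pm 1$ values. I do not expect any essential obstacle beyond the geometric Step~1: the quantitative control $|z_{\pm} \mp 1| = O(\alpha^p)$ and $|\tilde{z}_{\pm} \mp 1| = O(\alpha^p)$ from Lemma \ref{lemma_dzw} is precisely what guarantees a uniform margin between the free boundary graphs and $\del \mathcal{B}_{\Gamma_\alpha}$, which in turn makes the piecewise definition of $W$ and $V$ smooth across $F^{\pm}$ and continuous across the boundary of the band.
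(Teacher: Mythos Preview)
Your proof is correct and is precisely the natural unpacking of why the corollary follows from Proposition~\ref{Sec_ViscSup}; the paper itself gives no separate argument, simply stating ``We immediately derive as a corollary.'' Your care in locating $\{w=\pm1\}$ strictly inside $\mathcal{B}_{\Gamma_\alpha}$ via $z_\pm=\pm1+O(\alpha^p)$ and in checking continuity of the piecewise definition is exactly the implicit content the paper relies on. One minor point: in your last paragraph, the phrase ``outside, both $W$ and $V$ take identical $\pm 1$ values'' glosses over the mixed regions where exactly one of $w,v$ has left $[-1,1]$; but since $W=\max(-1,\min(1,w))$ and $V=\max(-1,\min(1,v))$ on the band and clipping is order-preserving and $1$-Lipschitz, the bound $0\leq W-V\leq w-v\leq\tfrac12$ still follows from Proposition~\ref{Sec_ViscSup} directly.
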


\begin{proof}[Proof of Proposition \ref{Sec_ViscSup}]
Fix some $0<\delta < \frac{1}{2}$ and $0<\eps< \delta \tau$, where
$\frac{1}{2}<\tau < \frac{2}{3}$ is provided by Proposition \ref{h"}
and let $h'> 0$, $h''\geq 0$ be the $J_{\Gamma}$-supersolutions
given by Lemma \ref{h'} and \ref{h"}, respectively. Remember that we
only need to set the values of $h_0$ and $h_2'$ in order to
determine the ansatz \eqref{ansatz} completely. So, let
\begin{equation*}
    h_0 = h' + h'',
\end{equation*}
and
\begin{equation*}
    h_2' = \frac{1}{2} \Big(\frac{1}{1+r^{4+\eps}} + \frac{c\cos^2(2\theta)}{1+r^3}\Big),
\end{equation*}
where $c>0$ is such that $2c\cos^2(2\theta) \leq |\theta-\pi/4|$.
Set
\begin{align*}
    C_1 & = \|D^2_{\Gamma}h_0\|_{3,\infty}  + \|D_{\Gamma}h_0\|_{2,\infty} + \|h_0\|_{1,\infty}  \\
    C_2 & = \|D^2_{\Gamma}h_2'\|_{5,\infty}  + \|D_{\Gamma}h_2'\|_{4,\infty} +
    \|h_2'\|_{3,\infty}.
\end{align*}

Claim that for all small enough $\alpha > 0$, $\Delta w < 0$ in
$\mathcal{B}_{\Gamma_{\alpha}}$. This is a consequence of the
following computations.
\begin{itemize}
\item For $\alpha>0$ small enough,
\begin{align*}
    J_{\Gamma_{\alpha}}h_0^{\alpha} - z^2 H_{3,\alpha} & \leq - \frac{\alpha^{2+p}}{1+(\alpha r)^{4+\eps}} - \frac{\alpha^{2+p}|\theta-\pi/4|}{1+(\alpha r)^{3}} + O\Big(\frac{\alpha^3 |\theta-\pi/4|}{1+(\alpha
    r)^3}\Big) \\
    & = - \frac{\alpha^{2+p}}{1+(\alpha r)^{4+\eps}} -  \frac{\alpha^{2+p}}{2}\frac{|\theta-\pi/4|}{1+(\alpha
    r)^{3}},
\end{align*}
so that
\begin{equation}\label{lap_major}
    J_{\Gamma_{\alpha}}h_0^{\alpha} - z^2 H_{3,\alpha} +
    h_2'^{\alpha} \leq -\frac{\alpha^{2+p}}{4} \left( \frac{2}{1+(\alpha r)^{4+\eps}} +  \frac{|\theta-\pi/4|}{1+(\alpha
    r)^{3}}\right).
\end{equation}

\item In $S_{\alpha}(-1) = \{|\theta-\frac{\pi}{4}| \leq (1+\alpha r)^{-1}\}$,
Proposition \ref{h"} and \eqref{DeltaCompAlpha} imply
\begin{align}
    (\Delta_{\Gamma_{\alpha}(z)} + |A_{\alpha}|^2) h_0^{\alpha}
    =
    J_{\Gamma_{\alpha}}h_0^{\alpha}  + O\Big(\frac{\alpha^{3+p}}{1+(\alpha
    r)^{4+\delta \tau}}\Big) \label{lap_rem1}.
\end{align}
Then \eqref{lap_ansatz}, \eqref{lap_major} and \eqref{lap_rem1}
yield the desired
\begin{equation*}
    \Delta w(y,z) < 0 \quad \text{for } y\in S_{\alpha} ~ \text{and}~ (y,z)\in \mathcal{B}_{\Gamma_{\alpha}},
\end{equation*}
and all small enough $\alpha >0$.

\item In $S_{\alpha}^{c}(-1)= \{|\theta-\frac{\pi}{4}| > (1+ \alpha
r)^{-1}\}$, \eqref{lap_major} can be estimated further by
\begin{equation}\label{lap_major2}
     J_{\Gamma_{\alpha}}h_0^{\alpha} - z^2 H_{3,\alpha} +
    h_2'^{\alpha} \leq -\frac{\alpha^{2+p}}{4} \left(\frac{2}{1+(\alpha r)^{4+\eps}} + \frac{c'}{1+(\alpha
    r)^{4}}\right).
\end{equation}
Because of \eqref{DeltaCompAlpha} we have
\begin{align}
    (\Delta_{\Gamma_{\alpha}(z)} + |A_{\alpha}|^2) h_0^{\alpha} =  J_{\Gamma_{\alpha}}h_0^{\alpha}
    + O\Big(\frac{\alpha^{3+p}}{1+(\alpha r)^4}\Big)
    \label{lap_rem2}.
\end{align}
Thus, \eqref{lap_ansatz}, \eqref{lap_major2} and \eqref{lap_rem2}
yield
\begin{equation*}
    \Delta w(y,z) < 0 \quad \text{for } y\in S_{\alpha}^c ~ \text{and}~ (y,z)\in \mathcal{B}_{\Gamma_{\alpha}},
\end{equation*}
and all small enough $\alpha >0$.
\end{itemize}

To verify that necessary gradient conditions \eqref{subpunchline}
are also met, we need to check that for small enough $\alpha>0$,
$h_2'^{\alpha}$ majorizes both $|A_{\alpha}|^2(h_0^{\alpha})^2$ and
$|\nabla_{\Gamma_{\alpha(z_{\pm})}}h_0^{\alpha}|^2$ (see Lemma
\ref{supGrad}). Indeed,
\begin{itemize}
\item in $S_{\alpha}(-\frac{1}{2})=\{|\theta-\frac{\pi}{4}| \leq (1+\alpha r)^{-\frac{1}{2}}\}$
\begin{equation*}
    |A_{\alpha}|^2(h_0^{\alpha})^2 +
    |\nabla_{\Gamma_{\alpha(z_{\pm})}}h_0^{\alpha}|^2 = O\Big( \frac{\alpha^{2+2p}}{1+(\alpha r)^{4+2\delta\tau}}\Big)
\end{equation*}
is dominated by $h_2'^{\alpha} \geq
\frac{1}{2}\frac{\alpha^{2+p}}{1+(\alpha r)^{4+\eps}}$;
\item in $S_{\alpha}^c(-\frac{1}{2})=\{|\theta-\frac{\pi}{4}| > (1+\alpha r)^{-\frac{1}{2}}\}$
\begin{equation*}
    |A_{\alpha}|^2(h_0^{\alpha})^2 +
    |\nabla_{\Gamma_{\alpha(z_{\pm})}}h_0^{\alpha}|^2 = O\Big(\frac{\alpha^{2+2p}}{1+(\alpha
r)^4}\Big)
\end{equation*}
is dominated by $h_2'^{\alpha} \geq
\frac{1}{2}\frac{\alpha^{2+p}\cos^2(2\theta)}{1+(\alpha r)^{3}} \geq
\frac{c''\alpha^{2+p}}{1+(\alpha r)^{4}}$.
\end{itemize}

Checking that $v$ meets the conditions for a subsolution is
absolutely analogous. In view of Lemma \ref{ansmonotoni},
$\del_{x_9}w>0$ and similarly $\del_{x_9} v>0$.
\end{proof}

\section{The solution. Existence and
regularity.}\label{Section_FBexireg}

We have at our disposal a globally defined classical strict
subsolution $V$ to \eqref{FBP} lying below a classical strict
supersolution $W$ both of which are monotonically increasing in
$x_9$ (in fact, strictly increasing in their interphases
$\real^9_{\text{in}}$). In this section we will explain why this
engenders the existence of a classical solution $u$ to \eqref{FBP},
trapped in-between. Moreover, the solution will inherit some of the
nice properties of the barriers $V$, $W$, such as monotonicity in
$x_9$ and graph free boundaries $F^{+}(u)$ and $F^{-}(u)$.

We will construct $u$ as a \emph{global minimizer} of $I$,
constrained to lie between $V$ and $W$.
\begin{definition}
A function $u\in H^1_{\text{loc}}(\real^n)$ is a global minimizer of
$I$, constrained between $V\leq W$ if for any bounded right cylinder
$\Omega\subset\real^9$
\begin{equation*}
    I(u,\Omega) \leq I(v,\Omega) \quad \text{for all}~ v\in H^1(\Omega)~ \text{such that}\quad V\leq v \leq W
    ~ \text{and} ~ u-v\in H^1_0(\Omega).
\end{equation*}
\end{definition}

As usual, we obtain a global (constrained) minimizer $u$ as a
sequence of local (constrained) minimizers on expanding bounded
domains. For the purpose, we will verify that local minimizers are
Lipschitz continuous with a universal bound on the local Lipschitz
constant. This is done in the spirit of \cite{DS}.

Afterwards, we will show that a global minimizer $u$ which, in
addition, meets certain simple geometric constraints, is actually a
classical solution to our free boundary problem. This is achieved
almost for free --  by applying the regularity theory of minimizers
to the energy functional $I_0$, developed in \cite{DS} and
\cite{DSJ}, to the functions $1\pm u$.




\subsection{Existence of a local minimizer.}

Let $\Omega \subset \real^n$ be a cylinder
\begin{equation*}
    C_{R,h} = \{x = (x',x_n) \in \real^{n-1}\times\real: |x'|<R, |x_n|< h \}
\end{equation*}
and consider the minimization problem for the functional
\begin{equation*}
    I(v,\Omega) = \int_{\Omega} |\nabla v|^2 + \chrc{|v|<1},
\end{equation*}
where $v$ ranges over the following closed convex subset of
$H^1(\Omega)$:
\begin{equation*}
    S(\Omega) = \{v \in H^1(\Omega): V\leq v \leq W\quad \text{a.e.}\}.
\end{equation*}
Let us show that there exists $u\in S(\Omega)$ for which the infimum
of $I(\cdot, \Omega)$ over $S(\Omega)$ is attained.

\begin{prop}[Existence of monotone local minimizers]\label{ExiMin}

There exists $u\in S(\Omega)$ such that
\begin{equation*}
    I(u,\Omega) = m := \inf_{v\in S(\Omega)} I(v,\Omega).
\end{equation*}
Moreover, given that $V$ and $W$ are monotonically increasing in the
$x_n$-variable, $u$ can also be taken to be monotonically increasing
in the $x_n$-variable.
\end{prop}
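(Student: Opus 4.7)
\emph{Existence.} I would start with the direct method of the calculus of variations: $S(\Omega)$ is nonempty (it contains $W$) and every $v\in S(\Omega)$ satisfies $|v|\leq 1$ a.e., since $|V|,|W|\leq 1$. A minimizing sequence $\{u_k\}\subset S(\Omega)$ therefore has uniformly bounded Dirichlet integral and $L^\infty$-norm, hence uniformly bounded $H^1$-norm. Pass to a subsequence converging weakly in $H^1(\Omega)$, strongly in $L^2$, and pointwise a.e.\ to some $u$; the inequalities $V\leq u_k\leq W$ persist under a.e.\ limits, so $u\in S(\Omega)$. Weak lower semicontinuity of the Dirichlet integral, combined with Fatou's lemma applied to $\chrc{(-1,1)}(u_k)$---lower semicontinuous in $u$ because $(-1,1)$ is open---gives $I(u,\Omega)\leq \liminf_k I(u_k,\Omega)=m$, so $u$ attains the infimum.

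\emph{Monotonicity via slice-wise rearrangement.} To arrange monotonicity I would rearrange the minimizer $u$ slice-by-slice in $x_n$: for each fixed $x'$ with $|x'|<R$, let $\tilde u(x',\cdot)$ be the monotone increasing rearrangement of $u(x',\cdot)$ on $(-h,h)$, i.e., the unique nondecreasing function equimeasurable with $u(x',\cdot)$. Then $\tilde u$ is monotone increasing in $x_n$ by construction and lies in $H^1(\Omega)$ by the standard slice-wise regularity theory for one-dimensional rearrangements. To check $\tilde u\in S(\Omega)$, I would exploit the monotonicity of $W$ in $x_n$: since $u(x',s)\leq W(x',s)\leq W(x',x_n)$ for every $s\leq x_n$, one has $|\{s\in(-h,h):u(x',s)>W(x',x_n)\}|\leq h-x_n$, and equimeasurability together with monotonicity of $\tilde u(x',\cdot)$ then force $\tilde u(x',x_n)\leq W(x',x_n)$; the bound $\tilde u\geq V$ is symmetric.

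\emph{Energy inequality and main obstacle.} Equimeasurability in $x_n$ yields $|\{s:|\tilde u(x',s)|<1\}|=|\{s:|u(x',s)|<1\}|$ for a.e.\ $x'$, so the potential term in $I$ is preserved after integrating in $x'$, and the P\'olya--Szeg\H{o} inequality for one-direction monotone rearrangement gives $\int_\Omega|\nabla\tilde u|^2\leq \int_\Omega|\nabla u|^2$. Adding, $I(\tilde u,\Omega)\leq I(u,\Omega)=m$, so $\tilde u$ is itself a minimizer and is monotone in $x_n$. The one technically delicate input is this full P\'olya--Szeg\H{o} inequality for monotone rearrangement in a single coordinate direction---in particular the control of the tangential gradient $\nabla_{x'}\tilde u$---which I would derive via continuous Steiner symmetrization in the sense of Brock, or via slice-wise polarization.
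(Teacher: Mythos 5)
Your argument matches the paper's almost step for step: the direct method with weak $H^1$-compactness and lower semicontinuity of the potential term (the paper defers this to the Alt--Caffarelli argument, you prove it via Fatou and lower semicontinuity of $\chrc{(-1,1)}$), followed by slice-wise monotone increasing rearrangement in $x_n$, exactly as in the paper. The paper simply cites Kawohl's list of rearrangement properties (equimeasurability, order preservation, $f^*=f$ for monotone $f$, and the Dirichlet-energy inequality) rather than verifying them from scratch; you re-derive the order-preservation and constraint-propagation step explicitly and correctly flag the genuinely nontrivial ingredient -- the P\'olya--Szeg\H{o} inequality for one-directional monotone rearrangement, including control of $\nabla_{x'}\tilde u$ -- which is indeed the one piece the paper does not reprove but rather takes from the literature.
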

\begin{proof}
For convenience use the simplified notation $I(v) = I(v,\Omega)$.
Obviously, the infimum $m$ is non-negative and finite:
\begin{equation*}
    0\leq m \leq C_0:=\min(I(V), I(W)).
\end{equation*}
Take a sequence $u_k\in S(\Omega)$ such that $C_0\geq I(u_k)\searrow
m$. Then
\begin{equation*}
    \|u_k\|^2_{H^1} = \|u_k\|^2_{L^2} + \|\nabla u_k\|^2_{L^2} \leq
    |\Omega| + C_0
\end{equation*}
is uniformly bounded, so by compactness we can extract a subsequence
(call it $u_k$ again) such that
\begin{equation*}
    u_k \to u \quad \text{in}~ L^2 ~ \text{and a.e.} \quad \text{and}
    \quad \nabla u_k \rightharpoonup \nabla u \quad \text{weakly in } L^2
\end{equation*}
for some $u\in S(\Omega)$. Claim that $I(u) = m$. It suffices to
show that $I$ is lower semicontinuous with respect to the weak-$H^1$
topology, i.e.
\begin{equation}\label{lowersemi}
    I(u) \leq \liminf_{k\to \infty} I(u_k)
\end{equation}
which is done analogously as in \cite{AC}.


We can produce a minimizer, which is monotonically increasing in the
$x_n$-variable by applying a rearrangement. A monotone-increasing
rearrangement in the $x_n$-variable, $f \rightarrow f^*$ satisfies
the following properties (cf. \cite{Kawohl}):
\begin{enumerate}
\item If $f$ is monotonically increasing in the $x_n$-variable, $f^* =
f$.
\item The functions $f$ and $f^*$ are equimeasurable, i.e. $|f^{-1}(O)| =
|(f^*)^{-1}(O)|$ for any open interval $O\subseteq \real$.
\item The mapping $f\rightarrow f^*$ is order-preserving, i.e. if $f\leq
g$ then $f^*\leq g^*$.
\item If $f\in H^1(C_{R,h})$, then $f^* \in H^1(C_{R,h})$ and
\[
    \|\nabla f^*\|^2_{L^2} \leq  \|\nabla f\|^2_{L^2}.
\]
\end{enumerate}

Since $V, W$ are monotonically increasing in the $x_n$-variable,
$V^* = V$ and $W^* = W$; thus $V \leq u^* \leq W$ by order
preservation under rearrangements. Moreover, $u^* \in H^1(\Omega)$,
so that $u^* \in S(\Omega)$ and because of properties $2$ and $4$
above,
\begin{equation*}
    m\leq I(u^*) \leq I(u) = m.
\end{equation*}
Thus, $u^*$ is a minimizer to $I$ over $S(\Omega)$, monotonically
increasing in the $x_n$-variable.
\end{proof}

\subsection{Lipschitz continuity of local minimizers.}

Employing standard arguments, we first establish continuity of local
minimizers before we prove Lipschitz continuity with a universal
bound on the local Lipschitz constant.

We adapt the technique of \emph{harmonic replacements} used by
\cite{AC}.
\begin{definition}
The harmonic replacement of $u$ in the ball $B \subset \Omega$ is
the unique function $v\in H^1(\Omega)$ that is harmonic in $B$ and
agrees with $u$ on $\Omega\setminus B$.
\end{definition}

Let
\begin{equation*}
    \mathcal{B}_{V,W} = \Omega_{\text{in}}(V)\cup
    \Omega_{\text{in}}(W)
\end{equation*}
and note that $V$ is subharmonic in $\mathcal{B}_{V,W}$ whereas $W$
is superharmonic in $\mathcal{B}_{V,W}$. 

Below we show that the function $u$, constructed in Proposition
\ref{ExiMin}, is continuous in $\mathcal{B}_{V,W}$.
\begin{prop}[Continuity]\label{contMin}
Let $D\Subset \mathcal{B}_{V,W} \subseteq \Omega$. Then the
minimizer $u$, constructed in Proposition \ref{ExiMin}, is in a
H\"older class $C^{\alpha}(D)$ for some $\alpha
>0$, depending on $D$. In particular, $u$ is continuous in
$\mathcal{B}_{V,W}$.
\end{prop}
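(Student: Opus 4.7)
\medskip
\textbf{Proof plan.} The strategy is to adapt the harmonic-replacement technique of Alt and Caffarelli to our two-sided obstacle setting and then run a standard Morrey--Campanato iteration. Fix $x_0 \in D$ and a small ball $B = B_r(x_0) \Subset D$. Let $\bar v \in H^1(\Omega)$ be the harmonic replacement of $u$ in $B$ (harmonic in $B$, equal to $u$ outside). Since $\bar v$ need not satisfy $V \leq \bar v \leq W$ in $B$, define the truncated competitor
\[
    \tilde v := \min(\max(\bar v, V), W) \ \text{ in } B, \qquad \tilde v := u \ \text{ in } \Omega \setminus B.
\]
Because $V \leq u \leq W$ on $\partial B$ and $V \leq W$ everywhere, $\tilde v = u$ on $\partial B$ in the trace sense and $V \leq \tilde v \leq W$ throughout $\Omega$, so $\tilde v \in S(\Omega)$ is an admissible competitor and $\tilde v - u \in H^1_0(B)$.

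By minimality $I(u, B) \leq I(\tilde v, B) \leq \int_B |\nabla \tilde v|^2 + |B|$. The weak gradient of $\tilde v$ in $B$ equals $\nabla \bar v$ on $\{V \leq \bar v \leq W\}$, $\nabla V$ on $\{\bar v < V\}$ and $\nabla W$ on $\{\bar v > W\}$, so
\[
    \int_B |\nabla \tilde v|^2 \leq \int_B |\nabla \bar v|^2 + L^2 |B|, \qquad L := \|\nabla V\|_{L^\infty(\overline D)} + \|\nabla W\|_{L^\infty(\overline D)},
\]
where $L < \infty$ thanks to the classical $C^2$--regularity of $V$ and $W$ inside their respective interphases, their being constant $\pm 1$ outside, and the compact containment $\overline D \Subset \mathcal{B}_{V,W}$. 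Rearranging and invoking the orthogonality relation $\int_B \nabla \bar v \cdot \nabla (u - \bar v) = 0$ (from $\bar v$ harmonic and $u - \bar v \in H^1_0(B)$) yields the central Caccioppoli--type bound
\[
    \int_B |\nabla(u - \bar v)|^2 = \int_B |\nabla u|^2 - \int_B |\nabla \bar v|^2 \leq C r^n.
\]

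With this bound the conclusion follows from a standard Morrey--Campanato iteration. The Dirichlet decay for the harmonic function $\bar v$ (a consequence of the subharmonicity of $|\nabla \bar v|^2$ and the mean value inequality),
\[
    \int_{B_\rho}|\nabla \bar v|^2 \leq C(\rho/r)^n \int_{B_r}|\nabla \bar v|^2 \leq C(\rho/r)^n \int_{B_r}|\nabla u|^2,
\]
combined with the previous estimate gives, for any $\rho \leq r/2$,
\[
    \int_{B_\rho} |\nabla u|^2 \leq C_1 (\rho/r)^n \int_{B_r}|\nabla u|^2 + C_2 r^n.
\]
Giaquinta's iteration lemma upgrades this to $\int_{B_\rho} |\nabla u|^2 \leq C_D \rho^{n - \epsilon}$ for any preassigned $\epsilon > 0$ and all sufficiently small $\rho$; Campanato's characterization of H\"older spaces then delivers $u \in C^{0,\alpha}(D)$ for some $\alpha > 0$ (in fact for any $\alpha < 1$). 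The main obstacle is the admissibility of the truncated replacement: two-sided truncation forces us to pay the $L^2 |B|$ error above, and it is precisely the compact containment $\overline D \Subset \mathcal B_{V,W}$, via the resulting uniform Lipschitz control on the barriers, that keeps this error of the harmless order $O(r^n)$ and allows the Morrey iteration to close.
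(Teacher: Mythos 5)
Your proof is correct, and it reaches the same key estimate $\int_{B}|\nabla(u-v)|^2\leq Cr^n$ that drives the Morrey--Campanato iteration, but it handles the admissibility of the harmonic replacement by a genuinely different device. The paper observes that on $\mathcal{B}_{V,W}=\Omega_{\text{in}}(V)\cup\Omega_{\text{in}}(W)$ the subsolution $V$ is \emph{subharmonic} and the supersolution $W$ is \emph{superharmonic} (the gradient jumps across $F^-(V)$ and $F^+(W)$ have the favorable sign there, and $F^+(V),F^-(W)$ lie outside $\mathcal{B}_{V,W}$). Since the harmonic replacement $v_r$ agrees with $u$ on $\partial B_r$ and $V\leq u\leq W$, the comparison principle gives $V\leq v_r\leq W$ in $B_r$ directly, so $v_r$ is already admissible and one obtains the clean inequality $\int_{B_r}|\nabla(u-v_r)|^2\leq 2|B_r|$ with no dependence on the barriers. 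You instead truncate the replacement back into the constraint set via $\tilde v=\min(\max(\bar v,V),W)$ and pay a quantified penalty $L^2|B|$ controlled by the Lipschitz constants of $V,W$ on $\overline D$; this is sound (the weak-gradient formula for $\min/\max$ compositions you invoke is standard), but it uses an input -- the uniform $C^1$ bound on the barriers -- that the paper's route avoids. The two approaches are otherwise parallel: both exploit the orthogonality $\int_B\nabla\bar v\cdot\nabla(u-\bar v)=0$, the Dirichlet-energy minimality of the harmonic replacement, and the subharmonicity of $|\nabla\bar v|^2$ for the decay estimate; the final iteration (Giaquinta's lemma in yours, Morrey's dyadic argument with a logarithmic correction in the paper) produces in either case a Campanato-type bound $\int_{B_\rho}|\nabla u|^2\lesssim\rho^{n-\varepsilon}$ and hence Hölder continuity. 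The paper's version is a bit more economical because it exposes the structural fact that will be reused in Corollary \ref{Harmcty} -- that the harmonic replacement is automatically in the constraint set -- whereas your truncation trick is more robust in settings where one does \emph{not} have sub/super-harmonicity of the obstacles.
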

\begin{proof}
Let $B_{\rho}\subseteq D$ denote a ball of radius $\rho$, centered
at some fixed point in $D$. Let $v_r$ be the harmonic replacement of
$u$ in the concentric $B_r\subseteq B_{\rho}$. Since $B_r\subset
\mathcal{B}_{V,W}$, where $V$,$W$ are subharmonic and superharmonic,
respectively, the weak maximum principle implies that $V\leq v_r\leq
W$ a.e. in $B_r$. Thus $v_r \in S(\Omega)$ and $I(u)\leq I(v_r)$.
Therefore,
\begin{equation*}
    \int_{B_r} |\nabla(u-v_r)|^2 = \int_{B_r} (|\nabla u|^2 - |\nabla
    v_r|^2) \leq 2|B_r| = c_0 r^n \qquad \forall~ 0<r\leq \rho.
\end{equation*}
for some dimensional constant $c_0$. Whence a standard dyadic
argument in the spirit of \cite[Theorem 5.3.6]{Morrey} yields
\begin{equation*}
    \fint_{B_{r/4}}|\nabla u|^2 \leq
    C(1+\rho^{-1})(1+\log^2(\rho/r)) \qquad \forall ~ 0<r\leq \rho,
\end{equation*}
from which the statement of the proposition follows as in
\cite[Theorem 3.5.2]{Morrey}.


\end{proof}

\begin{coro}\label{Harmcty}
The function $u$ is harmonic in $\Omega_{\text{in}}(u)=\{|u|<1\}$,
subharmonic in $\{u<1\}$ and superharmonic in $\{u>-1\}$.
\end{coro}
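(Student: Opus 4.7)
The corollary has three parts, and my strategy is as follows.

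First, I would upgrade continuity of $u$ from inside $\mathcal{B}_{V,W}$ (Proposition~\ref{contMin}) to all of $\real^9$: outside the band, Corollary~\ref{punches} gives $0 \leq W - V \leq \tfrac{1}{2}$, which forces $V = W = +1$ or $V = W = -1$ and hence $u$ locally constant. In particular the three sets $\{|u|<1\}$, $\{u<1\}$, $\{u>-1\}$ are all open; the same estimate $W - V \leq \tfrac{1}{2}$ also rules out $V(x) = -1$ and $W(x) = 1$ at the same point, so $\{|u|<1\} \subset \mathcal{B}_{V,W}$.

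For harmonicity in $\{|u|<1\}$, I would use the harmonic-replacement argument already foreshadowed in the proof of Proposition~\ref{contMin}. Fix a ball $B \Subset \{|u|<1\}$ and let $\bar u$ be the harmonic function in $B$ agreeing with $u$ on $\partial B$. The sub-/superharmonicity of $V$ and $W$ on $\mathcal{B}_{V,W}\supset B$, combined with the weak maximum principle applied to $V - \bar u$ and $\bar u - W$, yields $V \leq \bar u \leq W$ on $B$; and the maximum principle for the harmonic $\bar u$ itself gives $|\bar u| < 1$ since $|u|<1$ on $\partial B$. Thus $\bar u \in S(\Omega)$ is an admissible competitor and the potential parts of $I(u,B)$ and $I(\bar u, B)$ both equal $|B|$. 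Consequently $I(u,B) \leq I(\bar u, B)$ reduces to $\int_B |\nabla u|^2 \leq \int_B |\nabla \bar u|^2$, and Dirichlet's principle forces $u = \bar u$ in $B$.

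For subharmonicity in $\{u<1\}$ (superharmonicity in $\{u>-1\}$ being symmetric) I would employ a one-sided perturbation of Alt--Caffarelli type. For any $\phi \in C^\infty_c(\{u<1\})$ with $\phi \geq 0$ and any small $\epsilon > 0$, introduce the competitor $v_\epsilon := \max(u - \epsilon\phi,\, V) \in S(\Omega)$, which equals $u$ outside $\mathrm{supp}(\phi)$. Expanding $I(v_\epsilon) - I(u) \geq 0$ and splitting $\mathrm{supp}(\phi)$ into $A_\epsilon := \{u - \epsilon\phi > V\}$, where $v_\epsilon = u - \epsilon\phi$, and its complement $A_\epsilon^c = \{u - V \leq \epsilon\phi\}$, where $v_\epsilon = V$, one obtains to leading order a term $-2\epsilon\int_{A_\epsilon}\nabla u\cdot\nabla\phi$ from $A_\epsilon$, together with an error term coming from $A_\epsilon^c$; dividing by $\epsilon$ and passing to the limit $\epsilon\to 0^+$ produces the desired distributional inequality $\int \nabla u\cdot \nabla \phi \leq 0$.

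The main technical obstacle is the control of the contact-set contribution from $A_\epsilon^c$. The key leverage is the \emph{strict} sub-/supersolution property of $V$ and $W$ (Proposition~\ref{Sec_ViscSup}): since $\Delta V > 0$ strictly in $\Omega_{\text{in}}(V)$ and $u$ is harmonic in $\{|u|<1\}$, the strong maximum principle applied to the superharmonic function $u - V$ yields $u > V$ strictly throughout $\{|u|<1\}$. Hence $\{u = V\} \cap \{u<1\}$ reduces to $\{u = V = -1\}$, on which $\nabla u = \nabla V = 0$ a.e.\ and the competitor $v_\epsilon$ coincides with $u$; this separation makes the $A_\epsilon^c$-contribution negligible relative to the leading $\epsilon$-order term from $A_\epsilon$, and closes the argument.
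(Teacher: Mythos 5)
Your argument for harmonicity in $\{|u|<1\}$ is essentially the paper's: construct the harmonic replacement $\bar u$ on a small ball, use the sub-/superharmonicity of $V$, $W$ in $\mathcal{B}_{V,W}$ plus the estimate $W-V\le\tfrac12$ to check admissibility, and invoke Dirichlet's principle. That part is fine, as is the preliminary observation that $\{|u|<1\}\subset\mathcal{B}_{V,W}$.

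For sub-/superharmonicity in $\{u<1\}$ and $\{u>-1\}$, however, you depart from the paper and take a first-variation route with the competitor $v_\epsilon=\max(u-\epsilon\phi,V)$, and here there is a genuine gap. After splitting $\mathrm{supp}\,\phi$ into $A_\epsilon=\{u-\epsilon\phi>V\}$ and $A_\epsilon^c=\{u-V\le\epsilon\phi\}$, the term from $A_\epsilon$ produces the desired $-2\epsilon\int\nabla u\cdot\nabla\phi$ (up to $O(\epsilon^2)$), but to divide by $\epsilon$ and pass to the limit you must show the $A_\epsilon^c$-contribution $\int_{A_\epsilon^c}\bigl(|\nabla V|^2-|\nabla u|^2 + \chrc{|V|<1}-\chrc{|u|<1}\bigr)$ is $o(\epsilon)$. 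Your justification --- that $A_\epsilon^c$ shrinks to $\{u=V=-1\}$, where $\nabla u=\nabla V=0$ a.e.\ and $v_\epsilon=u$ --- yields only that this contribution is $o(1)$ by dominated convergence: there is no control on how fast $|A_\epsilon^c\setminus\{u=V=-1\}|$ decays with $\epsilon$, since you have no quantitative lower bound on $u-V$ near the coincidence set. Without such a rate (e.g.\ a nondegeneracy estimate for $u-V$, which is not available at this stage), the error term cannot be absorbed after dividing by $\epsilon$, and the conclusion $\int\nabla u\cdot\nabla\phi\le 0$ does not follow.

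The paper's one-line reference to the mean-value characterization sidesteps the contact-set problem entirely: take any ball $B_r(x)\Subset\{u<1\}$ and let $v$ be the harmonic replacement of $u$ in $B_r$. Since $u<1$ on $\overline{B_r}$ and $u\ge-1$ everywhere, the maximum principle gives $-1\le v<1$. On the open set $\{u>v\}\cap B_r$ one has $-1\le v<u<1$, so $u$ is harmonic there and $u-v$ is a positive harmonic function vanishing on the boundary of $\{u>v\}\cap B_r$, a contradiction; hence $u\le v$ and $u(x)\le v(x)=\fint_{\partial B_r}u$. This comparison argument uses only the harmonicity already established and the two-sided bound $|u|\le 1$, and requires no energy expansion or contact-set estimate. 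I suggest replacing your first-variation step with this comparison.
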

\begin{proof}
From the previous proposition we know that $u$ is continuous in
$\mathcal{B}_{V,W}$, therefore $\Omega_{\text{in}}(u)\subseteq
\mathcal{B}_{V,W}$ is an open set. Thus for any $x\in
\Omega_{\text{in}}(u)$ we can find a small enough closed ball
$\overline{B}=\overline{B}_r(x)\subseteq \Omega_{\text{in}}$. Let
$v$ be the harmonic replacement of $u$ in $B$. Since, $|u|<1$ on
$\del B$, the maximum principle implies that $|v|<1$. Combining the
latter with the fact that harmonic extensions minimize the Dirichlet
energy, we get that $I(u, B) \geq I(v,B)$. However, by minimality,
$I(u,B)\leq I(v,B)$. Hence, $I(u, B) = I(v,B)$, which in turn
implies that
\begin{equation*}
    \|\nabla u \|^2_{L^2(B)} = \|\nabla v\|^2_{L^2(B)}
\end{equation*}
So, $u$ is itself the minimizer of the Dirichlet energy, meaning
that $u$ is harmonic in $B$. Since $x\in \Omega_{\text{in}}$ is
arbitrary, we conclude that $u$ is harmonic in $\Omega_{\text{in}}$.

The fact that $u$ is subharmonic in $\{u<1\}$ and superharmonic in
$\{u>-1\}$ now follows from the mean-value characterization of
sub/super-harmonic functions.
\end{proof}

Before we proceed to establish Lipschitz continuity, let us state
the following definition related to the geometry of the pair of
barriers $V, W$:

\begin{definition}\label{Intertwined}
We call the subsolution-supersolution pair $(V,W)$ \textbf{nicely
intertwined in} the bounded domain $\Omega$ if
\begin{equation*}
    F^{+}(W)\cap\Omega \subset \Omega_{\text{in}}(V) \quad \text{and} \quad F^{-}(V)\cap\Omega
\subset \Omega_{\text{in}}(W).
\end{equation*}
so that $F^{+}(W)\cap\Omega$ stays a positive distance away from
$\{V = \pm 1\}\cap\Omega$ and $F^{-}(V)\cap\Omega$ stays a positive
distance away from $\{W = \pm 1\}\cap\Omega$. We say that
$V,W:\real^n\rightarrow \real$ are \textbf{nicely intertwined
globally} if for every $R>0$, there exists an $h_0 = h_0(R)$ large
enough, such that $(V,W)$ is nicely intertwined in all cylinders
$\Omega = C_{R,h}$ for $h\geq h_0(R)$.
\end{definition}
\begin{figure}[htbp]
    \centering
    \input{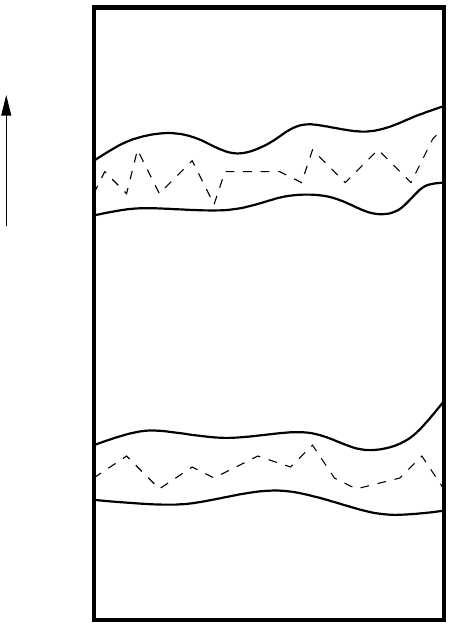_t}
    \caption{The free boundaries of a nicely intertwined pair $(V,W)$ in a cylinder $\Omega$.}
\end{figure}

\begin{prop}\label{propLipMini} Let $D\Subset D' \Subset \Omega$ be compactly contained cylinders and suppose $(V, W)$ is nicely intertwined
in $\Omega$. Then there exists a constant $K$, depending on $n$,
$d(\del D, \del D')$, $d(F^{+}(W)\cap D', F^{-}(V)\cap D')$ and the
Lipschitz constant of $V,W$ in $D'$, such that $|\nabla u|\leq K$ in
$D$. That is, $u$ is Lipschitz-continuous in $D$.
\end{prop}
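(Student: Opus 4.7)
The plan is to reduce the problem locally, near each branch of the free boundary, to a constrained minimization of the one-phase functional \eqref{I0} and then invoke De~Silva's Lipschitz regularity theory for such constrained minimizers.

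By Corollary~\ref{Harmcty}, $u$ is harmonic in $\Omega_{\text{in}}(u)$ with $|u|\le 1$, so interior gradient estimates give $|\nabla u(x)|\le C_n/\mathrm{dist}(x,F^{+}(u)\cup F^{-}(u))$ on $\Omega_{\text{in}}(u)$. It therefore suffices to produce a uniform Lipschitz bound in a uniform-size neighborhood of each of $F^{\pm}(u)\cap D$; the two cases are symmetric and I treat only $F^{+}(u)$. Set $u_0:=1-u$, $\underline V_0:=1-W$, $\overline V_0:=1-V$. Using $W\le 1$, $\Delta W\le 0$ on $\Omega_{\text{in}}(W)$ and $|\nabla W|\ge 1$ on $F^{+}(W)$, one checks that $\underline V_0$ is a strict classical subsolution of \eqref{FBP_0}; analogously $\overline V_0$ is a strict classical supersolution. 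The monotonicity of $V\le u\le W$ in $x_n$ pins $F^{+}(u)\cap D'$ between $F^{+}(W)\cap D'$ and $F^{+}(V)\cap D'$, and the nicely-intertwined hypothesis $d_0:=d(F^{+}(W)\cap D',F^{-}(V)\cap D')>0$ together with the Lipschitz constant $L$ of the barriers in $D'$ produces, by compactness, uniform $r_0,\delta_0>0$ such that the $r_0$-neighborhood $N^{+}$ of $F^{+}(u)\cap D$ is contained in $D'$, avoids $F^{-}(u)\cup\{u=-1\}$, and satisfies $V\ge -1+\delta_0$ (hence $\overline V_0\le 2-\delta_0<2$).

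On $N^{+}$ the two-phase and one-phase energies coincide on every admissible competitor: if $V\le v\le W$, then $v_0:=1-v$ satisfies $\chi_{\{|v|<1\}}=\chi_{\{0<v_0<\overline V_0\}}=\chi_{\{v_0>0\}}$, so $I(v,N^{+})=I_0(v_0,N^{+})$. Extending a one-phase competitor $v_0\in[\underline V_0,\overline V_0]$ on $N^{+}$ by $u_0$ outside therefore yields an admissible competitor for the two-phase minimality of $u$, and we conclude that $u_0$ is itself a local minimizer of $I_0$ on $N^{+}$, constrained between the strict one-phase sub/supersolution pair $\underline V_0\le\overline V_0$. De~Silva's Lipschitz estimate for such constrained minimizers then yields $|\nabla u_0|\le K^{+}$ on $N^{+}\cap D$, with $K^{+}$ controlled by $n$, $L$, $d_0$ and $d(\partial D,\partial D')$. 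The symmetric argument near $F^{-}(u)$ (using $1+u$ and swapping the roles of $V,W$) combined with the interior estimate closes the Lipschitz bound in all of $D$.

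The most delicate point is the uniform choice of the neighborhood $N^{+}$: since the location of $F^{+}(u)$ is a priori unknown, one must make sure that the ``gap'' separating the free boundaries of the barriers is not eroded by wherever $F^{+}(u)$ happens to sit. This is exactly what the distance $d_0$ and the Lipschitz bound $L$ guarantee, independently of the particular constrained minimizer $u$.
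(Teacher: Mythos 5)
Your plan is a genuinely different route from the paper's: you reduce to the one-phase problem near $F^{+}(u)$ (and symmetrically near $F^{-}(u)$) via the substitution $u_0=1-u$ and then try to cite De~Silva's Lipschitz estimate for constrained one-phase minimizers as a black box, whereas the paper proves Lipschitz continuity directly for the two-phase constrained minimizer by rerunning the Alt--Caffarelli harmonic-replacement argument with a case split based on whether the touching free-boundary point is close to $F^{+}(V)$ or not. Your reduction $I(v,N^{+})=I_0(1-v,N^{+})$ and the verification that $u_0$ is a constrained $I_0$-minimizer on $N^{+}$ are both correct, and the observation that the gap $d_0=d(F^{+}(W)\cap D',F^{-}(V)\cap D')$ pins down a uniform-size neighborhood of $F^{+}(u)$ is the right geometric input.

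The weak link is the citation. De~Silva's Lipschitz estimate, as it enters this paper (Theorem~\ref{Regularity}), is stated for minimizers on \emph{vertical cylinders} under hypotheses $H(\Omega)$, which include that $F_{p}(V_0)$ and $F_{p}(W_0)$ stay a positive distance from the top and bottom of the cylinder. Your $N^{+}$ is a tubular neighborhood of the unknown set $F^{+}(u)\cap D$, not a cylinder, so $H(N^{+})$ is not even well-formed, and the theorem cannot be invoked verbatim. One would have to cover $N^{+}$ by uniformly-sized cylinders adapted to $F^{+}(u)$ and verify $H$ on each, which requires a priori control of where $F^{+}(u)$ sits --- precisely the kind of information the paper has not yet established at this stage (the hypotheses $H$ are only checked for the \emph{global} minimizer in the final lemma of Section~\ref{Section_FBexireg}). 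The paper's direct proof avoids this by never asking for a cylinder: it works with balls around points of $\Omega_{\text{in}}(u)\cap D$ and makes the barrier comparison purely pointwise (Case~1 uses the Lipschitz bound on $V$ when the touching point is near $F^{+}(V)$; Case~2 uses the AC inequality \eqref{AClipIneq} together with the gap $d_0$). If you want to pursue your route, you should either quote a localized, ball-based version of De~Silva's Lipschitz estimate or spell out the cylinder covering and the verification of $H$ on each small cylinder; as written, this step is a gap. The ``by compactness'' claim for $r_0,\delta_0$ should also be made precise --- in fact $\delta_0$ is not needed, only $V>-1$ on $N^{+}$, which follows directly from $r_0<d_0$.
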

\begin{proof}
Since $u\in H^1(\Omega)$ and $|u|\leq 1$,
\begin{equation*}
    \nabla u = \nabla u \chrc{|u|<1} \quad \text{a.e.}
\end{equation*}
Thus, it suffices to bound the gradient at points
$x_0\in\Omega_{\text{in}}(u)\cap D$. Let $B_r = B_r(x_0)$ be the
largest ball contained in $\Omega_{\text{in}}\cap D'$, centered at
$x_0$. We may restrict our attention to the situation when $r<
\frac{d(\del D,\del D')}{2}$; for otherwise, using the gradient
estimate for harmonic functions,
\begin{equation*}
    |\nabla u(x_0)| \leq \frac{C}{r} \fint_{B_r} |u| \leq
    \frac{C}{d(\del D,\del D')}.
\end{equation*}
Obviously, in the situation when $r< \frac{d(\del D,\del D')}{2}$,
$B_r$ must touch the free boundary $F(u)$ and not the fixed boundary
$\del D'$.

Assume $B_r(x_0)$ touches $F^{+}(u)$ at a point $x_1$. We consider
two cases determined by how close $x_1$ is to $F^+(V)$.
\begin{itemize}
\item Assume $x_1$ is relatively close to $F^{+}(V)$:
\begin{equation*}
    d(x_1, F^{+}(V)) = |x_2 - x_1| \leq r/2 \quad \text{for some}~x_2 \in
    F^+(V).
\end{equation*}
Note that $|x_2 - x_0| \leq 3r/2 < d(D,D')$, thus the segment
between $x_0$ and $x_2$ is contained in $D'$. The Lipschitz
continuity of $V$ in $D'$ yields
\begin{align*}
    1-u(x_0) & \leq 1 - V(x_0) \leq \|\nabla
    V\|_{L^{\infty}(D')} |x_2 - x_0| \leq  \|\nabla
    V\|_{L^{\infty}(D')} 3r/2.
\end{align*}
Because $1-u\geq 0$ is harmonic in $B_r(x_0)$, Harnack's inequality
implies that
\begin{equation*}
    1-u \leq c (1-u)(x_0) \leq c'\|\nabla V\|_{L^{\infty}(D')} r
    \quad \text{in} \quad B_{r/2}.
\end{equation*}
Hence, by the gradient estimate for harmonic functions we get the
desired
\begin{equation*}
    |\nabla u|(x_0) = |\nabla (1-u)|(x_0) \leq \frac{C}{r}
    \fint_{B_{r/2}(x_0)} (1-u) \leq C \|\nabla V\|_{L^{\infty}(D')}.
\end{equation*}

\item Assume that $d(x_1, F^+(V)) > r/2$. Certainly, $B_{r/2}(x_1)\subseteq
D'$, as $r\leq d(\del D,\del D')/2$. We may also assume that $r\leq
L= d(F^{+}(W)\cap D', F^{-}(V)\cap D')$, for otherwise the gradient
estimate for harmonic functions will immediately give us
\begin{equation*}
    |\nabla u|(x_0) \leq \frac{C}{L}.
\end{equation*}
With these assumptions in mind we see that $B_{r/2}(x_1) \subseteq
\Omega_{\text{in}}(V)\cap D'$, so that $u>-1$ on $B_{r/2}(x_1)$.

Let $v$ be the harmonic replacement of $u$ in $B_{r/2}(x_1)$. By the
strong maximum principle $|v| < 1$, so by minimality,
\begin{equation}\label{LipMini}
    \int_{B_{r/2}(x_1)}|\nabla (u-v)|^2 \leq
    \int_{B_{r/2}}(\chrc{|v|<1} - \chrc{|u|<1}) = |B_{r/2}(x_1)\cap
    \{u=1\}|.
\end{equation}

Now the argument for Lipschitz continuity of \cite{AC} goes through.
It is based on the following bound for the measure of
$B_{r/2}(x_1)\cap
    \{u=1\}$:
\begin{equation}\label{AClipIneq}
|B_{r/2}(x_1)\cap \{u=1\}| \Big(\fint_{\del
B_{r/2}(x_1)}(1-v)\Big)^2 \leq C r^2 \int_{B_{r/2}(x_1)} |\nabla
(u-v)|^2.
\end{equation}
Hence, \eqref{LipMini} and \eqref{AClipIneq} imply
\begin{equation*}
\fint_{\del B_{r/2}(x_1)}(1-v) \leq Cr.
\end{equation*}
Let $x_3$ be a point on the segment between $x_0$ and $x_1$, which
is at a distance $r/4$ from $x_1$. According to Corollary
\ref{Harmcty}, $u$ is superharmonic in $B_{r/2}(x_1)$, so
$1-u(x_3)\leq 1-v(x_3)$. On the other hand, using a Poisson kernel
estimate
\begin{equation*}
    1 - v(x_3) \leq C \fint_{\del B_{r/2}(x_1)} (1-v) \leq C r
\end{equation*}
for some dimensional constant $C$. Then by Harnack inequality,
\begin{equation*}
    \sup_{B_{4r/5}(x_0)}(1-u) \leq
    C'(1-u(x_3))\leq C'r.
\end{equation*}
Applying a gradient estimate, we can conclude $|\nabla u|(x_0) \leq
C$.
\end{itemize}
The case when the ball $B_r(x_0)$ touches $F^{-}(u)$ is treated
analogously.
\end{proof}

\subsection{Construction of a global minimizer.}\label{Sect_Glob}

 Take an increasing sequence of cylinders $\Omega_k = C_{R_k,h_k}$
with $R_k,h_k\nearrow \infty$ and let $u_k$ be the minimizers to
$I(\cdot, \Omega_k)$ over $S(\Omega_k)$ constructed in Proposition
\ref{ExiMin}. If $(V,W)$ is a nicely intertwined pair, Proposition
\ref{propLipMini} implies that (for all large enough $k$) $u_k$ are
uniformly Lipschitz-continuous on compact subsets of $\real^n$.
Therefore, one can extract a subsequence (call it again $\{u_k\}$)
which converges uniformly to a globally defined, locally Lipschitz
continuous function $u:\real^n \to \real$, so that in addition
\begin{equation*}
    \nabla u_k \rightharpoonup \nabla u \quad \text{weakly in} \quad
L^{\infty}_{\text{loc}}(\real^n).
\end{equation*}

\begin{prop}\label{GlobMini} Assume that $V, W$ is a pair of a globally defined subsolution and supersolution to \eqref{FBP}, which are monotonically increasing in $x_n$ and nicely intertwined
with $V\leq W$. Then the locally Lipschitz-continuous function
$u:\real^n\to \real$, produced above, is monotonically increasing in
the $x_n$-variable, satisfies $V\leq u \leq W$ and is harmonic in
$\{u>0\}$. Moreover, for any cylinder $\Omega \subset \real^n$ $u$
minimizes $I(\cdot, \Omega)$ among all competitors $v\in S(\Omega)$
such that $v-u\in H^1_0(\Omega)$.
\end{prop}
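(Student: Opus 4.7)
The four claims split into an easy part and a hard part. The bounds $V\le u\le W$ and monotonicity in $x_n$ pass to the pointwise limit automatically from each $u_k$. Continuity of $u$, via uniform convergence on compacts, makes the interphase $\{|u|<1\}$ open: on any ball $B\Subset\{|u|<1\}$ we have $B\Subset\Omega_k$ and $|u_k|<1$ on $B$ for all large $k$, so Corollary~\ref{Harmcty} gives harmonicity of each such $u_k$, which the uniform limit inherits.

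The minimality assertion is the main content. Fix a cylinder $\Omega$ and a competitor $v\in S(\Omega)$ with $v-u\in H^1_0(\Omega)$. For $k$ large, $\Omega\Subset\Omega_k$, and the plan is to build a test function for $u_k$ via
\[
\tilde v_k := \max\bigl\{V,\,\min\{W,\,u_k+(v-u)\chrc{\Omega}\}\bigr\},
\]
where $(v-u)\chrc{\Omega}$ denotes extension by zero. Off $\Omega$ this reduces to $u_k$, so $\tilde v_k\in S(\Omega_k)$ and $\tilde v_k-u_k\in H^1_0(\Omega_k)$; minimality of $u_k$ and cancellation of the common integral over $\Omega_k\setminus\Omega$ give $I(u_k,\Omega)\le I(\tilde v_k,\Omega)$. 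Lower semicontinuity of $I$ -- the Dirichlet part is weakly l.s.c.\ in $H^1$, and the inclusion $\chrc{|u|<1}\le\liminf_k\chrc{|u_k|<1}$ pointwise (immediate from uniform convergence) makes Fatou applicable to the potential term -- gives $I(u,\Omega)\le\liminf_k I(u_k,\Omega)$. It remains to show the reverse bound $\limsup_k I(\tilde v_k,\Omega)\le I(v,\Omega)$.

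This limsup estimate is the genuine obstacle. For the gradient part, uniform convergence $\tilde v_k\to v$ confines the truncation region to $\{v-V<\|u_k-u\|_\infty\}\cup\{W-v<\|u_k-u\|_\infty\}$, whose measure shrinks to $\{v=V\}\cup\{v=W\}$; there $\nabla v$ agrees a.e.\ with $\nabla V$ or $\nabla W$ by Stampacchia's theorem, so no gradient energy is lost under truncation. Combined with strong $H^1_{\mathrm{loc}}$-convergence $u_k\to u$ -- extracted by a parallel energy-comparison argument (use $u$ itself as a local competitor for $u_k$ through a narrow transition layer near $\partial\Omega$ to force $\|\nabla u_k\|_{L^2(\Omega)}\to\|\nabla u\|_{L^2(\Omega)}$, then upgrade weak to strong convergence) -- this yields $\int_\Omega|\nabla\tilde v_k|^2\to\int_\Omega|\nabla v|^2$. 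The delicate issue is the characteristic-function term, which is only upper semicontinuous: on $\{v=1\}$ one necessarily has $W=1$, and the truncation can force $\tilde v_k<1$ wherever $u_k<u$, inflating $\int\chrc{|\tilde v_k|<1}$ relative to $\int\chrc{|v|<1}$. To absorb this, I would refine $\tilde v_k$ so as to honor the barrier phase boundaries: set $\tilde v_k=W$ on $\{v=W\}$, $\tilde v_k=V$ on $\{v=V\}$, and keep $u_k+(v-u)$ only on the open set $\{V<v<W\}$. The nicely-intertwined geometry of $(V,W)$ keeps these boundary sets well-separated from the interphases of the other barrier, so the modified $\tilde v_k$ still lies in $S(\Omega_k)$ with $\tilde v_k-u_k\in H^1_0(\Omega_k)$, and the spurious interphase measure disappears. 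This yields the limsup bound and completes the proof.
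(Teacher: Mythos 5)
Your treatment of the easy assertions ($V\le u\le W$, monotonicity, harmonicity in the interphase) is fine, and your lower-semicontinuity bound $I(u,\Omega)\le\liminf_k I(u_k,\Omega)$ is correct. But the main minimality step has a genuine gap. You try to transplant the competitor $v$ onto $\Omega_k$ by setting $\tilde v_k := \max\{V,\min\{W,\,u_k+(v-u)\chrc{\Omega}\}\}$ and then showing $\limsup_k I(\tilde v_k,\Omega)\le I(v,\Omega)$. You correctly identify that the potential term $\int\chrc{|\tilde v_k|<1}$ is the obstruction: on the set $\{v=W=1\}$, wherever $u_k<u$ the function $u_k+(v-u)$ drops strictly below $1$, so $\chrc{|\tilde v_k|<1}=1$ there even though $\chrc{|v|<1}=0$, and uniform convergence $u_k\to u$ gives no control on the measure of $\{u_k<u\}$. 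Your proposed repair --- declare $\tilde v_k=W$ on $\{v=W\}$ and $\tilde v_k=V$ on $\{v=V\}$ while keeping $u_k+(v-u)$ on $\{V<v<W\}$ --- does not yield an $H^1$ function: approaching $\partial\{v=W\}$ from inside $\{V<v<W\}$, the value $u_k+(v-u)$ tends to $W-(u-u_k)$, which differs from $W$ by $u-u_k\ne 0$, so you have created a jump discontinuity across that interface. This is not a cosmetic issue; it is exactly the difficulty your construction cannot avoid.

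The paper's proof takes a different and more robust route that sidesteps the limsup entirely: it argues by contradiction, assuming $I(v,\Omega)\le I(u,\Omega)-\delta$, and builds a competitor $v_k$ for $u_k$ that equals $v$ \emph{exactly} on $\Omega$, equals $u_k$ outside a $t$-neighborhood $\mathcal{N}_t(\Omega)$, and interpolates linearly between $u$ and $u_k$ on the thin annulus $A_t=\mathcal{N}_t(\Omega)\setminus\Omega$. Because the interpolation is a convex combination of $u$ and $u_k$, the constraint $V\le v_k\le W$ holds automatically, and there is no truncation and hence no spurious interphase. The energy on $A_t$ is controlled by the uniform Lipschitz bound on $u_k$ (your Proposition on Lipschitz continuity), giving $I(v_k,A_t),I(u_k,A_t)=O(t)$ once $\|u_k-u\|_{L^\infty}\le t$, and $I(u,\Omega)-I(u_k,\Omega)$ is handled by lower semicontinuity along a subsequence. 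Choosing $t$ small then yields the contradiction. If you want to keep your truncation idea, you would need to insert a similar thin transition layer inside $\Omega$ near $\partial\{v=\pm 1\}$ to smoothly join $v$ to $u_k+(v-u)$, at which point you have essentially rederived the paper's construction --- so it is simpler to interpolate once, at $\partial\Omega$, as the paper does. (Your sketch of strong $H^1_{\mathrm{loc}}$ convergence of $u_k$ via a "narrow transition layer" is in fact the same device; had you applied it to build the competitor rather than to upgrade the convergence mode, you would have recovered the correct proof.)
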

\begin{proof}
The first three properties follow from the uniform convergence $u_k
\to u$ on compact sets. Let us concentrate on the minimization
property: assume that there exists a cylinder $\Omega$ and a
competitor $v\in S(\Omega)$, $v-u\in H_0^1(\Omega)$ such that
\begin{equation*}
    I(v,\Omega) \leq I(u,\Omega) - \delta
\end{equation*}
for some $\delta >0$. Denote by $\mathcal{N}_t(\Omega)$ the
$t$-thickening of $\Omega$:
\begin{equation*}
    \mathcal{N}_t(\Omega) = \{x:\in \real^n: \text{dist}(x,\bar{\Omega}) <
    t\}.
\end{equation*}
For $k$ large enough so that $\Omega\Subset \Omega_k$ construct the
following competitor $v_k: \Omega_k \to \real$ for $u_k$:
\begin{equation*}
    v_k(x) = \left\{
    \begin{array}{cl}
    v(x) & \text{in} \quad \Omega \\
   \left(1- \frac{d(x, \bar{\Omega})}{t} \right) u(x) + \frac{d(x,\bar{\Omega})}{t} u_k(x) &
    \text{in} \quad A_{t} := \mathcal{N}_{t}(\Omega) \setminus \Omega \\
    u_k(x) & \text{in} \quad \Omega_k \setminus \mathcal{N}_{t}(\Omega) \\
    \end{array}\right.
\end{equation*}
for some small enough $t>0$ which will be chosen later. It is easy
to check that $v_k\in S(\Omega_k)$, therefore
\begin{align}\label{wantacontra}
    0 & \leq I(v_k, \Omega_k)-I(u_k, \Omega_k) \leq \notag \\
    & \leq \big(I(v,\Omega) - I(u,\Omega)\big) + \big(I(u,\Omega) - I(u_k, \Omega)\big)
    + \big(I(v_k, A_t) - I(u_k, A_t)\big) \notag \\
& \leq - \delta + \big(I(u,\Omega) - I(u_k, \Omega)\big)
    + \big(I(v_k, A_t) - I(u_k, A_t)\big).
\end{align}
By the lower semicontinuity of $I(\cdot,\Omega)$ there exists a
subsequence $u_{k_l}$ such that
\begin{equation*}
    I(u, \Omega) - I(u_{k_l},\Omega) < \delta/2.
\end{equation*}
Now we claim that we can choose $t$ so small that for all $l$ large
enough
\begin{equation}\label{Annulusesti}
I(v_{k_l}, A_t) - I(u_{k_l}, A_t) < \delta/2
\end{equation}
which will lead to a contradiction in \eqref{wantacontra}. Indeed,
$|\nabla u_k |\leq K$ is uniformly bounded on some fixed large
cylinder $\Omega' \supseteq \mathcal{N}_t(\Omega)$, so
\begin{equation*}
    I(u_{k_l},A_t) \leq (K+1)|A_t| \leq C t.
\end{equation*}
Also, $|\nabla u| \leq K$ on $\Omega'$
\begin{align*}
    |\nabla v_{k_l}| & = \left|\nabla\Big( u + \frac{d(x,\Omega)}{t} (u_{k_l}-u) \Big)
\right| \leq |\nabla u| + \frac{1}{t} |u_{k_l}-u| + |\nabla u_{k_l}
- \nabla u| \\
& \leq 3 K + \frac{1}{t} |u_{k_l}-u|.
\end{align*}
Thus, if $\eps_k = \sup_{\Omega'}|u_k - u| $
\begin{equation*}
I(v_{k_l},A_t) \leq C'|A_{t}|(1 + \eps_{k_l}/t + (\eps_{k_l}/t)^2)
\leq C t.
\end{equation*}
for all $l$ large enough, so that $\eps_{k_l}\leq t$. Thus, if we
choose $t< \delta/(2C)$, the estimate \eqref{Annulusesti} will be
satisfied for all $l$ large enough.
\end{proof}


\subsection{Regularity of global minimizers.}

As mentioned in the introduction, there is an intimate connection
between the energy functional $I$ and the standard one-phase energy
functional
\begin{equation*}
    I_0(u, \Omega) = \int_{\Omega} |\nabla u|^2 + \chrc{\{u>0\}}
    \qquad u\in H^1(\Omega).
\end{equation*}
as well as between the notions of viscosity (sub-/super-) solutions
to \eqref{FBP} and \eqref{FBP_0}. Recall,

\begin{definition}\label{defVisc0}
A viscosity solution to \eqref{FBP_0} is a non-negative continuous
function $u$ in $\Omega$ such that
\begin{itemize}
\item $\Delta u = 0$ in $\Omega_p(u)$;
\item If there is a tangent ball $B$ to $F_p(u)$ at some $x_0\in F_p(u)$
from either the positive or zero side, then
\begin{equation*}
    u(x) = \langle x-x_0, \nu \rangle^+ + o(|x-x_0|) \quad
\text{as}\quad x\to x_0,
\end{equation*}
where $\nu$ is the unit normal to $\del B$ at $x_0$ directed into
$\Omega_p(u)$.
\end{itemize}
Equivalently, a viscosity solution cannot be touched from above by a
strict classical supersolution or from below by strict classical
subsolution at a free boundary point.
\end{definition}

\begin{definition}
A viscosity subsolution (resp. supersolution) to \eqref{FBP_0} is a
non-negative continuous function $v$ in $\Omega$ such that
\begin{itemize}
\item $\Delta u \geq 0$ in $\Omega_p(u)$;
\item If there is a tangent ball from the positive side $B\subset \Omega_p(v)$ (resp.
zero side $\Omega_n(v)$) to $F_0(v)$ at some $x_0\in F_0(v)$ and
$\nu$ denotes the unit inner (resp. outer) normal to $\del B$ at
$x_0$, then
\begin{equation*}
    v(x) = \alpha \langle x-x_0, \nu\rangle^+ + o(|x-x_0|) \quad
\text{as}\quad x\to x_0,
\end{equation*}
for some $\alpha \geq 1$ (resp. $\alpha \leq 1$).
\end{itemize}
If $\alpha$ is strictly greater (resp. smaller) than $1$, then $v$
is called a \textbf{strict} viscosity subsolution (resp.
supersolution).
\end{definition}

\begin{remark}\label{NotaBeneMini}
Indeed, suppose $u$ minimizes $I(\Omega)$ among all $v \in
H^1(\Omega)$, such that $V\leq v \leq W$, where $V$ is a subsolution
and $W$ is a supersolution to \eqref{FBP} and let $D \subseteq
\Omega$ be a (regular enough) domain such that either
\[
D\cap \{V=-1\} = \emptyset \quad \text{or} \quad D \cap \{W=1\} =
\emptyset.
\]
In the first case, we readily see that $1-u$ minimizes $I_0(v_0, D)$
among all admissible $1-W \leq v_0 \leq 1-V$, with $1 - W$ being a
subsolution and $1 - V$ -- a supersolution to \eqref{FBP_0} in $D$.
Similarly, in the second case, $u_0 + 1 $ minimizes $I_0(v_0, D)$
among all admissible $V+1 \leq v_0 \leq W+1$, with $V+1$ being a
subsolution and $W+1$ -- a supersolution to \eqref{FBP_0} in $D$.
\end{remark}

Below we will collect the regularity results concerning constrained
minimizers of $I_0$, developed by \cite{DS}. For completeness, we
will lay out the natural sequence of establishing regularity:
starting from weaker notions and ending with the optimum, classical
regularity.

When a viscosity solution $u$ to \eqref{FBP_0} arises from a
minimization problem, $u$ exhibits a non-degenerate behaviour at the
free boundary in the sense that $u$ grows linearly away from its
free boundary in the positive phase. To make that statement precise
we need the following definition.
\begin{definition}
A continuous nonnegative function $u$ is non-degenerate along its
free boundary $F_p(u)$ in $\Omega$ if for every $G\Subset \Omega$,
there exists a constant $K = K(G) >0$ such that for every $x_0\in
F_p(u)\cap G$ and every ball $B_r(x_0)\subseteq G$,
\begin{equation*}
    \sup_{B_r(x_0)} u \geq K r.
\end{equation*}
\end{definition}
On the way to establishing strong regularity properties for free
boundary $F_p(u)$ one needs certain weaker, measure-theoretic
notions of regularity.
\begin{definition}
The free boundary $F_p(u)$ satisfies the density property (D) if for
every $G\Subset \Omega$ there exists a constant $c = c(G)>0$ such
that for every ball $B_r\subseteq G$, centered at a free boundary
point,
\begin{equation*}
    c\leq \frac{|B_r \cap \Omega_p(u)|}{|B_r|} \leq 1-c.
\end{equation*}
\end{definition}
Here we should also recall the notion of nontangentially accessible
(NTA) domains \cite{JK}, which admit the application of the powerful
boundary Harnack principles.
\begin{definition}
A bounded domain $D\subset \real^n$ is NTA if for some constants
$M>0$ and $r_0>0$ it satisfies the following three conditions:
\begin{itemize}
\item (Corkscrew condition). For any $x\in \del D$, $r<r_0$, there
exists $y=y_r(x) \in D$ such that $M^{-1}r < |y-x| < r$ and
$\text{dist}(y, \del D) > M^{-1}r$;
\item (Density condition). The Lebesgue density of $D^c = \real^n \setminus D$ at every point $x\in D^c$ is uniformly bounded from
below by some positive $c>0$
\begin{equation*}
    \frac{|B_r(x)\cap D^c|}{|B_r(x)|} \geq c \quad \forall~ x\in D^c
\quad 0<r<r_0;
\end{equation*}
\item (Harnack chain condition) If $x_1, x_2 \in D$,
$\text{dist}(x_i,\del D)>\eps$, $i=1,2$ and $|x_1 - x_2| < m \eps$
there exists a sequence of $N = N(m)$ balls $\{B_{r_j}\}_{j=1}^N$ in
$D$, such that $x_1 \in B_{r_1}$, $x_2 \in B_{r_N}$, successive
balls intersect and $M^{-1} r_j < \text{dist}(B_{r_j},\del D) < M
r_j$, $j = 1, \ldots, N$.
\end{itemize}
\end{definition}

We can now state the regularity results proved in \cite{DS} and
\cite{DSJ} concerning constrained minimizers of $I_0$. We will say
that the triple of functions $(V_0, u_0, W_0)$ defined on a vertical
right cylinder $\Omega = C_{R,h}$ satisfies the hypotheses
H$(\Omega)$ if
\begin{itemize}
\item $V_0$ is a strict classical subsolution and $W_0$ is a strict
classical supersolution to \eqref{FBP_0} in $\Omega$ such that
$V_0\leq W_0$, $\del_{x_n} V_0
> 0$ in $\{V_0>0\}$ and $\del_{x_n} W_0 >0$ in $\{W_0 > 0\}$.
Moreover, $F_0(V_0), F_0(W_0)$ are a positive distance away from the
top and bottom sections of $\Omega$.

\item The function $u_0$ is monotonically increasing in $x_n$ and minimizes $I_0(\cdot,
\Omega)$ among all competitors $v\in H^1(\Omega)$ such that $v-u \in
H^1_0(\Omega)$ and $V_0 \leq v \leq W_0$ in $\Omega$.
\end{itemize}



\begin{theorem}[\cite{DS}, \cite{DSJ}]\label{Regularity}
If $(V_0, u_0, W_0)$ satisfies the hypotheses H$(\Omega)$, then:
\begin{itemize}
\item \cite{DS}, $u$ is Lipschitz-continuous and non-degenerate along its free boundary $F_p(u)$;
\item \cite{DS}, $F_p(u)$ satisfies the density property (D);
\item \cite{DS}, $F_p(u)$ touches neither $F_p(V_0)$ nor $F_p(W_0)$;
\item \cite{DS}, $u$ is a viscosity solution to \eqref{FBP_0};
\item \cite{DS}, For any vertical cylinder $D\Subset \Omega$ the positive phase $D\cap \Omega_p(u)$ is an NTA domain;
\item \cite{DS}, The free boundary $F_p(u)\cap C_{\frac{3R}{4}, h}$ is given by the graph of a
continuous function $\phi$, $F_p(u) = \{(x',x_n): |x'|<3R/4, x_n =
\phi(x')\}$.
\item \cite{DSJ}, If $\max_{|x'|<3R/4} |\phi(x)| \leq h-\eps$, and $\eps \ll R < h$ then
\begin{equation*}
    \sup_{|x'|<\eps/2}|\nabla \phi|\leq C,
\end{equation*}
where $C$ depends on the dimension $n$, the Lipschitz constant of
$u$, on $h, \eps$ and the NTA constants of $\Omega_p(u)\cap C_{3R/4,
h}$. By the work of Caffarelli \cite{Caf}, this implies $\phi(x')$
is smooth in $\{|x'|\leq \eps/4\}$.
\end{itemize}
\end{theorem}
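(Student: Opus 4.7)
The plan is to verify that the hypotheses of the regularity theory developed in \cite{DS} and \cite{DSJ} apply, and to trace the logical order in which these results are derived. Since the theorem aggregates results from those papers, each bullet is essentially an invocation, but the conceptual flow is important: weaker measure-theoretic regularity comes first, then NTA, then boundary Harnack, then the graph property, then Lipschitz, and finally $C^{1,\alpha}$ via Caffarelli \cite{Caf}.

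First I would establish Lipschitz continuity and non-degeneracy of $u_0$ along $F_p(u_0)$ by the harmonic-replacement technique of \cite{AC}: replacing $u_0$ on a ball centered at a free boundary point $x_0$ by its harmonic extension and comparing the energy gain against the measure $|B_r(x_0)\cap\{u_0=0\}|$ yields both one-sided linear bounds. The Lipschitz estimate then adapts almost verbatim from Proposition \ref{propLipMini} to the one-phase setting, while non-degeneracy is a direct maximum-principle comparison with the linear profile $\langle x-x_0,\nu\rangle^+$. Together these yield the density property (D). The fact that $u_0$ is a viscosity solution follows from the usual test-function argument: a classical strict sub- or supersolution touching $u_0$ at a free boundary point would violate minimality via an explicit energy perturbation. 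The strictness of the barriers $V_0, W_0$ then excludes the possibility that $F_p(u_0)$ touches $F_p(V_0)$ or $F_p(W_0)$: otherwise, comparing the expansions at the touching point would yield a linear profile of slope both $\leq 1$ and strictly $>1$ (or vice versa), a contradiction.

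Next I would establish the NTA structure of $\Omega_p(u_0)\cap D$ for vertical subcylinders $D\Subset \Omega$. The interior corkscrew condition follows from non-degeneracy (points deep inside the positive phase exist at every scale), the Harnack chain condition follows from Lipschitz continuity and $\del_{x_n}u_0>0$ (one can always crawl upward into the bulk of $\Omega_p(u_0)$), and the exterior Lebesgue density condition is precisely (D). With the NTA property in hand, the key step is to apply the boundary Harnack principle of \cite{JK} to the pair $(u_0,\del_{x_n} u_0)$, both non-negative harmonic in $\Omega_p(u_0)$ and vanishing on $F_p(u_0)$; the resulting H\"older continuity of the ratio up to the free boundary rules out vertical segments in $F_p(u_0)$, since monotonicity in $x_n$ would force the ratio to blow up along such a segment. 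Thus $F_p(u_0)\cap C_{3R/4, h}$ is the graph of some function $\phi$, and continuity of $\phi$ is then a consequence of (D).

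Finally, to upgrade $\phi$ to a Lipschitz graph I would follow De Silva and Jerison \cite{DSJ}: their comparison scheme studies tilted translates of $u_0$ and exploits the boundary Harnack principle on NTA domains to extract a uniform modulus on the directional differences of $\phi$, with a constant depending only on the dimension, the Lipschitz constant of $u_0$, the NTA constants of $\Omega_p(u_0)\cap C_{3R/4,h}$, and the aspect ratio $\eps/R$ controlling how close $\phi$ comes to the top and bottom sections. Once $\phi$ is Lipschitz, Caffarelli's result \cite{Caf} upgrades $F_p(u_0)$ to $C^{1,\alpha}$, after which standard elliptic bootstrapping gives smoothness. I expect the main obstacle to be step three: the boundary Harnack principle on NTA domains with only the minimal a priori regularity of $\Omega_p(u_0)$ carries most of the analytic weight, and it is precisely this ingredient that makes De Silva's approach nontrivial and justifies quoting \cite{DS} rather than reproving it.
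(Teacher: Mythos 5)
The paper does not actually prove Theorem \ref{Regularity}; it is a citation theorem collecting results from \cite{DS} and \cite{DSJ}, as the bracketed attributions on each bullet make explicit, and the only ``proof'' the paper offers is the prose summary in the introduction. Your reconstruction matches that summary in both substance and ordering: harmonic replacement \`a la \cite{AC} gives Lipschitz continuity and non-degeneracy, which together yield the density property; minimality plus a perturbation argument gives the viscosity-solution property; strictness of the barriers excludes touching; non-degeneracy, the density estimate, and monotonicity feed the three NTA axioms; and the boundary Harnack principle on the NTA positive phase rules out vertical segments, giving a continuous graph that De Silva--Jerison then upgrade to Lipschitz and Caffarelli to $C^{1,\alpha}$. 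The one place where your route differs slightly from \cite{DS} is the exclusion of vertical segments: De Silva compares $u_0$ with its vertical translate $u_0(\cdot+t e_n)$ and applies boundary Harnack to the nonnegative pair $(u_0,\ u_0(\cdot+te_n)-u_0)$, whereas you propose the infinitesimal version with $(u_0,\ \partial_{x_n}u_0)$. These implement the same idea, but the finite-translate formulation is the one actually used because it sidesteps any need to control $\partial_{x_n}u_0$ near the free boundary a priori; your variant would require a preliminary argument that $\partial_{x_n}u_0$ is a well-behaved nonnegative harmonic function up to $F_p(u_0)$. Otherwise the proposal tracks the cited works accurately.
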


Let us revert our attention to the original problem. According to
Proposition \ref{Sec_ViscSup}, we are in possession of a pair of a
strict classical supersolution $W:\real^9 \to \real$ and a strict
classical subsolution $V:\real^9 \to \real$, such that $V\leq W$,
both are monotonically increasing in the $x_9$-variable (strictly
increasing in that direction when away from their $\pm 1$ phases),
and are, in addition, nicely intertwined (see Definition
\ref{Intertwined}). By Proposition \ref{GlobMini}, we can then
construct a globally defined, monotonically increasing in $x_9$
function $u:\real^9 \to \real$, such that $u$ minimizes $I(\cdot,
\Omega)$ among $v\in S(\Omega)$ for any vertical cylinder $\Omega$.
Taking into account the observations we made in Remark
\ref{NotaBeneMini}, we can utilize the regularity results (Theorem
\ref{Regularity}) once we simply show that around every free
boundary point $x_+\in F^+(u)$ there exists a vertical cylinder
$\Omega_+\ni x_+$, such that $\Omega_+ \cap \{V = -1\} = \emptyset$
and around every free boundary point $x_-\in F^-(u)$ there exists a
cylinder $\Omega_-\ni x_-$, such that $\Omega_- \cap \{W = 1\} =
\emptyset$. Furthermore, we'll need to show $F^+(u) \cap \Omega_+$
and $F^{-}(u) \cap \Omega_-$ stay a positive distance away from the
top and bottom of $\Omega_+$, respectively $\Omega_-$. This is the
content of the next lemma.

\begin{lemma} Let $V, W$ be the strict subsolution/supersolution provided by Corollary \ref{punches} and $u$ -- the function
constructed in \S \ref{Sect_Glob}. For every $y'\in \real^8$ there
exist an $y_n\in\real$ and an $R>0$ small enough such that
\begin{equation*}
    -\frac{1}{2}\leq V(x',y_n)\leq W(x',y_n) \leq \frac{1}{2}
\end{equation*}
for all $x'\in \real^8$ with $|x'-y'|\leq R$. Thus, if
\begin{align*}
    \Omega_{+}(y') & = \{(x',x_n): |x'-y'|<R,~ 0< x_n-y_n <
h\}\\
\Omega_-(y') & =\{(x',x_n): |x'-y'|<R,~ 0 < y_n-x_n < h\}
\end{align*}
and $h>0$ is large enough, the monotonicity of $V$ and $W$ in the
$x_n$-direction guarantees that
\begin{itemize}
\item $\overline{\Omega_+(y')} \cap \{V=-1\} = \emptyset$ and $F^+(V),
F^+(W)$ exit from the side of $\Omega_+$;
\item $\overline{\Omega_-(y')} \cap \{W=1\} = \emptyset$ and $F^-(V),
F^-(W)$ exit from the side of $\Omega_-$.
\end{itemize}
In particular, $(1-W, 1-u, 1-V)(x', -x_n)$ satisfy hypotheses
H$(\Omega_+(y'))$, while $(V+1, u+1, W+1)(x',x_n)$ satisfy
hypotheses H$(\Omega_-(y'))$.

\end{lemma}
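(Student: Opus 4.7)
My plan is to use the scaling/Fermi coordinates to pick $y_n = F_\alpha(y')$, so that the base point $(y', y_n)$ sits on $\Gamma_\alpha$, i.e., at Fermi height $z = 0$. Plugging into the ansatz formulas \eqref{ansatz} and \eqref{subansatz}, the values collapse to
\begin{equation*}
 w(y', y_n) = h_0^\alpha(y'), \qquad v(y', y_n) = -h_0^\alpha(y'),
\end{equation*}
both of size $O(\alpha^p)$ and therefore safely inside $[-1/4, 1/4]$ for all $\alpha$ small enough. Continuity of $v, w$ on the Fermi band then yields, for $R \leq 1$ chosen small enough, the inequality $-1/2 \leq v(x', y_n) \leq w(x', y_n) \leq 1/2$ on the base disk $\{|x' - y'| \leq R\}$. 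Since this disk lies well inside $\mathcal{B}_{\Gamma_\alpha}(2) \cap \{|v|, |w| \leq 1\}$, the piecewise definitions in Corollary \ref{punches} give $V = v$ and $W = w$ there, establishing the first claim.

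For the cylinder $\Omega_+(y')$, monotonicity of $V$ in $x_9$ together with $V(x', y_n) \geq -1/2$ on the base immediately forces $V(x) \geq -1/2 > -1$ throughout $\overline{\Omega_+(y')}$, so this closed cylinder is disjoint from $\{V = -1\}$. To see that $F^+(V), F^+(W)$ exit through the sides, I would observe that both are confined to the Euclidean tubular neighbourhood $\mathcal{B}_{\Gamma_\alpha}(2)$, which has bounded width. For $h$ chosen large enough (depending on the inclination of $\nu_\alpha$ at $y'$, which affects the maximum $x_9$-extent of the band over the base disk), the top face $\{x_9 = y_n + h\}$ sits strictly above this tubular neighbourhood, where $V \equiv W \equiv 1$; the base face, in turn, has $V, W \leq 1/2 < 1$. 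Thus $F^+(V)$ and $F^+(W)$ stay a positive distance from both top and bottom and may reach the boundary of $\Omega_+(y')$ only through the lateral side. The mirror argument, using that $W(x', y_n) \leq 1/2$ and $W$'s monotonicity, handles $\Omega_-(y')$.

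The concluding H$(\Omega_\pm(y'))$ claim is then essentially an application of Remark \ref{NotaBeneMini} combined with the reflection $x_9 \mapsto -x_9$ in the $\Omega_+$ case. Since $\overline{\Omega_+(y')} \cap \{V = -1\} = \emptyset$, the remark gives $1-W$ and $1-V$ as a strict sub/super pair for \eqref{FBP_0} on $\Omega_+(y')$, with $1-u$ the corresponding constrained minimizer of $I_0$. Under the reflection the PDE and the gradient condition are preserved, while the $x_9$-decrease of $1-V, 1-W, 1-u$ turns into the required monotone increase in the new last coordinate — strict in their positive phases because Corollary \ref{punches} provides strict $x_9$-monotonicity of $V, W, u$ inside their interphases. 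The free-boundary exit property verified in the previous paragraph supplies the last condition of H. For $\Omega_-(y')$ the same outline applies via the second case of Remark \ref{NotaBeneMini} (invoking $\overline{\Omega_-(y')} \cap \{W = 1\} = \emptyset$), and no reflection is needed since $u+1$ already has its positive phase above $F^-(u)$. No serious analytic obstacle is present; the one point requiring a little care is the quantitative choice of $h$ against the tilt of $\nu_\alpha$ at $y'$ so that the top of $\Omega_+(y')$ truly clears the band.
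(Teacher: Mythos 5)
Your proof is correct and in the same spirit as the paper's, but you use a slightly different route to the base inequality. The paper fixes $y_n$ by the intermediate value theorem (choosing $y_n$ so that $W(y',y_n) = 1/4$) and then deduces the lower bound on $V$ from the global pinching $0 \leq W - V \leq 1/2$ provided by Corollary~\ref{punches}; you instead choose $y_n = F_\alpha(y')$ so that the base point sits on $\Gamma_\alpha$ (i.e., at Fermi height $z=0$), read off $w = h_0^\alpha$, $v = -h_0^\alpha = O(\alpha^p)$ directly from the ansatz, and then invoke continuity. Both are short and valid; the paper's argument is marginally cleaner in that it avoids re-entering the Fermi coordinate system, whereas yours is more explicit about why the values sit near zero. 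You also spell out the "exit from the side" claim and the reflection step in the verification of H$(\Omega_\pm)$ more carefully than the paper (which relegates them to the one-line "monotonicity of $V$ and $W$... guarantees" in the lemma statement and proves only the first display). One small point worth being precise about, which you flag but don't fully nail down: when you say $h$ "depends on the inclination of $\nu_\alpha$ at $y'$," it is cleaner to observe that the interphases $\{|V|<1\}$ and $\{|W|<1\}$ are contained in the unit-width band $\mathcal{B}_{\Gamma_\alpha}$, whose $x_9$-extent over the compact base disk $\{|x'-y'|\leq R\}$ is finite, so any $h$ exceeding that extent works.
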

\begin{proof}
Fix $y'\in \real^8$. Since $W(y',x_n) = \pm 1$ for all large
positive (negative) $x_n$, there certainly exists a $y_n$ such that
$W(y',y_n)=\frac{1}{4}$. For a small enough $R>0$, we can ensure
$0\leq W(x',y_n) \leq \frac{1}{2}$ whenever $|x'-y'|<R$. Since, $W$
and $V$ were constructed so that
\[
    0\leq W(x)-V(x) \leq \frac{1}{2} \qquad \forall~ x\in \real^9 \quad (\text{Corollary \ref{punches}})
\]
we see that
\begin{equation*}
    -\frac{1}{2} \leq V(x',y_n) \leq W(x',x_n) \leq \frac{1}{2}
\qquad |x'-y'|<R.
\end{equation*}
\end{proof}

Localizing at any free boundary point, we immediately invoke the
lemma above and the regularity results (Theorem \ref{Regularity}),
establishing the desired Theorem \ref{theBread}.


\appendix
\section{Supersolutions for $J_{\Gamma_{\infty}}$}\label{append1}

The two objectives of this appendix are
\begin{itemize}
\item To state the results of Del Pino, Kowalczyk and Wei \cite{PKW}
concerning supersolutions for the linearized mean curvature operator
on $F_{\infty}$:
\begin{equation*}
    H'[F_{\infty}](\phi) = \left.\frac{d}{dt}\right|_{t=0} H[F_{\infty}+
    t\phi] = \text{div}\left(\frac{\nabla \phi}{\sqrt{1 + |\nabla F_{\infty}|^2}} - \frac{(\nabla F_{\infty}\cdot \nabla \phi)\nabla F_{\infty}}{(1 + |\nabla F_{\infty}|^2)^{3/2}} \right)
\end{equation*}
where $H[\cdot]$ is the mean curvature operator (MCO), and to
describe the relation between $H'[F_{\infty}]$ and the Jacobi
operator $J_{\Gamma_{\infty}}$ on $\Gamma_{\infty}$.

\item To obtain the refined estimate $H_{\infty,3} = O\left(\frac{|\theta-\pi/4|}{1+r^3}\right)$.
\end{itemize}
We recall that in polar coordinates $(r,\theta)$,
$F_{\infty}(r,\theta) = r^3g(\theta)$ and the function $g(\theta)$
is smooth and satisfies:
\begin{enumerate}
\item $g(\theta) = -g(\frac{\pi}{2} - \theta)$
\item $g(\theta) = (\theta - \frac{\pi}{4})(1 + c_3(\theta -
\frac{\pi}{4} )^2 + \cdots) \quad \text{near} \quad \theta =
\frac{\pi}{4}$
\item $g'(\theta) > 0 \quad \text{for}~ \theta\in(0,\frac{\pi}{2})
\quad \text{and}\quad g'(0)=g'(\pi/2)=0.$
\end{enumerate}

\subsection{The relation between the Jacobi operator and the linearized MCO}

For a domain $U\subset\real^8$, let $S:U \rightarrow \real$ be an
arbitrary $C^2(U)$ function and denote by $\Sigma = \{(x',x_9)\in
U\times\real: x_9 = S(x')\}$ its graph. Denote the standard
projection onto $\real^9$ by
\[
 \pi:\real^8\times \real \rightarrow \real^8.
\]
We can identify functions $\phi$ defined on $U$ with functions
$\phi_{\Sigma}$ on $\Sigma$ in the usual way:
\begin{equation*}
    \phi_{\Sigma} = \phi\circ\pi.
\end{equation*}
We'll abuse notation and use the same symbol $\phi$ to denote both.
A long, but straightforward computation yields the following
interesting formula relating the linearized MCO associated with $S$
to the Jacobi operator $J_{\Sigma}:= \Delta_{\Sigma} +
|A_{\Sigma}|^2$ on the graph $\Sigma$:
\begin{equation}\label{JacobiandLinearizedMC}
    J_{\Sigma}\left(\frac{\phi}{\sqrt{1+|\nabla S|^2}}\right) =
H'[S](\phi) - \frac{\nabla (H[S]) \cdot \nabla S}{\sqrt{1+|\nabla
S|^2}}\frac{\phi}{\sqrt{1+|\nabla S|^2}}.
\end{equation}
Note that if $\Sigma$ is a minimal graph, i.e. $H[S]=0$, we recover
the well-known relation
\begin{equation}\label{jacoandLinMCmini}
    J_{\Sigma}\left(\frac{\phi}{\sqrt{1+|\nabla S|^2}}\right) =
    H'[S](\phi).
\end{equation}

For our purposes, we would like to estimate the size of the error
term in \eqref{JacobiandLinearizedMC} when $S = F_{\infty}$.

\subsection{Computation of $H[F_{\infty}]$ and $|\nabla (H[F_{\infty}])|$.}
First, we compute $H[F_{\infty}]$. Since
$\text{div}\left(\frac{\nabla F_{\infty}}{|\nabla F_{\infty}|
}\right)=0$,
\begin{align*}
    H[F_{\infty}] & = \text{div}\left({\frac{\nabla F_{\infty}}{\sqrt{1+ |\nabla F_{\infty}|^2}}}\right) =
-\text{div}\left(\frac{\nabla F_{\infty}}{|\nabla
F_{\infty}|}-\frac{\nabla
F_{\infty}}{\sqrt{1+ |\nabla F_{\infty}|^2}}\right) \\
& = -\text{div}\left(\frac{\nabla F_{\infty}}{|\nabla
F_{\infty}|\sqrt{1+|\nabla
F_{\infty}|^2}(|\nabla F_{\infty}|+ \sqrt{1+|\nabla F_{\infty}|^2})}\right) = \\
& = -\frac{\nabla F_{\infty}}{|\nabla F_{\infty}|}\cdot
\nabla\left(\frac{1}{\sqrt{1+|\nabla F_{\infty}|^2}(|\nabla
F_{\infty}|+ \sqrt{1+|\nabla F_{\infty}|^2})}\right) = \frac{\nabla
F_{\infty}}{|\nabla F_{\infty}|}\cdot\frac{\nabla Q}{Q^2},
\end{align*}
where $Q(x') := \sqrt{1+|\nabla F_{\infty}|^2}(|\nabla F_{\infty}|+
\sqrt{1+|\nabla F_{\infty}|^2})$. Note that $Q$ is bounded from
below by
\[
    Q(x') \geq 2 |\nabla F_{\infty}|^2 = 2 r^4(9g(\theta)^2+g'(\theta)^2)\geq 2mr^4,
\]
where $m =
\min_{\theta\in[0,\pi/2]}\big(9g(\theta)^2+g'(\theta)^2\big)
> 0$. Also,
\begin{align*}
    |\nabla |\nabla F_{\infty}||^2 & = (2r \sqrt{9g^2 + g'^2})^2 + \left( r g'\frac{9g' + g''}{\sqrt{9g^2 +
g'^2}}\right)^2 = O(r^2) \\
    |\nabla \sqrt{1+|\nabla F_{\infty}|^2}|^2 & = \left||\nabla F_{\infty}|\frac{\nabla |\nabla F_{\infty}|}{\sqrt{1+|\nabla
F_{\infty}|^2}}\right|^2 =|\nabla F_{\infty}|^2 \frac{|\nabla
|\nabla F_{\infty}||^2}{1+|\nabla F_{\infty}|^2} = O(r^2).
\end{align*}
Thus,
\begin{equation}\label{nablaQ}
    |\nabla Q| = O(r^3)
\end{equation}
and
\begin{equation*}
    |H[F_{\infty}]| \leq \frac{|\nabla Q|}{Q^2} = O(r^{-5}).
\end{equation*}

To compute $|\nabla H[F_{\infty}]|$, observe that

\begin{equation}\label{nablaHprem}
 \nabla (H[F_{\infty}]) = \frac{\nabla (\nabla F_{\infty} \cdot \nabla Q)}{|\nabla
F_{\infty}|Q^2} - H[F_{\infty}] \frac{\nabla|\nabla
F_{\infty}|}{|\nabla F_{\infty}|} - 2 H[F_{\infty}] \frac{\nabla
Q}{Q}.
\end{equation}

The last two summands are obviously $O(r^{-6})$. Let us bound
\begin{align*}
|\nabla (\nabla F_{\infty} \cdot \nabla Q)|^2 = \Big(\del_r\big(
(F_{\infty})_r Q_r &+ r^{-2} (F_{\infty})_{\theta}
Q_{\theta}\big)\Big)^2 \\
& + r^{-2}\Big(\del_{\theta}\big((F_{\infty})_r Q_r + r^{-2}
(F_{\infty})_{\theta} Q_{\theta}\big)\Big)^2.
\end{align*}
Because of \eqref{nablaQ}, $Q_r, r^{-1}Q_{\theta} = O(r^3)$.
Furthermore,
\begin{equation*}
    F_{\infty,rr}, r^{-1}F_{\infty, r\theta}, r^{-2}F_{\infty, \theta\theta} =
O(r),
\end{equation*}
and
\begin{equation*}
    Q_{rr}, r^{-1}Q_{r\theta}, r^{-2}Q_{\theta\theta} =
O(r^2).
\end{equation*}
Thus, $|\nabla (\nabla F_{\infty} \cdot \nabla Q)|^2 = O(r^8)$ and
the first summand in \eqref{nablaHprem} is then
\[
\frac{\nabla (\nabla F_{\infty} \cdot \nabla Q)}{|\nabla
F_{\infty}|Q^2} = O\Big(\frac{r^4}{r^{10}}\Big) = O(r^{-6}),
\]
as well. We conclude
\begin{equation}\label{nablaH}
    |\nabla (H[F_{\infty}])| = O(r^{-6}).
\end{equation}

\subsection{Supersolutions for
$J_{\Gamma_{\infty}}$}\label{PKWsupsinfinity}

In \cite[\S 7.2]{PKW} the authors study the linearized MCO
$H'[F_{\infty}]$ and show that it admits two types of supersolutions
away from the origin. We will call those Type 1 and Type 2 in
parallel with the labels we used in Section \S
\ref{subsec_Supersols}. Because of \eqref{nablaH}, formula
\eqref{JacobiandLinearizedMC} becomes
\begin{equation}\label{JacobiandlinMCOimprov}
    J_{\Gamma_{\infty}}\left(\frac{\phi}{\sqrt{1+|\nabla F_{\infty}|^2}}\right) =
H'[F_{\infty}](\phi) + O\left(\frac{r^{-6}|\phi|}{\sqrt{1+|\nabla
F_{\infty}|^2}}\right)
\end{equation}
so that one can then cook up supersolutions for the Jacobi operator
$J_{\Gamma_{\infty}}$.

\begin{itemize}

\item Type 1 supersolution for the linearized MCO (cf. the proof of Lemma 7.2 in \cite{PKW}): \\
There exists a smooth function $\phi_1 = \phi_1(r,\theta) =
r^{-\eps}q_1(\theta)$ with $q_1(\theta)
>0$ and even about $\theta = \pi/4$ that satisfies the differential
inequality
\begin{equation*}
    H'[F_{\infty}](\phi_1) \leq - \frac{1}{r^{4+\eps}} \qquad r>r_0
\end{equation*}
for sufficiently large $r_0$. Thus, \eqref{JacobiandlinMCOimprov}
implies that $h_1 = \frac{\phi_1}{\sqrt{1+|\nabla F_{\infty}|^2}}
\in C^{\infty}(\Gamma_{\infty})$ satisfies
\begin{equation}\label{diffen1}
    J_{\Gamma_{\infty}}h_1 \leq - \frac{1}{r^{4+\eps}}+ O(r^{-8-\eps}) \leq
-\frac{1}{1+ r^{4+\eps}}
\end{equation}
for sufficiently large $r>r_0$.

\item Type 2 supersolution for the linearized MCO (cf. the proof of Lemma 7.3 in \cite{PKW}):\\
For every $\frac{1}{3} < \tau < \frac{2}{3}$ there exists a function
$\phi_2 = \phi_2(r,\theta) = r q_2(\theta)$, defined for
$\theta\in\{\pi/4 < \theta \leq \pi/2 \}$, such that
\begin{equation*}
    H'[F_{\infty}](\phi_2) \leq - \frac{g(\theta)^{\tau}}{r^{3}}, \quad \theta \in
    (\frac{\pi}{4},\frac{\pi}{2}], \quad r>r_0
\end{equation*}
for sufficiently large $r_0$. Moreover, $q_2(\theta)$ is smooth in
$(\frac{\pi}{4},\frac{\pi}{2}]$ and has the following expansion near
$\theta = \frac{\pi}{4}^+$:
\[
    q_2(\theta) = (\theta - \frac{\pi}{4})^{\tau}(a_0 + a_2(\theta - \frac{\pi}{4})^2+
\cdots) \quad \text{where}\quad a_0>0.
\]
Therefore, $h_2 = \frac{\phi_2}{\sqrt{1+|\nabla F_{\infty}|^2}}$
satisfies the differential inequality
\begin{equation}\label{differen2}
    J_{\Gamma_{\infty}} h_2 \leq - \frac{g(\theta)^{\tau}}{r^{3}} +
O(r^{-7}g(\theta)^{\tau}) \leq - \frac{g(\theta)^{\tau}}{1+ r^{3}}
\end{equation}
in $\theta \in (\frac{\pi}{4},\frac{\pi}{2}]$ for sufficiently large
$r>r_0$.

\end{itemize}

\subsection{Computation of $H_{\infty,3}$} We will compute the
second fundamental form $A$ of the graph $\Gamma_{\infty}$ and then
estimate the sizes of the principal curvatures.

Let $y = (\hat{u} r \cos\theta, \hat{v} r\sin\theta, r^3
g(\theta))\in \Gamma_{\infty}$, with $\hat{u}, \hat{v}\in
S^3\subset\real^4$ and consider local parametrizations
$\tilde{u}(t_1, t_2, t_3)$, and $\tilde{v}(s_1,s_2,s_3)$ of $S^3$
around $\hat{u}$ and $\hat{v}$, respectively, such that
\begin{align*}
   \tilde{u}(0) & = \hat{u} \qquad \del_{t_i}\tilde{u}(0) = \tau_i, \qquad i=1,2,3\\
   \tilde{v}(0) & = \hat{v} \qquad \del_{s_i}\tilde{v}(0) = \sigma_i, \qquad i=1,2,3
\end{align*}
where $\{\tau_i\}, \{\sigma_i\}$ are orthonormal bases for
$T_{\hat{u}}S^3$ and $T_{\hat{v}}S^3$, respectively. Then
\begin{equation}\label{localpara}
    P(r,\theta,t_i,s_i) = (\tilde{u}r \cos\theta, \tilde{v}r \sin\theta, r^3 g(\theta))
\end{equation}
defines a local parametrization of $\Gamma_{\infty}$ near $y$.

In the system of coordinates $\{r,\theta, t_i, s_i\}$ the metric
tensor near $y$ takes the form $\mathsf{g} = \mathsf{g}_2 \oplus
\mathsf{g}_{\text{sym}}$, where
\begin{equation}\label{metrictensor}
        \mathsf{g}_2 = \left(\begin{array}{cc}
1+9r^4g^2 & 3r^5 g g'  \\
3r^5 g g' & r^2(1+g'^2 r^4) \\
\end{array}\right), \mathsf{g}_{\text{sym}} = \left(\begin{array}{cc}
(r^2\cos^2\theta)U_3  &  \\
& (r^2\sin^2\theta)V_3
\end{array}\right).
\end{equation}
In the expression for $\mathsf{g}_{\text{sym}}$ above, $U_3, V_3$
are $3\times 3$ matrices that depend only on $\{t_k\},\{s_k\}$ with
$U_3(0) = V_3(0) = I_3$, the identity $3\times 3$ matrix. We will
also need the inverse of $\mathsf{g}$, $\mathsf{g}^{-1} =
\mathsf{g}_2^{-1} \oplus \mathsf{g}_{\text{sym}}^{-1}$, where
\begin{equation}\label{metrictensorinv}
\begin{aligned}
    \mathsf{g}_2^{-1}(y) &= \frac{1}{\sigma}\left(\begin{array}{cc}
1+r^4g'^2 & -3r^3 g g' \\
-3r^3 g g' & r^{-2}+9g^2 r^2 \\
\end{array}\right) \qquad \sigma:= 1+r^4(9 g^2 + g'^2) \\
\mathsf{g}_{\text{sym}}^{-1}(y) &= r^{-2}\left(\begin{array}{cc}
(\cos\theta)^{-2} I_3 & \\
 & (\sin\theta)^{-2} I_3 \\
\end{array}\right).
\end{aligned}
\end{equation}
The unit normal $\nu(y)$ is given by
\begin{align*}
    \nu(y) & = -\frac{1}{\sqrt{\sigma}}\big((F_{\infty})_u \tilde{u}, (F_{\infty})_v
    \tilde{v}, -1\big) = \\
    & =-\frac{1}{\sqrt{\sigma}}\big(r^2(3g\cos\theta - g'\sin\theta) \tilde{u},
    r^2(3g\cos\theta + g'\sin\theta)
    \tilde{v}, -1\big).
\end{align*}
We calculate the second fundamental form $(A_{\infty})_{ij} =
-\del_i P \cdot \del_j \nu = \del_{ij}P\cdot \nu$ at $y$ to be
$A_{\infty} = (A_{\infty})_2 \oplus (A_{\infty})_{\text{sym}}$,
where
\begin{align*}
(A_{\infty})_2(y) & = \frac{1}{\sqrt{\sigma}}\left(\begin{array}{cc}
6r g & 2r^2g' \\
2r^2g' & r^3(3g + g'')
\end{array}\right)
\\
(A_{\infty})_{\text{sym}}(y) & =
\frac{1}{\sqrt{\sigma}}\left(\begin{array}{cc}
r\cos\theta (F_{\infty})_u  I_3 & \\
 & r\sin\theta(F_{\infty})_v I_3
\end{array}\right).
\end{align*}
The principal curvatures of $\Gamma_{\infty}$ at $y$ are the
eigenvalues of the matrix $A_{\infty}\mathsf{g}^{-1}$,
i.e. the eigenvalues $\mu_1$, $\mu_2$ (each of multiplicity $3$) of
$(A_{\infty})_{\text{sym}}
    (\mathsf{g}_{\text{sym}})^{-1}$:
\begin{align*}
    \mu_1 &= \frac{1}{\sqrt{\sigma}}
    \frac{(F_{\infty})_u}{r\cos\theta} =  \frac{r}{\sqrt{\sigma}}(3g - g'\tan\theta) \\
    \mu_2 &=
    \frac{1}{\sqrt{\sigma}}\frac{(F_{\infty})_v}{r\sin\theta} =\frac{r}{\sqrt{\sigma}}(3g + g'\cot\theta)
\end{align*}
and the eigenvalues $\lambda_1, \lambda_2$ of
\begin{equation*}
    (A_{\infty})_2 (\mathsf{g}_2)^{-1} =
\frac{1}{\sigma^{3/2}}\left(\begin{array}{cc}
O(r(\theta - \frac{\pi}{4})) & *\\
* & O(r^5 (\theta-\frac{\pi}{4}))
\end{array}\right).
\end{equation*}
Since $g'\cot\theta = O(1)$ and $g'\tan\theta = O(1)$, we see that
$\mu_1$ and $\mu_2$ are $O((1+r)^{-1})$. Note further that
\begin{equation}\label{mu1plusmu2}
    \mu_1 + \mu_2 = \frac{r}{\sqrt{\sigma}}(6g +2g'\cot2\theta) =
    O\Big(\frac{\theta-\pi/4}{1+r}\Big).
\end{equation}
On the other hand, $\lambda_{1,2} = O((1+r)^{-1})$ as well, since
\begin{align}
    \lambda_1 + \lambda_2 & = \text{Trace}((A_{\infty})_2
    (\mathsf{g}_2)^{-1}) = O\Big(\frac{\theta-\pi/4}{1+r}\Big) \label{lam1pluslam2}\\
    \lambda_1 \lambda_2 & = \det (A_{\infty})_2 \det(\mathsf{g}_2)^{-1}
    = O(1)\frac{1}{r^2\sigma} = O(r^{-6}).\notag
\end{align}

Now \eqref{mu1plusmu2} and \eqref{lam1pluslam2}, combined with the
fact that the principal curvatures are all of order $O((1+r)^{-1})$
imply that
\begin{equation*}
    H_{\infty,3} = 3 (\mu_1 + \mu_2)(\mu_1^2 - \mu_1\mu_2 + \mu_2^2)
    + (\lambda_1 + \lambda_2)(\lambda_1^2 - \lambda_1\lambda_2 +
    \lambda_2^2) = O\Big(\frac{\theta-\pi/4}{1+r^3}\Big).
\end{equation*}

\section{Bounds on the gradient and hessian of $h(r,\theta)$ on
$\Gamma_{\infty}$.}\label{append2}

Once again we will make use of the $\{r,\theta, t_i, s_i\}$ system
of coordinates \eqref{localpara} in order to estimate the first and
second covariant derivatives of a function $h=h(r,\theta) \in
C^2(\Gamma_{\infty})$ which depends only on $r$ and $\theta$.

\begin{lemma}\label{gradhess} The gradient and hessian of $h = h(r,\theta) \in C^2(\Gamma_{\infty})$
satisfy:
\begin{align}
    |D_{\Gamma_{\infty}} h| & = O\Big(|\del_r h| + (r^{-1}\vartheta +
    r^{-3})|\del_{\theta}h|\Big) \label{gradrtita} \\
    |D^2_{\Gamma_{\infty}}h | & = O\Big(|\del^2_{r} h| + (r^{-1}\vartheta +
    r^{-3}) |\del^2_{r\theta}h| + (r^{-1}\vartheta +
    r^{-3})^2 |\del^2_{\theta}h|\Big) + \notag \\
    & + r^{-1}O\Big(|\del_r h| + (r^{-1}\vartheta +
    r^{-3})|\del_{\theta} h|\Big) + O\Big((r^{-6} + r^{-2}\vartheta^2)\left|\frac{\del_{\theta}h}{g'}\right|\Big)  \label{hessrtita}
\end{align}
where $\vartheta:=|\theta-\pi/4|$ and the constants in the
$O$-notation depend on $g$.
\end{lemma}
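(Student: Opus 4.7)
The proof is a direct computation in the local parametrization $P(r,\theta,t_i,s_i)$ of \eqref{localpara}, whose metric \eqref{metrictensor} decomposes as the block sum $\mathsf{g} = \mathsf{g}_2 \oplus \mathsf{g}_{\mathrm{sym}}$. Since $h$ depends only on $(r,\theta)$, all spherical partials of $h$ vanish and the gradient reduces to
\[
|D_{\Gamma_\infty}h|^2 = (\mathsf{g}_2^{-1})^{rr}(\partial_r h)^2 + 2(\mathsf{g}_2^{-1})^{r\theta}\partial_r h\,\partial_\theta h + (\mathsf{g}_2^{-1})^{\theta\theta}(\partial_\theta h)^2.
\]
Inserting \eqref{metrictensorinv} and using $|g|\lesssim \vartheta$ (from $g(\pi/4)=0$ and boundedness of $g'$) gives $(\mathsf{g}_2^{-1})^{rr}\leq 1$ and $(\mathsf{g}_2^{-1})^{\theta\theta} = (r^{-2}+9r^2 g^2)/\sigma = O(r^{-6}+r^{-2}\vartheta^2)$, while the off-diagonal is bounded by the geometric mean via positive-semidefiniteness of $\mathsf{g}_2^{-1}$. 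Taking square roots yields \eqref{gradrtita} immediately.

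For the Hessian, use $(\nabla^2 h)_{ij} = \partial_i\partial_j h - \Gamma^r_{ij}\partial_r h - \Gamma^\theta_{ij}\partial_\theta h$, since only $r,\theta$ upstairs indices can contribute. The block-diagonal structure of $\mathsf{g}$ together with the fact that $\mathsf{g}_{rr},\mathsf{g}_{r\theta},\mathsf{g}_{\theta\theta}$ do not depend on the spherical variables forces the mixed Christoffels $\Gamma^{r,\theta}_{r t_i}, \Gamma^{r,\theta}_{\theta t_i}$, their $s$-analogues, and the cross symbols $\Gamma^{r,\theta}_{t_i s_j}$ to vanish, so only the $(r,\theta)$-block components of $\nabla^2 h$ and the diagonal sphere components $(\nabla^2 h)_{t_i t_j}$, $(\nabla^2 h)_{s_i s_j}$ survive. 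The remaining Christoffels must then be computed. The key point is that the first-kind symbols $[ij,k]=\tfrac12(\partial_i\mathsf{g}_{jk}+\partial_j\mathsf{g}_{ik}-\partial_k\mathsf{g}_{ij})$ are $O(r^5)$-size polynomials, and one has to combine $(\mathsf{g}_2^{-1})^{rr}[ij,r]+(\mathsf{g}_2^{-1})^{r\theta}[ij,\theta]$ (and its $\theta$-upstairs analogue) symbolically in order to witness the polynomial cancellations. Doing so yields
\[
\Gamma^r_{rr}=\tfrac{18r^3 g^2}{\sigma},\ \Gamma^\theta_{rr}=\tfrac{6r^2 gg'}{\sigma},\ \Gamma^r_{r\theta}=\tfrac{6r^4 gg'}{\sigma},\ \Gamma^\theta_{r\theta}=\tfrac{r^{-1}+3r^3(3g^2+g'^2)}{\sigma},
\]
\[
\Gamma^r_{\theta\theta}=\tfrac{3r^5 gg''-r-r^5 g'^2}{\sigma},\qquad \Gamma^\theta_{\theta\theta}=\tfrac{r^4 g'(3g+g'')}{\sigma},
\]
with the sphere-block Christoffels $\Gamma^{r,\theta}_{t_i t_j}, \Gamma^{r,\theta}_{s_i s_j}$ computed analogously from the easy symbols $[t_i t_j,r]=-r\cos^2\theta\,\delta_{ij}$, $[t_i t_j,\theta]=r^2\cos\theta\sin\theta\,\delta_{ij}$, and their $\sin$-analogues.

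Splitting $|D^2 h|^2$ into an $(r,\theta)$-part (weighted by $\mathsf{g}_2^{-1}$) and a spherical part (weighted by $(\mathsf{g}_{\mathrm{sym}}^{-1})^{t_i t_j}=(r\cos\theta)^{-2}\delta^{ij}$ and its $\sin$-analogue), dividing each Christoffel by $\sigma\sim 1+r^4(9g^2+g'^2)$, and feeding in the $\mathsf{g}_2^{-1}$ estimates from the gradient step, the three lines of \eqref{hessrtita} appear termwise: the $|\partial^2 h|$ line from the pure $\partial_i\partial_j h$ portion, the $r^{-1}O(\cdots)$ line from the sphere-block Christoffels after pairing with the $(r\cos\theta)^{-2}$ and $(r\sin\theta)^{-2}$ weights, and the final $(r^{-6}+r^{-2}\vartheta^2)|\partial_\theta h/g'|$ term from the $\Gamma^\theta_{\theta\theta}$ contribution, whose numerator carries an explicit $g'$ that, after being paired with $((\mathsf{g}_2^{-1})^{\theta\theta})\sim r^{-6}+r^{-2}\vartheta^2$, translates into the $1/g'$ in the final bound; this is the refined form that remains meaningful in the regime where $g'$ may be small. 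The main obstacle is combinatorial rather than conceptual: the naive bounds on $[ij,k]$ overshoot by several powers of $r$, and only after the symbolic cancellations between $(\mathsf{g}_2^{-1})^{rr}[ij,r]$ and $(\mathsf{g}_2^{-1})^{r\theta}[ij,\theta]$ do the Christoffel sizes come out right; once those clean forms are in hand, matching them to \eqref{hessrtita} is direct bookkeeping.
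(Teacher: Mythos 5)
Your overall strategy is the same as the paper's: compute the metric, its inverse, and the Christoffel symbols in the coordinates \eqref{localpara} and estimate the covariant gradient and Hessian directly. The gradient step is correct, your $(r,\theta)$-block Christoffel symbols are right (in fact the paper has a sign typo in $\Gamma^\theta_{\theta\theta}$: one finds $r^4g'(g''+3g)/\sigma$, as you wrote, not $r^4g'(g''-3g)/\sigma$ — though both are $O(\vartheta)$ so nothing downstream changes), and your identification of the nonzero Christoffel blocks is also right. The only structural difference is that you work with the $(0,2)$-Hessian $\nabla_i\nabla_jh$ and contract, whereas the paper works with the mixed $(1,1)$-form $h_i{}^j$ and splits into the pieces $S_2$ and $S_{\text{sym}}$; these are equivalent after contraction.

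There is, however, a genuine error in your termwise bookkeeping at the end, and it concerns the most delicate part of \eqref{hessrtita}. You attribute the $(r^{-6}+r^{-2}\vartheta^2)\lvert\del_\theta h/g'\rvert$ term to $\Gamma^\theta_{\theta\theta}$, reasoning that the factor $g'$ in its numerator, ``paired with $(\mathsf{g}_2^{-1})^{\theta\theta}$,'' produces $1/g'$. That mechanism cannot work: $\Gamma^\theta_{\theta\theta}=r^4g'(3g+g'')/\sigma$ contributes a term proportional to $g'h_\theta$, and $(\mathsf{g}_2^{-1})^{\theta\theta}=(r^{-2}+9g^2r^2)/\sigma$ contains no factor of $1/g'$; the net contribution of $\Gamma^\theta_{\theta\theta}h_\theta$ to the Hessian norm therefore carries $g'$ in the \emph{numerator} and is strictly smaller than the stated bound, so it is absorbed into the other two lines. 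The $1/g'$ factor actually originates from the sphere-block, precisely where $g'$ degenerates: in the paper's computation it appears as $\cot\theta-\tan\theta=O(1/g')$ and $\cot^2\theta+\tan^2\theta=O(1/g'^2)$ in $S_{\mathrm{sym}}$; in your $(0,2)$-Hessian framework it appears after contracting $(\nabla^2h)_{t_it_j}=-\Gamma^r_{t_it_j}h_r-\Gamma^\theta_{t_it_j}h_\theta$ with $(\mathsf{g}^{t_it_j})=\delta^{ij}/(r^2\cos^2\theta)$ — that division by $\cos^2\theta$ (and by $\sin^2\theta$ in the $s$-block) converts the $\cos\theta\sin\theta$ factors in $\Gamma^\theta_{t_it_j}$ into $\tan\theta$ and $\cot\theta$, which blow up like $1/g'$ as $\theta\to 0^+$ or $\theta\to(\pi/2)^-$. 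Once this contribution is combined with the refined estimate $h^\theta=O\bigl(|g'|(|h_r|/r+(r^{-6}+r^{-2}\vartheta^2)|h_\theta/g'|)\bigr)$, the $1/g'$ term in \eqref{hessrtita} emerges. Your sketch would not reach the correct bound as written, because the dominant contribution from the sphere-block Christoffels is missing from your accounting and replaced by a phantom contribution from $\Gamma^\theta_{\theta\theta}$.
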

\begin{proof}

In order to carry out the computations, we adopt the standard
Einstein index notation. That way, we write
\begin{align*}
    |D_{\Gamma_{\infty}} h|^2 = h^i h_i \quad \text{and} \quad  |D^2_{\Gamma_{\infty}} h|^2 = h_i{}^j
    h_j{}^i,
\end{align*}
where $i,j$ range over the list of coordinates  $\{r,\theta,
\{t_k\},\{s_k\}\}$
\[
h_i = \del_i h, \quad h^{i} = \mathsf{g}^{ik}h_k  \quad
\text{with}\quad \mathsf{g}^{ij} = (\mathsf{g}^{-1})_{ij}
\]
and
\begin{equation*}
    h_i{}^j = \del_i h^j + \Gamma^j_{ik} h^k.
\end{equation*}
In the expression above, $\Gamma^j_{ik}$ are, of course, the
Christophel symbols:
\begin{equation}
\Gamma^j_{ik} = \frac{1}{2}\mathsf{g}^{jl}\big(\del_i\mathsf{g}_{lk}
+ \del_k\mathsf{g}_{il} - \del_l\mathsf{g}_{ik}
\big).\label{Christo}
\end{equation}
Since $h = h(r,\theta)$, we have $|D_{\Gamma_{\infty}} h|^2 = h^r
h_r + h^{\theta}h_{\theta}$. Using \eqref{metrictensorinv} we
calculate
\begin{align*}
    h^r & = \frac {1+r^4g'^2}{\sigma} h_r -  \frac{3r^3 g g'}{\sigma} h_{\theta} \\
    h^{\theta} & =  -\frac{3r^3 g g'}{\sigma} h_r +
    \frac{r^{-2}+9g^2 r^2}{\sigma}
    h_{\theta}
\end{align*}
and taking into account that $g(\theta) = O(\vartheta)$ and $9g^2 +
g'^2$ is uniformly bounded from above and from below by positive
constants, we conclude
\begin{equation*}
    |D_{\Gamma_{\infty}}h|^2 = O\Big(|h_r|^2 + r^{-1}\vartheta
    |h_r||h_{\theta}| + (r^{-3}+ r^{-1}\vartheta)^2 |h_{\theta}|^2
    \Big)
\end{equation*}
so that \eqref{gradrtita} is verified.

The computation of the hessian is slightly more involved. For
convenience we will denote by Greek letters $\alpha, \beta, \gamma$,
etc. indices that correspond to coordinates $r,\theta$, and by Latin
$l,m, n$, etc. indices that correspond to coordinates
$\{t_i\},\{s_i\}$. First, note that the ``cross term" contribution
$h_{\alpha}{}^l h_l{}^{\alpha} = 0$, because
\begin{equation*}
    h_l{}^{\alpha} = \del_l h^{\alpha} + \Gamma_{l\beta}^{\alpha}
    h^{\beta} = 0,
\end{equation*}
as $\del_l h^{\alpha} = 0$ and
\begin{equation*}
    \Gamma_{l\beta}^{\alpha} = \frac{1}{2} \mathsf{g}^{\alpha\gamma}\big(\del_l\mathsf{g}_{\gamma\beta}
+ \del_{\beta}\mathsf{g}_{l\gamma} -
\del_{\gamma}\mathsf{g}_{l\beta} \big) = 0.
\end{equation*}
Thus, $|D^2_{\Gamma_{\infty}}h|^2 = S_2 + S_{\text{sym}}$, where
\begin{equation*}
    S_2:= h_{\alpha}{}^{\beta}h_{\beta}{}^{\alpha}, \qquad S_{\text{sym}} := h_l{}^m
h_m{}^l.
\end{equation*}
Let us first deal with $S_{\text{sym}}$. We see that
\begin{equation*}
    S_{\text{sym}} = \Gamma_{l\alpha}^m \Gamma_{m\beta}^l h^{\alpha}
    h^{\beta},
\end{equation*}
where
\begin{equation*}
    \Gamma_{l\alpha}^m = \frac{1}{2}\mathsf{g}^{mm}\big(\del_l \mathsf{g}_{m\alpha} + \del_{\alpha}\mathsf{g}_{lm} -\del_m \mathsf{g}_{l\alpha}
    \big) = \frac{1}{2}\mathsf{g}^{mm} \del_{\alpha}
    \mathsf{g}_{lm}.
\end{equation*}
Noting that
\begin{equation*}
        \del_{\alpha}\mathsf{g}_{\text{sym}}(y) =
\left(\begin{array}{cc}
\del_{\alpha}(r^2\cos^2\theta) I_3 &  \\
&\del_{\alpha}(r^2\sin^2\theta) I_3
\end{array}\right),
\end{equation*}
we obtain
\begin{align*}
    S_{\text{sym}} & = (h^r)^2 \frac{6}{r^2} + 2 h^r h^{\theta} \frac{3}{r}(\cot\theta -
    \tan\theta) + (h^{\theta})^2 3(\cot^2\theta + \tan^2\theta).
\end{align*}
A more refined estimation of $h^{\theta}$ gives
\begin{equation*}
    h^{\theta} = O\Big(|g'|\big(|h_r|/r + (r^{-6} + r^{-2}\vartheta^2) |h_{\theta}/g'|\big)\Big)
\end{equation*}
and since
\begin{equation*}
\cot\theta - \tan\theta = O(1/g') \qquad \cot^2\theta + \tan^2\theta
= O(1/g'^2)
\end{equation*}
we conclude
\begin{equation}
 S_{\text{sym}} = O(|h_r|^2 r^{-2} + r^{-2}\vartheta^2 |h_{\theta}|^2 + (r^{-6} + r^{-2}\vartheta^2)^2
 |h_{\theta}/g'|^2)  \label{Ssym}.
\end{equation}

Now we proceed with the computation of $S_2$. For the purpose we
need to calculate the Christophel symbols
$\Gamma_{\alpha\beta}^{\gamma}$. The derivatives of $\mathsf{g}_2$
are
\begin{equation*}
    \del_r\mathsf{g}_2 = \left(\begin{array}{cc}
36r^3g^2 & 15r^4 g g'  \\
15r^4 g g' & 2r + 6 g'^2 r^5 \\
\end{array}\right)
\qquad
 \del_{\theta}\mathsf{g}_2 = \left(\begin{array}{cc}
18r^4 gg' & 3 r^5 (g'^2 + gg'')  \\
3 r^5 (g'^2 + gg'') & 2 r^6 gg' \\
\end{array}\right)
\end{equation*}
which we then plug in \eqref{Christo} to obtain:
\begin{align*}
\left(\begin{array}{c}
\Gamma_{rr}^r  \\
\Gamma_{rr}^{\theta} \\
\end{array}\right) & = \frac{\mathsf{g}_2^{-1}}{2}\left(\begin{array}{c}
\del_r \mathsf{g}_{rr}  \\
2 \del_r \mathsf{g}_{\theta r} - \del_{\theta} g_{rr}
\end{array}\right)  =
\frac{6}{\sigma} \left(\begin{array}{c}
3r^3 g^2  \\
r^2 gg' \\
\end{array}\right) = \left(\begin{array}{c}
O(r^{-1}\vartheta^2)  \\
O(r^{-2} \vartheta)
\end{array}\right)
 \\
\left(\begin{array}{c}
\Gamma_{\theta r}^r  \\
\Gamma_{\theta r}^{\theta} \\
\end{array}\right) & = \frac{\mathsf{g}_2^{-1}}{2}\left(\begin{array}{c}
\del_{\theta} \mathsf{g}_{rr}  \\
\del_r \mathsf{g}_{\theta\theta}\\
\end{array}\right) =\frac{1}{\sigma} \left(\begin{array}{c}
6r^4 gg'  \\
r^{-1} + 3r^3(g'^2 + 3g^2) \\
\end{array}\right) = \left(\begin{array}{c}
O(\vartheta)  \\
O(r^{-1})
\end{array}\right) \\
\left(\begin{array}{c}
\Gamma_{\theta \theta}^r  \\
\Gamma_{\theta \theta}^{\theta} \\
\end{array}\right) & = \frac{\mathsf{g}_2^{-1}}{2}\left(\begin{array}{c}
2\del_{\theta} \mathsf{g}_{r\theta}-\del_r\mathsf{g}_{\theta\theta}  \\
\del_{\theta} \mathsf{g}_{\theta\theta}\\
\end{array}\right) =\frac{1}{\sigma} \left(\begin{array}{c}
r^5(3gg'' - g'^2)-r  \\
r^4g'(g''-3g) \\
\end{array}\right) = \left(\begin{array}{c}
O(r)  \\
O(\vartheta)
\end{array}\right). \\
\end{align*}
For convenience define the following expressions that measure the
magnitude of the first and second derivatives of $h$:
\begin{align*}
    \mathcal{F}(h_{\alpha}) & := |h_r| + (\vartheta r^{-1} +
    r^{-3})|h_{\theta}| \\
    \mathcal{S}(\del^2_{\alpha\beta}h, h_{\alpha}) & := |\del^2_{r}h| +(\vartheta r^{-1} +
    r^{-3})|\del^2_{\theta r}h|+ (\vartheta r^{-1} +
    r^{-3})^2|\del^2_{\theta}h| + r^{-1}\mathcal{F}(h_{\alpha})
\end{align*}
A straightforward computation yields:
\begin{align*}
    h_{r}{}^r & = \del_r h^r + \Gamma_{rr}^r h^{r} +
    \Gamma_{r\theta}^r h^{\theta} = O\big(\mathcal{S}(\del^2_{\alpha\beta}h,
    h_{\alpha})\big) \\
    h_{\theta}{}^{\theta} & = \del_{\theta} h^{\theta} + \Gamma_{\theta r}^{\theta} h^{r} +
    \Gamma_{\theta\theta}^r h^{\theta} =  O\big(\mathcal{S}(\del^2_{\alpha\beta}h,
    h_{\alpha})\big) \\
    h_r{}^{\theta} & =\del_{r} h^{\theta} + \Gamma_{rr}^{\theta} h^{r} +
    \Gamma_{r\theta}^{\theta} h^{\theta} = \\
    & =     O\big(r^{-7} h_{\theta} +
    r^{-6}\del^2_{r\theta}h + \vartheta r^{-1}\mathcal{S}(\del^2_{\alpha\beta}h,
    h_{\alpha})
    \big) + O(\vartheta r^{-2} \mathcal{F}(h_{\alpha})) = \\
    & = O \big((r^{-3} + \vartheta r^{-1}) \mathcal{S}(\del^2_{\alpha\beta}h,
    h_{\alpha})\big) \\
    h_{\theta}{}^{r} & =\del_{\theta} h^{r} + \Gamma_{\theta r}^{r} h^{r} +
    \Gamma_{\theta \theta}^r h^{\theta} = \\
    & = O\big(\del^2_{r\theta} h + r^{-1}\vartheta \del^2_{\theta}h + \vartheta h_{r} + r^{-1}
    h_{\theta}\big) + O\big(\vartheta h_r + (r^{-5} + \vartheta^2 r^{-1})h_{\theta}
    \big) = \\
    & = O \big((r^{-3} + \vartheta r^{-1})^{-1} \mathcal{S}(\del^2_{\alpha\beta}h,
    h_{\alpha})\big),
\end{align*}
whence we conclude
\begin{equation}\label{Stwo}
    S_2 = O\big(\mathcal{S}^2(\del^2_{\alpha\beta}h,
    h_{\alpha})\big).
\end{equation}
Equations  \eqref{Ssym} and \eqref{Stwo} yield the estimate
\eqref{hessrtita} for the hessian.
\end{proof}

\bibliography{FB_Bib}
\end{document}